\numberwithin{equation}{section}
\newcommand{\ep}{\varepsilon}        
\newcommand{\R}{\mathbb{R}}          
\newcommand{\K}{\mathbb{K}}          
\newcommand{\Nn}{\mathcal{N}}        
\def \J {\mathcal{J}}         
\def \K {\mathcal{K}}         
\def \H {\mathbb{H}}          
\def \w {\mathtt{w}}          
\def \B {\mathtt{B}}          
\def \N {\mathtt{N}}          
\def \T {\mathcal{T}}          
\def \G {\mathtt{G}}          
\providecommand{\pdi}[2]{<#1,#2>} 
\providecommand{\bs}[1]{\boldsymbol{#1} }       
\newcommand{\ssi}{\Leftrightarrow}              
\newcommand{\lap}{\Delta}                       
\newcommand{\dist}{\mathop{\rm dist}}           
\def \div{\mathop{\rm div}}                     
\newtheorem{thm}{Theorem}
\newtheorem{lemma}{Lemma}[section]
\newtheorem{prop}{Proposition}[section]
\newtheorem{coro}{Corollary}[section]
\newtheorem{claim}{Claim}[section]
\newtheorem{rmk}{\it Remark}[section]
\numberwithin{equation}{section}
\numberwithin{equation}{section}
\title{A two end family of solutions for the Inhomogeneous
Allen-Cahn equation in $\R^2$}
\author[O. Agudelo]{Oscar Agudelo}
\address{\noindent O. Agudelo - Departamento de Ingenier\'{\i}a Matem\'atica and
CMM, Universidad de Chile, Casilla 170 Correo 3, Santiago, Chile.}
\email{oagudelo@dim.uchile.cl}
\author[A. Zu\~niga]{Andres Zu\~niga}
\address{\noindent A. Zu\~niga - Departamento de Ingenier\'{\i}a Matem\'atica and
CMM, Universidad de Chile, Casilla 170 Correo 3, Santiago, Chile.}
\email{azuniga@dim.uchile.cl}
\begin{document}
\maketitle

\begin{abstract}
In this work we construct a family of entire bounded solution for
the singulary perturbed Inhomogeneous Allen-Cahn Equation $\ep^2
\div\left(a(x)\nabla u\right) -a(x)F'(u)=0$ in $\R^2$, where $\ep
\to 0$. The nodal set of these solutions is close to a
"nondegenerate" curve which is asymptotically two non paralell
straight lines. Here $F'$ is a double-well potential and $a$ is a
smooth positive function. We also provide example of curves and
functions $a$ where our result applies. This work is in connection
with the results found in \cite{DuLai},~\cite{DuGui10} and \cite{35}
handling the compact case.
\end{abstract}


\section{Introduction}
In this paper, we consider the singularly perturbed Inhomogeneous
Allen-Cahn equation
\begin{equation}
\ep^2\div(a(x)\nabla u(x))- a(x)F'(u)=0,\quad \hbox{in
}\R^2\label{IntrodEq0}
\end{equation}

To fix ideas we can think of equation \eqref{IntrodEq0} as the
\begin{equation}
\ep^2\Delta_{g} u(x)- F'(u)=0,\quad \hbox{in }M\label{IntrodEq1}
\end{equation}
where $(M,g)$ is the whole plane endowed with a planar metric
induced by the potential $a$. In this setting $\Delta_{g}$ denotes
the Laplace Beltrami operator on $M$ and the function $F:\R\to\R$ is
a smooth double-well potential, i.e a function satisfying
\begin{align}
&F(s)>0,\quad  \forall s\neq \pm1,\label{IntrodEq2}\\ 
&F(\pm1)=0, \label{IntrodEq3}\\
&\sigma^2_{\pm}:=F''(\pm 1)>0.\label{IntrodEq3.2}
\end{align}

The typical example for $F$ corresponds to the balanced and
bi-stable twin-pit
\begin{equation}
F(u)=\frac{1}{4}(1-u^2)^2, \quad -F'(u)=u-u^3\notag
\label{IntrodEq4}
\end{equation}
and $\sigma_{\pm}=\sqrt{2}$.

\medskip
Equation \eqref{IntrodEq1} arises in the {\it Gradient Theory of
Phase Transitions} ~\cite{29}. In this physical model, there are two
different states of a material represented by the values $u=\pm 1$.
It is of interest to study nontrivial configurations in which these
two states try to coexist. Hence, the function $u$ represents a
smooth realization of the phase, which except for a narrow region,
is expected to take values close to $\pm 1$, namely the global
minima of $F$.

\medskip
Let us consider the energy functional
\begin{equation}
J_{\ep}(u)=\int_{M}\left[\frac{\ep}{2}|\nabla u|^2+\frac{1}{\ep}
F(u)\right]dV_{g}.\label{IntrodEq6}
\end{equation}

We are interested in critical points of~\eqref{IntrodEq6}, which
correspond to solutions of~\eqref{IntrodEq1}. For any set
$\Lambda\subset M$ the function
\begin{equation}
u^{*}_{\Lambda}:=\chi_{\Lambda}-\chi_{M\setminus
\Lambda},\quad\text{ in }\quad M\label{IntrodEq7}
\end{equation}

minimizes the second term in~\eqref{IntrodEq6}, however, it is not a
smooth solution. By considering an $\ep-$regularization of
$u^{*}_{\Lambda}$, say $u_{\Lambda,\ep}$, it can be checked that
\begin{equation}
J_{\ep}(u_{\Lambda,\ep})\approx \int_{\partial
\Lambda}1dS_{g}\label{IntrodEq8}
\end{equation}

for $\ep>0$ small, where $dS_{g}$ denotes the area element in
$\partial \Lambda$. Relation~\eqref{IntrodEq8} entails that,
transitions varying from $-1$ to $+1$ must be selected, for
instance, in such way that $\partial \Lambda$ minimizes the
perimeter functional
\begin{equation}
\int_{\partial \Lambda}1dS_{g}\label{IntrodEq9}
\end{equation}

In the case that $\partial\Lambda$ is also a smooth submanifold of
$M$, we say that $\partial\Lambda$ is a \emph{minimal submanifold}
of $M$ if $\partial\Lambda$ is critical for the area
functional~\eqref{IntrodEq9}. In particular, it is easy to see that
minimizing submanifolds are critical for~\eqref{IntrodEq9}, and
therefore they are minimal submanifolds of $M$.

\medskip
The intuition behind the previous remarks, was first observed by
Modica in \cite{36}, based upon the fact that when $\partial\Lambda$
is a smooth submanifold of $M$, then transitions varying from $-1$
to $+1$ take place along the normal direction of $\partial\Lambda$
in $M$, having a $1-$dimensional profile in this direction. This profile
corresponds to a function $w$, which is the heteroclinic solution to
the ODE
\begin{equation*}
\left\{
\begin{aligned}
&w''(t)-F'(w(t))=0,\quad\text{ in }\quad\R\\
&w(\pm\infty)=\pm1
\end{aligned}
\right.
\end{equation*}
connecting the two states. The existence of $w$ is ensured by
conditions~\eqref{IntrodEq2}-\eqref{IntrodEq3}-\eqref{IntrodEq3.2} of
$F$. As for the twin pit nonlinearity, we have that
$$
w(t)=\tanh\left(\frac{t}{\sqrt{2}}\right),\quad t\in\R.
$$

This intuition gave a great impulse to the Calculus of Variations,
and the theory of the $\Gamma-$Convergence in the 70th's. Modica
stated that if $M=\Omega\subset\R^N$ is a smooth Euclidean domain
and $\{u_{\ep}\}_{\ep>0}$ is a family of local minimizers
of~\eqref{IntrodEq6} with uniformly bounded energy, then up to
subsequence, $u_{\ep}$ converges in a $L^1_{loc}(\Omega)$-sense to
some limit $u^{*}_{\Lambda}$ of the form~\eqref{IntrodEq7}.

\medskip
There are a number of important works regarding existence and
asymptotic behavior of solutions to~\eqref{IntrodEq1}, under a
variety of different settings. In \cite{35} Pacard and Ritore studied
equation~\eqref{IntrodEq1} in the case that $(M,g)$ is a compact
Riemannian Manifold, they construct a family of solutions
$\{u_{\ep}\}_{\ep>0}$ to~\eqref{IntrodEq1} having transition from
$-1$ to $+1$ on a region $\ep-$close to a compact minimal
submanifold $N$, with positive Ricci-curvature
$$
k_N:=|A_N|^2+Ric(\nu_N,\nu_N)>0.
$$

Under the same conditions del Pino, Kowalczyk, Wei and Yang
constructed in \cite{37} a sequence of solutions with multiple
clustered layers collapsing onto $N$. A gap condition is needed,
related with the interaction between interfaces.

There are related other results under a similar setting for $M$ and
$N$, regarding the equation
$$ \ep^2\Delta_{g}u-V(z)F'(u)=0,\quad \text{ in }\quad M$$
done by B.Lai and Z.Du in \cite{DuLai} where a family of solutions
with a single transition is constructed. Additionally L.Wang and
Z.Du  dealt in~\cite{DuWang12} with the same pro\-blem, considering
multiple transitions this time. In both works the minimal character
and nondegeneracy properties of $N$, are with respect to the
weighted area functional $\int_{M}V^{1/2}$. In the same line,
it is worth to mention a recent work due to Z.Du and C.Gui~\cite{DuGui10} where
they build a smooth solution to the Neumann problem
$$
\ep^2\Delta u-V(z)F'(u)=0\;\text{ in }\;
\Omega,\quad
\frac{\partial u}{\partial n}=0\;\text{ on }\;
\partial\Omega
$$
having a single transition near a smooth closed curve
$\Gamma\subset\Omega$, nondegenerate geodesic relative to the
arclength $\int_{\Gamma}V^{1/2}$. Here, $\Omega$ is a smooth bounded
domain in $\R^2$, and $V$ is an uniformly positive smooth potential.

As for the noncompact case, recently in \cite{9} the authors
considered equation~\eqref{IntrodEq1} when $M=\R^3$. Here the
authors build for any small $\ep>0$ a family of solutions with
transitions close to a non-degenerate complete embedded minimal
surface with finite total curvature.



\medskip
\subsection{The Main Result.} We consider equation
\eqref{IntrodEq1} in a slightly general form, but we restrict
ourselves to dimension $N=2$. More precisely, let us consider the
equation
\begin{equation}
\ep^2\div(a(x)\nabla u(x))- a(x)F'(u)=0,\quad \hbox{in
}\R^2\label{IntrodEq12}
\end{equation}
where the function $a(x)$ is a smooth positive function.

\medskip
As far as our knowledge goes, little is known about entire solutions
to~\eqref{IntrodEq12} having a single transition close to a
noncompact curve, in the case that $a(x)$ is not identically
constant.

In this work, we will consider a smooth noncompact curve
$\Gamma=\gamma(\R)$, where $\gamma:\R\to\Gamma\subset\R^2$ is
parameterized by arc-length. We denote by $\nu:\Gamma\to\R^2$ the
choice of the normal vector to $\Gamma$, so that the curve is
positively oriented.

\medskip
In order to state the main result, let us first list our set of
assumptions on the function $a(x)$ and the curve $\Gamma$.

As for the function $a:\R^2\to \R$, we assume that
\begin{equation}\label{smoothnessfora}
a\in C^5(\R^2),\quad 0<m < a(x)\leq M
\end{equation}
for some positive constants $m,M$. We also assume that the curvature
$k(s)$ of $\Gamma$ in arch-length variable satisfies
\begin{equation}
|k(s)|+|k'(s)|+|k''(s)|\leq
\frac{C}{(1+|s|)^{1+\alpha}}\label{IntrodEq15}
\end{equation}
for some $\alpha>0$. In particular, condition~\eqref{IntrodEq15}
implies that
$$
\dot{\gamma}_{\pm}:=\lim_{s\to\pm \infty}\dot{\gamma}(s)
$$
are well defined directions in $\R^2$. We must assume further some
non-parallelism condition, namely
\begin{equation}\label{nonparallelism}
-1\leq \dot{\gamma}_{+} \cdot \dot{\gamma}_{-}<1.
\end{equation}

Points $x\in\R^2$ that are $\delta-$close to this curve, with
$\delta$ small, can be represented using Fermi coordinates as
follows
$$
x=\gamma(s)+z\nu(s)=:X(s,z),\quad |z|<\delta,\quad s\in\R.
$$

Condition \eqref{nonparallelism} implies that the mapping
$x=X(s,z)$
provides local coordinates in a region of the form
$$
\mathcal{N}_{\delta}:=\left\{x=X(s,z): |z|<\delta+c_0|s|\right\}
$$
for some fixed constant $c_0$ and the mapping $x\mapsto (s,z)$
defines a local diffeomorphism.

\medskip
Finally, abusing notation, by setting $a(s,z)=a(X(s,z))$, we assume
that
\begin{equation}
|\nabla_{s,z}a(s,z)|\leq \frac{C}{(1+|s|)^{1+\alpha}},\quad
|D^2_{s,z}\,a(s,z)|\leq
\frac{C}{(1+|s|)^{2+\alpha}}\label{IntrodEq14}
\end{equation}
where $\alpha>0$ is as in \eqref{IntrodEq15}.

\medskip
Any smooth curve $\delta-$close to $\Gamma$ in a $C^{m}-$topology
can be parameterized by
$$ \gamma_{h}(s)=\gamma(s)+h(s)\nu(s)$$
where $h$ is a small $C^{m}-$function. The weighted length of
$\Gamma_{h}$, where $\gamma_h:\R\to \Gamma_h$ is given by
\begin{align*}
l_{\Gamma}(h)&:=\int_{\Gamma_{h}}a(x)d\vec{r}=\int^{+\infty}_{-\infty}a\left(\gamma_{h}(s)\right)|\dot{\gamma}_{h}(s)|ds\\
&=\int^{+\infty}_{-\infty}a(s,h(s))|\dot{\gamma}+h\dot{\nu}+h'\nu|ds.
\end{align*}

Since $|\dot{\gamma}|=1$ and $\dot{\nu}(s)=k(s)\dot{\gamma}(s)$,
where $k$ is the signed curvature of $\Gamma$, we find that
$$
l_{h}(h)=\int^{+\infty}_{-\infty}a(s,h(s))[(1+kh)^2+|h'|^2]^{1/2}ds.
$$

We say that $\Gamma$ is a stationary (or a critical) curve respect
to the function $a(x)$, if and only if,
\begin{align*}
l'_{\Gamma}[h]&=\int_{\Gamma_{h}}(\partial_{z}a(s,0)-a(s,0)k(s))h(s)ds
=0,\quad \forall h\in C^{\infty}_c(\R)
\end{align*}
which is equivalent to say that the curve $\Gamma$ satisfies
\begin{equation}
\partial_z a(s,0)=k(s)a(s,0),\quad s\in\R.\label{IntrodEq13}
\end{equation}

Regarding the stability properties of the stationary curve $\Gamma$
and the second va\-riation of the length functional $l_{\Gamma}$,
$$
l''_{\Gamma}(h,h)=\int^{+\infty}_{-\infty}\big(\, a(s,0)|h'(s)|^2
+[\partial_{zz}a(s,0)-2k^2(s)]h^2(s)\,\big)ds
$$
we have the Jacobi operator of $\Gamma$, corresponding to the linear
differential operator
\begin{equation}
\J_{a,\Gamma}(h)=h''(s)+\frac{\partial_{s}a(s,0)}{a(s,0)}h'(s)
-\left[\partial_{zz}a(s,0)-2k^2(s)\right]h(s).\
\label{IntrodEqExtra}
\end{equation}

We say that the stationary curve $\Gamma$ is also nondegenerate
respect to the potential $a(x)$, if the bounded kernel of
$\J_{a,\Gamma}$ is the trivial one. The nondegeneracy condition
basically implies that $\J_{a,\Gamma}$ has an appropriate right
inverse,  so that the curve is isolated in a properly chosen
topology.

\medskip
Next, we proceed to state the main result.

\begin{thm}\label{TheoremMainResult}
Assume that $a(x)$ is a smooth potential and let $\Gamma$ be a
smooth stationary and nondegenerate curve respect to the length
functional  $\int_{\Gamma} a(x)d \vec{r}$. Assume also that
conditions \eqref{smoothnessfora}-\eqref{IntrodEq14} are satisfied.
Then for any $\ep>0$ small enough, there exists a smooth bounded
solution $u_{\ep}$ to the inhomogeneous Allen-Cahn
equation~\eqref{IntrodEq12}, such that
\begin{align*}
u_{\ep}(x)=w\left(\frac{z-h(s)}{\ep}\right)+\mathit{O}(\ep^2), \quad\text{
for }\quad x=X(s,z), \quad |z|<\delta
\end{align*}
where the function $h$ satisfies
\begin{equation*}
\|h\|_{C^1(\R)}\leq C\ep\label{ThmEq2}.
\end{equation*}

This solution converges to $\pm1$, away from $\Gamma$, namely
\begin{align*}
u^2_{\ep}(x)\to \pm 1, \quad dist(\Gamma,x) \to
\infty\label{ThmEq3}.
\end{align*}
\end{thm}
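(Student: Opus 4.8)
The plan is to use the infinite-dimensional Lyapunov–Schmidt reduction, following the by-now-standard gluing scheme developed for the Allen–Cahn equation near non-degenerate interfaces. First I would set up the geometry: fix the Fermi coordinates $x=X(s,z)=\gamma(s)+z\nu(s)$ in the region $\Nn_\delta$ and write the operator $S_\ep(u):=\ep^2\div(a(x)\nabla u)-a(x)F'(u)$ in these coordinates. Because the metric and the weight $a$ both enter, the conjugated operator will have the schematic form $\ep^2 a(s,z)\big(\partial_{zz}+\frac{1}{1+z k(s)}\partial_s(\cdots)\big)u - a(s,z)F'(u)$ plus lower-order terms carrying factors of $k$, $k'$ and the derivatives of $a$, all of which decay like $(1+|s|)^{-1-\alpha}$ by \eqref{IntrodEq15}–\eqref{IntrodEq14}. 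I would then introduce the stretched variable $t=\frac{z-h(s)}{\ep}$ with $h$ a parameter function to be found, and take as global approximation $w_\ep(x)=w(t)\eta(z)+(1-\eta)(\pm 1)$, with $\eta$ a cutoff supported in $|z|<\delta$, interpolating to the constants $\pm 1$ outside. A careful expansion of $S_\ep(w_\ep)$ gives the \emph{error}: its size is $O(\ep^2)$ in a suitable weighted $L^\infty$ norm, the $O(\ep)$ term being the projection involving $\J_{a,\Gamma}h$ and the geometry/weight coefficients, while the purely $\ep$-order piece that is not killed by the choice of $h$ is orthogonal to $w'$.

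Next I would solve the nonlinear problem in two stages. Stage one (the \emph{gluing}/outer part): away from $\Gamma$ the linearized operator $\ep^2\div(a\nabla\cdot)-aF''(\pm1)$ is invertible with exponentially good bounds since $F''(\pm1)=\sigma_\pm^2>0$, so one reduces to a problem concentrated in $\Nn_\delta$. Stage two (the \emph{inner}, infinite-dimensional reduction): I look for $u=w_\ep+\phi$ with the orthogonality condition $\int_\R \phi(s,\ep t+h(s))\,w'(t)\,dt=0$ for all $s$. The key linear estimate is an a priori bound for the linearized operator $L_\ep(\phi)=\ep^2\div(a\nabla\phi)-a F''(w_\ep)\phi$ acting on functions satisfying this orthogonality, in weighted norms of the type $\|\phi\|_*=\sup e^{\sigma|t|}(1+|s|)^{\mu}(|\phi|+\ep|\nabla\phi|)$; this follows from the non-degeneracy of $w$ as a solution of the one-dimensional heteroclinic equation (the only bounded kernel element of $w''-F''(w)\cdot$ is $w'$) plus a barrier/blow-up argument in the $s$ direction using the decay \eqref{IntrodEq15}. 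With this estimate and the contraction mapping theorem applied to $L_\ep \phi = -S_\ep(w_\ep) - N_\ep(\phi) + c(s) a F''(w_\ep) w'$, where $N_\ep$ collects the quadratic-and-higher terms and $c(s)$ is the Lagrange multiplier, I obtain a unique small $\phi=\phi(h)$ with $\|\phi(h)\|_*\le C\ep^2$, depending smoothly (Lipschitz) on the parameter $h$.

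The final step is the \emph{reduced equation}: imposing $c(s)\equiv 0$, i.e.\ that the full error be genuinely orthogonal to $w'$, is equivalent to an equation of the form
\begin{equation*}
\J_{a,\Gamma}(h)(s)=\ep\,\mathcal{R}_\ep(h)(s),\qquad s\in\R,
\end{equation*}
where $\mathcal{R}_\ep$ is a small, Lipschitz operator (a remainder built from $\phi(h)$, the curvature terms and the Taylor coefficients of $a$), bounded and Lipschitz in the weighted $C^1(\R)$-type norm with decay $(1+|s|)^{-1-\alpha}$. Here the hypothesis that $\Gamma$ is \emph{nondegenerate} with respect to $\int_\Gamma a\,d\vec r$ is exactly what is needed: the Jacobi operator $\J_{a,\Gamma}$ of \eqref{IntrodEqExtra} has trivial bounded kernel, and together with the decay of its coefficients this yields a bounded right inverse between the appropriate weighted spaces. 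One then solves for $h$ by another fixed-point argument, getting $\|h\|_{C^1(\R)}\le C\ep$. Undoing the reduction produces the solution $u_\ep=w((z-h(s))/\ep)+O(\ep^2)$ in $\Nn_\delta$, and the outer gluing gives $u_\ep^2\to 1$ as $\dist(x,\Gamma)\to\infty$.

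I expect the main obstacle to be the linear theory on the \emph{noncompact} curve: establishing the a priori estimate for $L_\ep$ on orthogonal functions, and the invertibility of $\J_{a,\Gamma}$, uniformly in $\ep$ and with the right weights at $s\to\pm\infty$. In the compact case of \cite{35},\cite{DuGui10},\cite{DuLai} this is handled by a spectral-gap/Fredholm argument, but here one must instead exploit the non-parallelism condition \eqref{nonparallelism} (which guarantees the Fermi coordinates cover the widening region $\Nn_\delta$) and the polynomial decay \eqref{IntrodEq15},\eqref{IntrodEq14} of the curvature and the weight derivatives to run a blow-up/barrier argument that rules out bounded kernel and controls the resolvent. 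Managing the interaction between the two asymptotic straight lines — i.e.\ checking that no extra bounded Jacobi fields are created at infinity and that the weighted norms are consistent with the geometry — is the delicate point of the construction.
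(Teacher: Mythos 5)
Your proposal follows essentially the same route as the paper: writing the operator in Fermi coordinates, building a local approximation of the form $w((z-h(s))/\ep)$ glued to $\pm1$ outside a tubular neighborhood, solving the outer and orthogonally-projected inner problems by contraction in weighted H\"older norms, and finally reducing to the nonlinear Jacobi equation $\J_{a,\Gamma}[h]=\G(h)$ which is solved by a fixed-point argument using the nondegeneracy of $\Gamma$ to invert $\J_{a,\Gamma}$. The technical ingredients you identify as the crux—the weighted linear theory for the Jacobi operator on the noncompact curve, driven by the polynomial decay \eqref{IntrodEq15}--\eqref{IntrodEq14} and the non-parallelism condition \eqref{nonparallelism}—are precisely what the paper develops in Section~\ref{section3} (Lemmas~\ref{LemmaDecayDerivativeJacobi}, \ref{LemmaBehaviourKernel}, Propositions~\ref{PropConstructionKernel}, \ref{InvertibilityJacobiOperator}) and then assembles into the reduction in the later sections.
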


\begin{rmk} Throughout the proof of this theorem, we obtain an explicit
description for $u_{\ep}$ and its derivatives. We apply infinite
dimensional reduction method in the spirit of the pioneering work
due to Floer and Weinstein \cite{10}.
\end{rmk}
\medskip

The paper is organized as follows. Section 2 deals with the
geometrical setting need to set up the proof of theorem
\ref{TheoremMainResult}. In section 3 we present the invertibility
theory for the Jacobi operator of the curve $\Gamma$ while in
section 4 we give some examples of the function $a(x)$ and the curve
$\Gamma$, for which our result applies.  Section 5 is devoted to
find a good approximation of a solution to \eqref{IntrodEq12}.
Next,we sketch the proof of theorem \ref{TheoremMainResult} in
section 6, while leaving complete details for subsequent sections.

\section{Geometrical background}\label{chap:Two}

In this section we write the differential operator
\begin{equation}\label{diffOper1}
\ep^2\lap_{\bar{x}}u +\ep^2\dfrac{\nabla_{\bar{x}} a}{a}\cdot
\nabla_{\bar{x}} u
\end{equation}
involved in equation \eqref{IntrodEq12}, in some appropriate
coordinate system.

\medskip
First, observe that the obvious scaling $\bar{x}=\ep x$ and setting
$v(x):=u(\ep x)$, transforms \eqref{diffOper1} into
\begin{equation}
\lap_{x} v+\ep\dfrac{\nabla_{\bar{x}} a}{a}\cdot \nabla_{x}
v\label{EqAllenCahn}
\end{equation}

Let us consider a large dilation of the curve $\Gamma$, namely
$\Gamma_{\ep}:=\ep^{-1}\Gamma$, for  $\ep>0$ small. Next, we
introduce local translated Fermi coordinates near $\Gamma_{\ep}$
\begin{eqnarray*}
 X_{\ep,h}(s,t)&=&X_{\ep}(s,t+h(\ep s))\nonumber\\
&=& \frac{1}{\ep}\gamma(\ep s)\,+\,(t\,+\,h(\ep s))\,\nu(\ep s)
\end{eqnarray*}
where $h\in C^2(\R)$ is a bounded smooth function. From assumption
\eqref{nonparallelism}, we see that the mapping $X_{\ep,h}(s,t)$
gives local coordinates in the region
\begin{equation*}
\Nn_{\ep,h}=\left\{x=X_{\ep,h}(s,t)\in \R^2:  |t+h(\ep
s)|<\frac{\delta}{\ep}+c_0|s|\right\}
\label{DilatedTranslatedNeighborhood}
\end{equation*}
which is a dilated tubular neighborhood around $\Gamma_\ep$
translated in $h$.

\medskip
Now, we give an expression for the euclidean laplacian in terms of
the coordinates $X_{\ep,h}$. A detailed proof of this fact can be
found in the Appendix.

\begin{lemma}\label{LemmaEuclideanLaplacian}
On the open neighborhood $\Nn_{\ep,h}$ of $\Gamma_{\ep}$, the
euclidean laplacian has the following expression when is computed in
the coordinate $x=X_{\ep,h}(s,t)$, which reads as
\begin{eqnarray}
\Delta_{X_{\ep,h}}&=&\partial_{tt}+\partial_{ss}-2\ep h'(\ep
s)\partial_{st}
-\ep^2 h''(\ep s)\partial_{t}+\ep^2 |h'(\ep s)|^2\partial_{tt}\notag\\[0.25cm]
&&-\,\ep[k(\ep s)+\ep(t+h(\ep s))k^2(\ep s)]\partial_{t}+D_{\ep,h}(s,t)\label{LaplacianXeph}
\end{eqnarray}
where
    \begin{align*}
    D_{\ep,h}(s,t)&:=\ep(t+h)A_0(\ep s,\ep(t+h))[\partial_{ss}-2\ep h'(\ep s)\partial_{ts}-\ep^2h''(\ep s)
    \partial_t+\ep^2|h'(\ep s)|^2\partial_{tt}]\\
    &+\ep^2(t+h)B_0(\ep s,\ep(t+h))[\partial_{s}-\ep h'(\ep s)\partial_t]\\
    &+\ep^3(t+h)^2C_0(\ep s,\ep(t+h))\partial_{t}
    \end{align*}
for which
    \begin{align}
    A_0(\ep s,\ep [t+h(\ep s)])&=2k(\ep s)+\ep \mathit{O}(|[t+h(\ep s)]k^2(\ep s)|)\label{ErrorA0LaplacianXeph}\\
    B_0(\ep s,\ep [t+h(\ep s)])&=\dot{k}(\ep s)+\ep \mathit{O}(|(t+h(\ep s))\dot{k}(\ep s)\cdot k^2(\ep s)|)\label{ErrorB0LaplacianXeph}\\
    C_0(\ep s,\ep [t+h(\ep s)])&=k^3(\ep s)+\ep \mathit{O}(|(t+h(\ep s))k^4(\ep s)|) \label{ErrorC0LaplacianXeph}
    \end{align}
are smooth functions and these relations can be differentiated.
\end{lemma}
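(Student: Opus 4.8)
The statement is a coordinate computation, and I only outline the steps; the full bookkeeping is what the Appendix carries out. By \eqref{nonparallelism} the map $(s,t)\mapsto x=X_{\ep,h}(s,t)$ is a local diffeomorphism on $\Nn_{\ep,h}$, so it induces there the Riemannian metric $g$ obtained by pulling back the Euclidean metric $\delta$; since the Laplacian is invariant under changes of coordinates, the operator ``$\Delta_{X_{\ep,h}}$'' of the Lemma, acting on functions $u=u(s,t)$, is nothing but the Laplace--Beltrami operator of $g$,
\begin{equation*}
\Delta_{X_{\ep,h}}=\Delta_{g}=\frac{1}{\sqrt{\det g}}\;\partial_i\left(\sqrt{\det g}\;g^{ij}\partial_j\right),\qquad i,j\in\{s,t\}.
\end{equation*}
So it suffices to compute $g$, $\det g$ and $g^{ij}$ and to substitute.

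Using that $\gamma$ has unit speed and the Frenet relations for $\gamma$, a direct differentiation of $X_{\ep,h}(s,t)=\ep^{-1}\gamma(\ep s)+(t+h(\ep s))\nu(\ep s)$ gives $\partial_t X_{\ep,h}=\nu(\ep s)$ and $\partial_s X_{\ep,h}=\rho(s,t)\,\dot\gamma(\ep s)+\ep h'(\ep s)\,\nu(\ep s)$, where $\rho(s,t):=1-\ep(t+h(\ep s))k(\ep s)$. Since $\{\dot\gamma(\ep s),\nu(\ep s)\}$ is an orthonormal frame of $\R^2$, in the basis $(\partial_s,\partial_t)$ one obtains
\begin{equation*}
g=\begin{pmatrix}\rho^{2}+\ep^{2}|h'|^{2}&\ep h'\\ \ep h'&1\end{pmatrix},\qquad \det g=\rho^{2},\qquad g^{-1}=\frac{1}{\rho^{2}}\begin{pmatrix}1&-\ep h'\\ -\ep h'&\rho^{2}+\ep^{2}|h'|^{2}\end{pmatrix}.
\end{equation*}
By \eqref{IntrodEq15} and the very shape of $\Nn_{\ep,h}$, the quantity $\ep(t+h(\ep s))k(\ep s)$ is uniformly small on $\Nn_{\ep,h}$ for $\ep$ and $\delta$ small, so $\rho$ is uniformly bounded away from $0$ there; this is what makes the geometric-series expansions below legitimate, together with all their derivatives in $(s,t)$ and in $h$.

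Substituting into the formula for $\Delta_{g}$ and expanding by Leibniz, the only two elementary facts one uses are: (i) $\partial_s$ applied to any function of the variable $\ep s$ produces a factor $\ep$, hence $\partial_s\rho$ and $\partial_s(\ep h'(\ep s))$ are of order $\ep^{2}$; and (ii) $\partial_t\rho=-\ep k(\ep s)$, which is precisely what produces the normal drift. One is then left with an expression whose coefficients are rational in $\rho$; expanding $\rho^{-1}=1+\ep(t+h)k+\ep^{2}(t+h)^{2}k^{2}+\cdots$ (and $\rho^{-2}$ likewise) and separating in each coefficient the $\ep^{0}$-term and the single leading $\ep$-term from the rest, one recovers exactly the explicit part of \eqref{LaplacianXeph}: the $\ep^{0}$-part is $\partial_{ss}+\partial_{tt}$; the cross term $-2\ep h'\partial_{st}$, the term $\ep^{2}|h'|^{2}\partial_{tt}$ and the term $-\ep^{2}h''\partial_t$ come respectively from $g^{st}$, from $g^{tt}$ and from $\rho^{-1}\partial_s(\ep h'/\rho)$; and the drift $-\ep[k+\ep(t+h)k^{2}]\partial_t$ comes from $\rho^{-1}\partial_t\rho=-\ep k\rho^{-1}$. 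All the remaining terms carry a prefactor $\ep(t+h)$, $\ep^{2}(t+h)$ or $\ep^{3}(t+h)^{2}$ times a bounded smooth function of $(\ep s,\ep(t+h))$ (one also checks a cancellation among the order-$\ep^{3}$ terms with no $(t+h)$ factor); grouping them by which operator they multiply produces the three blocks of $D_{\ep,h}(s,t)$, and the first terms of the geometric series give the leading behaviour $A_0=2k+\ep\,\mathit{O}(\cdot)$, $B_0=\dot k+\ep\,\mathit{O}(\cdot)$, $C_0=k^{3}+\ep\,\mathit{O}(\cdot)$ in \eqref{ErrorA0LaplacianXeph}--\eqref{ErrorC0LaplacianXeph}. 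Since every coefficient is an explicit rational function of $k(\ep s),\dot k(\ep s),\ddot k(\ep s)$ and $h(\ep s),h'(\ep s),h''(\ep s)$ with denominator a power of $\rho$, all the identities and all the $\mathit{O}(\cdot)$'s may be differentiated, as claimed.

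The conceptual content is small; the work, and the main source of error, is entirely in the last step: verifying that after the geometric-series expansion every term not displayed in \eqref{LaplacianXeph} genuinely carries one of the advertised small prefactors (so that $D_{\ep,h}$ is a true perturbation), and getting the sign and the $\ep$-coefficient of the normal-drift term --- which comes from the $t$-dependence of $\rho$, i.e. from $\partial_t\rho=-\ep k$ --- exactly equal to $-\ep[k+\ep(t+h)k^{2}]$. One must also make sure, using \eqref{IntrodEq15} and the precise definition of $\Nn_{\ep,h}$, that $\rho$ stays uniformly positive on $\Nn_{\ep,h}$, which is what justifies the expansion in the first place.
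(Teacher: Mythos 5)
Your proof is correct and takes a genuinely different route from the paper's. The paper (Appendix, subsection~\ref{sec:LaplacianoCoordFermiDilatadasTrasladadas}) proceeds in three stages: first it computes the Laplace--Beltrami operator for the plain Fermi coordinates $X(\bs{s},\bs{t})=\gamma(\bs{s})+\bs{t}\nu(\bs{s})$, whose induced metric is \emph{diagonal} ($g_{ss}=(1-\bs{t}k)^2$, $g_{st}=0$, $g_{tt}=1$), arriving at \eqref{LaplacianoCoordFermiEq14}; it then rescales to $X_\ep$; and finally it performs the affine substitution $z=t\mp h(\ep s)$ by chain rule. You instead differentiate $X_{\ep,h}$ directly, which produces the \emph{non-diagonal} metric $g_{ss}=\rho^2+\ep^2|h'|^2$, $g_{st}=\ep h'$, $g_{tt}=1$ (still with $\det g=\rho^2$), and then apply the Laplace--Beltrami formula once. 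The trade-off is that the paper's intermediate metric is diagonal (so the Laplace--Beltrami formula has fewer terms) at the cost of two successive chain-rule calculations, while your route produces one more off-diagonal entry in $g^{ij}$ but avoids the double substitution entirely; both require the same geometric-series expansion of $\rho^{-1},\rho^{-2},\rho^{-3}$ and both give the same final operator. Your observation about the cancellation of the order-$\ep^3$ terms without a $(t+h)$ prefactor --- concretely, the term $-\ep^3|h'|^2k\rho^{-3}\partial_t$ coming from $\rho^{-1}\partial_s(-\ep h'/\rho)$ against $+\ep^3|h'|^2k\rho^{-3}\partial_t$ coming from $\rho^{-1}\partial_t(\ep^2|h'|^2/\rho)$ --- is precisely what keeps $D_{\ep,h}$ carrying the advertised small prefactors and is correctly flagged. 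Your metric and $g^{-1}$ are correct (easily verified since $gg^{-1}=I$), your identification of the sources of each displayed term ($-2\ep h'\partial_{st}$ from $g^{st}$, $\ep^2|h'|^2\partial_{tt}$ from $g^{tt}$, $-\ep^2 h''\partial_t$ from $\rho^{-1}\partial_s(\ep h'/\rho)$, the drift from $\partial_t\rho=-\ep k$) agrees with a direct check, and the uniform positivity of $\rho$ on $\Nn_{\ep,h}$ is exactly the hypothesis that makes the geometric series converge with all derivatives, as you say.
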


Next, we derive an expression for the second term in
equation~\eqref{EqAllenCahn}, in terms of the Fermi coordinates. We
collect the computations in the following lemma, whose proof can be
found also in the Appendix.

\begin{lemma}{\label{LemmaEuclideanGradients}
In the open neighborhood $\Nn_{\ep,h}$ of $\Gamma_{\ep}$, we have
the validity of the fo\-llowing expression
$$
\ep \dfrac{\nabla_{X}a}{a}\cdot \nabla_{X_{\ep,h}}\,= \,\ep
\left[\frac{\partial_{\bs{t}}a}{a}(\ep s,0)+\ep (t+h(\ep s))
\left(\frac{\partial_{\bs{tt}}a}{a}(\ep
s,0)-\biggl\lvert\frac{\partial_{\bs{t}}a}{a}(\ep
s,0)\biggr\rvert^2\right)\right]
\partial_{t}\notag
$$
\begin{equation}\label{GradientsXeph}
+\,\ep\frac{\partial_{\bs{s}}a}{a}(\ep s,0)[\partial_{s}-\ep h'(\ep
s)\partial_{t}]\notag +E_{\ep,h}(s,t)
\end{equation}
where
\begin{align}
E_{\ep,h}(s,t)&:=\ep^2 (t+h(\ep s)) D_0(\ep
s,\ep(t+h))[\partial_{s}-\ep h'(\ep s)\partial_{t}]\notag\\[0.2cm]
&+\ep^3 (t+h(\ep s))^2F_0(\ep s,\ep(t+h))\partial_{t}\label{ErrorEGradientsXeph}
    \end{align}
and for which the next functions are smooth
    \begin{align*}
    D_0(\ep s ,\ep (t+h))&=\partial_{\bs{t}}\left[\frac{\partial_{\bs{s}}a}{a}\right](\ep s,0)
    +\ep \mathit{O}\left((t+h(\ep s))\partial_{\bs{t}\bs{t}}\left[\frac{\partial_t a}{a}\right]\right)\\[0.1cm]
    &+A_0(\ep s,\ep (t+h))\frac{\partial_{\bs{s}}a}{a}(\ep s,\ep(t+h))\\
    F_0(\ep s ,\ep (t+h))&=\dfrac{1}{2}\partial_{\bs{t}\bs{t}}\left[\frac{\partial_t a}{a}\right](\ep s,0)+\ep
    \mathit{O}\left((t+h(\ep s))\partial_{\bs{t}\bs{t}\bs{t}}\left[\frac{\partial_t a}{a}\right]\right)
    \end{align*}
and where $A_0(\ep s,\ep (t+h))$ given in
\eqref{ErrorA0LaplacianXeph}. Further, these relations can be
differentiated. }\end{lemma}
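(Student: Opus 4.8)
The plan is to establish \eqref{GradientsXeph} by the same coordinate computation that underlies Lemma~\ref{LemmaEuclideanLaplacian}, reusing the metric data produced in that proof. After the scaling $\bar{x}=\ep x$, the drift term in \eqref{EqAllenCahn} is the first order operator $v\mapsto\ep\,\dfrac{\nabla_{\bar{x}}a}{a}\cdot\nabla_{x}v$, which must be evaluated along $\bar{x}=\ep X_{\ep,h}(s,t)=X\big(\ep s,\ep(t+h(\ep s))\big)$, i.e.\ at the point of $\R^{2}$ whose Fermi coordinates relative to the undilated curve $\Gamma$ are $s'=\ep s$ and $z'=\ep(t+h(\ep s))$. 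Throughout I write $\partial_{\bs{s}}a$ and $\partial_{\bs{t}}a$ for the arclength and the normal derivatives of $a$ with respect to these Fermi coordinates, so that $a(s,z)=a(X(s,z))$ as in the Introduction.

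First I would record, exactly as in the proof of Lemma~\ref{LemmaEuclideanLaplacian}, the coordinate vector fields $\partial_{t}X_{\ep,h}=\nu(\ep s)$ and $\partial_{s}X_{\ep,h}=\big(1-\ep(t+h)k(\ep s)\big)\dot{\gamma}(\ep s)+\ep h'(\ep s)\nu(\ep s)$, hence the metric entries $g_{tt}=1$, $g_{st}=\ep h'(\ep s)$, $g_{ss}=\big(1-\ep(t+h)k\big)^{2}+\ep^{2}|h'|^{2}$, the determinant $\det g=\big(1-\ep(t+h)k\big)^{2}$ (positive on $\Nn_{\ep,h}$ for $\ep$ small by \eqref{IntrodEq15}), and the inverse entries $g^{ss}=\big(1-\ep(t+h)k\big)^{-2}$, $g^{st}=-\ep h'(\ep s)\,g^{ss}$, $g^{tt}=1+\ep^{2}|h'|^{2}g^{ss}$. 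Next I would express the Euclidean gradient of $a$ in the moving frame of $\Gamma$: at a point with Fermi coordinates $(s',z')$ one has $\nabla_{\bar{x}}a=\dfrac{\partial_{\bs{s}}a}{1-z'k(s')}\,\dot{\gamma}(s')+\partial_{\bs{t}}a\,\nu(s')$; since $z'k(s')=\ep(t+h)k(\ep s)$ is precisely the factor occurring in $\partial_{s}X_{\ep,h}$, the two pairings simplify to $\nabla_{\bar{x}}a\cdot\partial_{s}X_{\ep,h}=\partial_{\bs{s}}a+\ep h'(\ep s)\,\partial_{\bs{t}}a$ and $\nabla_{\bar{x}}a\cdot\partial_{t}X_{\ep,h}=\partial_{\bs{t}}a$.

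Writing $\nabla_{\bar{x}}a\cdot\nabla_{x}v=\sum_{i,j\in\{s,t\}}g^{ij}\big(\nabla_{\bar{x}}a\cdot\partial_{i}X_{\ep,h}\big)\partial_{j}v$ and using $g^{st}=-\ep h'g^{ss}$, the cross contributions cancel and one is left with the clean intermediate identity $\nabla_{\bar{x}}a\cdot\nabla_{x}v=\partial_{\bs{t}}a\,\partial_{t}v+\big(1-\ep(t+h)k(\ep s)\big)^{-2}\partial_{\bs{s}}a\,\big(\partial_{s}v-\ep h'(\ep s)\partial_{t}v\big)$, where all derivatives of $a$ are taken at $\big(\ep s,\ep(t+h(\ep s))\big)$ and the tangential combination $\partial_{s}-\ep h'(\ep s)\partial_{t}$ already appears. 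I would then multiply by $\ep/a$, with $a=a(\ep s,\ep(t+h(\ep s)))$, and Taylor expand in the normal variable about $z'=0$; that is, expand $\big(1-\ep(t+h)k\big)^{-2}$, $a(\ep s,\cdot)$, $\partial_{\bs{s}}a(\ep s,\cdot)$ and $\partial_{\bs{t}}a(\ep s,\cdot)$ in powers of $\ep(t+h)$ with integral remainder. The terms of order $\ep$ reproduce the leading part displayed in \eqref{GradientsXeph}: the $\partial_{t}$-coefficient $\ep\big[\tfrac{\partial_{\bs{t}}a}{a}(\ep s,0)+\ep(t+h)\big(\tfrac{\partial_{\bs{t}\bs{t}}a}{a}(\ep s,0)-\big|\tfrac{\partial_{\bs{t}}a}{a}(\ep s,0)\big|^{2}\big)\big]$, in which the last term arises by combining the $z'$-linear part of $\partial_{\bs{t}}a$ with the expansion of $1/a$, and the coefficient $\ep\tfrac{\partial_{\bs{s}}a}{a}(\ep s,0)$ of the tangential operator. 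All higher order terms are collected into $E_{\ep,h}$: the quadratic Taylor remainder of $\partial_{\bs{t}}a/a$ produces $\ep^{3}(t+h)^{2}F_{0}\,\partial_{t}$ with $F_{0}=\tfrac12\partial_{\bs{t}\bs{t}}\big[\tfrac{\partial_{\bs{t}}a}{a}\big](\ep s,0)+\ep\,\mathit{O}\big((t+h)\partial_{\bs{t}\bs{t}\bs{t}}\big[\tfrac{\partial_{\bs{t}}a}{a}\big]\big)$, while the remainder of $\big(1-\ep(t+h)k\big)^{-2}\partial_{\bs{s}}a/a$ (after writing $\big(1-\ep(t+h)k\big)^{-2}=1+\ep(t+h)A_{0}$ with $A_{0}$ the function already identified in \eqref{ErrorA0LaplacianXeph}) produces $\ep^{2}(t+h)D_{0}[\partial_{s}-\ep h'\partial_{t}]$ with $D_{0}=\partial_{\bs{t}}\big[\tfrac{\partial_{\bs{s}}a}{a}\big](\ep s,0)+A_{0}\,\tfrac{\partial_{\bs{s}}a}{a}(\ep s,\ep(t+h))+\ep\,\mathit{O}\big((t+h)\partial_{\bs{t}\bs{t}}\big[\tfrac{\partial_{\bs{t}}a}{a}\big]\big)$. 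The claimed smoothness of $D_{0},F_{0}$ and the differentiability of the $\mathit{O}(\cdot)$ terms then follow from $a\in C^{5}(\R^{2})$ in \eqref{smoothnessfora} together with the decay bounds \eqref{IntrodEq15} and \eqref{IntrodEq14}.

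The only point requiring real care is the bookkeeping: one must keep track simultaneously of the two rescalings (the dilation $\bar{x}=\ep x$ and the arclength parameter $s$) and of the translation by $h$, so that every normal derivative of $a$ is consistently the one with respect to $z$ and every remainder falls into exactly the format of \eqref{ErrorEGradientsXeph}. Verifying that the expansion coefficient $A_{0}$ arising here is the very same one produced in Lemma~\ref{LemmaEuclideanLaplacian} is the natural consistency check that makes the two expansions dovetail; beyond this no nontrivial estimate is involved.
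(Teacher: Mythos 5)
Your proposal is correct and reaches the same conclusion via essentially the same strategy: express the drift operator in Fermi coordinates adapted to the curve, expand about the curve, and identify $A_0$ with the function already produced by Lemma~\ref{LemmaEuclideanLaplacian}. The only difference is organizational. The paper proceeds in three sequential steps: it first writes $\nabla_{\bar{x}}a/a\cdot\nabla_{\bar{x}}$ in the undilated Fermi coordinates $X(\bs{s},\bs{t})$, where the metric is diagonal ($g_{\bs{s}\bs{t}}=0$), Taylor expands there, and only afterwards applies the dilation $(\bs{s},\bs{t})=(\ep s,\ep t)$ and the translation by $h$. You instead compute the full metric $g_{ij}$ of $X_{\ep,h}$ directly, where the translation produces the off-diagonal entry $g_{st}=\ep h'(\ep s)$, and then observe the clean cancellation coming from $g^{st}=-\ep h'\,g^{ss}$ and $g^{tt}=1+\ep^{2}|h'|^{2}g^{ss}$, which yields the intermediate identity $\nabla_{\bar{x}}a\cdot\nabla_{x}=\partial_{\bs{t}}a\,\partial_{t}+g^{ss}\,\partial_{\bs{s}}a\,(\partial_{s}-\ep h'\partial_{t})$ in one shot. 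What this buys is that the tangential combination $\partial_{s}-\ep h'\partial_{t}$ falls out automatically from the metric algebra rather than appearing at the end after a chain-rule substitution; the subsequent Taylor expansion about $z'=0$, the identification of $D_0$, $F_0$, and the appeal to \eqref{smoothnessfora}, \eqref{IntrodEq15}, \eqref{IntrodEq14} for smoothness and differentiability of the remainders are identical to the paper.
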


\medskip
Using lemmas \ref{LemmaEuclideanLaplacian} and
\ref{LemmaEuclideanGradients}, the fact that the curve $\Gamma$
satisfies condition \eqref{IntrodEq13}, we can write expression
\eqref{EqAllenCahn} in coordinates $x=X_{\ep,h}(s,t)$ as
$$
\Delta_{X_{\ep,h}}+\ep \dfrac{\nabla_{X}a(\ep x)}{a(\ep x)}
\cdot\nabla_{X_{\ep,h}}\,=\,
$$
$$
\partial_{tt} \,+\,
\partial_{ss}\,+\, \ep\frac{\partial_{\bs{s}}a}{a}(\ep s,0)
\partial_{s}
$$

$$
\,-\,\ep^2\left\{ h''(\ep s)+ \dfrac{\partial_{\bs{s}}a}{a}(\ep
s,0)\,h'(\ep s) +\left[2\,k^2(\ep
s)-\dfrac{\partial_{\bs{t}\bs{t}}a}{a}(\ep s,0)\right]h(\ep
s)\right\}\partial_{t}
$$

$$
-\ep^2\left[k^2(\ep s)-\dfrac{\partial_{\bs{t}\bs{t}}a}{a}(\ep s,0)
+\biggl\lvert\dfrac{\partial_{\bs{t}}a}{a}(\ep s,0)
\biggr\rvert^2\right]t\,\partial_{t} -2\ep h'(\ep s)\partial_{st}
+\ep^2|h'(\ep s)|^2\partial_{tt}
$$

$$
+\,\ep(t+h(\ep s))A_0(\ep s,\ep(t+h))[\partial_{ss}-2\ep h'(\ep s)
\partial_{ts}-\ep^2h''(\ep s)\partial_t+\ep^2|h'(\ep s)|^2\partial_{tt}
$$

\begin{equation}
+\,\ep^2\,(t+h(\ep s))\tilde{B}_0(\ep s,\ep(t+h))[\partial_{s}- \ep
h'(\ep s)
\partial_t]\,+\,\ep^3(t+h(\ep s))^2\tilde{C}_0(\ep
s,\ep(t+h))\partial_{t}\label{EcuacionAllen-CahnEnVecindadDeFermiEq4}
\end{equation}
where
\begin{equation}
\tilde{B}_0(\ep s,\ep(t+h)):=B_0(\ep s,\ep(t+h))+ D_0(\ep
s,\ep(t+h))\label{EcuacionAllen-CahnEnVecindadDeFermiEq5}
\end{equation}
\begin{equation}
\tilde{C}_0(\ep s,\ep(t+h)):=C_0(\ep s,\ep(t+h))+F_0(\ep s,\ep
(t+h)).\label{EcuacionAllen-CahnEnVecindadDeFermiEq6}
\end{equation}

We postpone detailed computations to the Appendix, in order to
keep the presentation as clear as possible.

\section{The Jacobi Operator $\J_{a,\Gamma}$}\label{section3}

This section is meant to provide a complete proof of proposition
\ref{PropNonlinearJacobiTheory}. Recall that the Jacobi operator of
the curve $\Gamma$ associated to the potential $a$, corresponds to
the linear operator
\begin{align}
\J_{a,\Gamma}[h](\bs{s})= h''(\bs{s})+\dfrac{\partial_{\bs{s}}
a(\bs{s},0)}{a(\bs{s},0)}h'(\bs{s})-Q(\bs{s})h(\bs{s})
\label{JacobiOperator}
\end{align}

where we recall that
\begin{align}
Q(\bs{s}):=\biggl[\dfrac{
\partial_{\bs{t}\bs{t}}a(\bs{s},0)}{a(\bs{s},0)}
-2k^2(\bs{s})\biggr]\label{OperatorQ}
\end{align}

Recall also that we are assuming the curve $\Gamma$ to be
nondegenerate, which means that the only bounded solution to
$$\J_{a,\Gamma}[h](\bs{s})=0,\quad \forall \bs{s}\in\R$$
is the trivial one.

\medskip

\subsection{The kernel of  $\J_{a,\Gamma}$. } In order to
find accurate information on the kernel of \eqref{JacobiOperator},
we consider the auxiliary equation
\begin{align}
\frac{d}{d\bs{s}}\left(p(\bs{s})\frac{d}{d\bs{s}}h\right)-q(\bs{s})h=0,
\;\text{ in }\;\R\label{AE}
\end{align}
where we assume that $p,q:\R\to\R$ satisfy the following
\begin{align}
&p\in C^1[0,\infty)\cap L^{\infty}[0,\infty),\ q\in C^1[0,\infty)\label{StudyKernelJacobiOpEq1}\\[0.2cm]
&p(\bs{s})\geq p_0>0,\quad \forall \bs{s}\geq0
\label{StudyKernelJacobiOpEq2_1}\\[0.2cm]
&\lim_{\bs{s}\to\pm\infty}p(\bs{s})=:p(\pm \infty)>0
\label{StudyKernelJacobiOpEq2}\\[0.2cm]
&|p(\bs{s})|+(1+|\bs{s}|)^{2+\alpha}|p'(\bs{s})|\leq C,\quad \forall
\bs{s}\geq0
\label{StudyKernelJacobiOpEq3}\\[0.2cm]
&|q(\bs{s})|+|q'(\bs{s})|\leq \frac{C}{1+|\bs{s}|^{2+\alpha}},\quad
\forall \bs{s}\geq0\label{StudyKernelJacobiOpEq4}
\end{align}
for some constants $\alpha>-1$, $\beta_0>0$ and $C>0$.

The first result concerns the decay for the derivative of a solution
to the auxiliary equation, provided that $p$ and $q$ decay
sufficiently fast.
\begin{lemma}\label{LemmaDecayDerivativeJacobi}
Suppose $\alpha>-1$, and consider a one-sided bounded solution $h\in
L^{\infty}[0,\infty)$ of $\eqref{AE}$, for which functions $p$ and
$q$ fulfill \eqref{StudyKernelJacobiOpEq1} to
\eqref{StudyKernelJacobiOpEq4}. Then there is a constant
$C=C(p,q,\alpha,h)>0$ such that
\begin{align*}
|h'(\bs{s})|\leq \frac{C}{|\bs{s}|^{1+\alpha}}, \quad \forall
\bs{s}>0
\end{align*}
where
$C(p,q,\alpha,h)=\|p^{-1}\|_{L^{\infty}[0,\infty)}\|h\|_{L^{\infty}
[0,\infty)}\|(1+|\bs{s}|)^{2+\alpha}q\|_{L^{\infty}[0,\infty)}$.
\end{lemma}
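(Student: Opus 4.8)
The plan is to integrate equation \eqref{AE} starting from a point where we control the product $p h'$, and then estimate the resulting integral using the decay hypotheses on $q$. First I would rewrite \eqref{AE} in the form $\frac{d}{d\bs{s}}(p(\bs{s})h'(\bs{s})) = q(\bs{s})h(\bs{s})$, so that for $\bs{s}_2 > \bs{s}_1 > 0$ we have
\begin{align*}
p(\bs{s}_2)h'(\bs{s}_2) - p(\bs{s}_1)h'(\bs{s}_1) = \int_{\bs{s}_1}^{\bs{s}_2} q(\sigma)h(\sigma)\,d\sigma.
\end{align*}
Since $h$ is bounded and $|q(\sigma)| \leq C(1+|\sigma|)^{-(2+\alpha)}$ with $\alpha > -1$, the right-hand side is absolutely convergent as $\bs{s}_2 \to \infty$. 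Therefore the limit $\ell := \lim_{\bs{s}\to\infty} p(\bs{s})h'(\bs{s})$ exists and is finite.

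The key claim is then that $\ell = 0$. If $\ell \neq 0$, then since $p(\bs{s}) \to p(+\infty) > 0$ by \eqref{StudyKernelJacobiOpEq2}, we would get $h'(\bs{s}) \to \ell/p(+\infty) \neq 0$, forcing $|h(\bs{s})| \to \infty$, which contradicts $h \in L^{\infty}[0,\infty)$. Hence $\ell = 0$, and consequently
\begin{align*}
p(\bs{s})h'(\bs{s}) = -\int_{\bs{s}}^{\infty} q(\sigma)h(\sigma)\,d\sigma, \quad \forall \bs{s} > 0.
\end{align*}
Bounding this gives $|p(\bs{s})h'(\bs{s})| \leq \|h\|_{L^\infty}\int_{\bs{s}}^\infty |q(\sigma)|\,d\sigma \leq \|h\|_{L^\infty}\,\|(1+|\sigma|)^{2+\alpha}q\|_{L^\infty}\int_{\bs{s}}^\infty (1+\sigma)^{-(2+\alpha)}\,d\sigma$, and since $\alpha > -1$ the tail integral is bounded by $\frac{1}{1+\alpha}(1+\bs{s})^{-(1+\alpha)} \leq \frac{C}{\bs{s}^{1+\alpha}}$. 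Finally, dividing by $p(\bs{s}) \geq p_0 > 0$ using \eqref{StudyKernelJacobiOpEq2_1} yields $|h'(\bs{s})| \leq \|p^{-1}\|_{L^\infty}\|h\|_{L^\infty}\|(1+|\sigma|)^{2+\alpha}q\|_{L^\infty}\,\bs{s}^{-(1+\alpha)}$, which is exactly the asserted bound with the stated constant.

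The only real subtlety — the step I would be most careful about — is justifying the existence and vanishing of the limit $\ell$ without circular reasoning: one must observe that boundedness of $h$ alone, together with the integrability of $qh$ coming from $\alpha > -1$, suffices to pin down $\ell = 0$; there is no need to invoke any decay of $h$ itself (which we are not given) or of $p'$ beyond ensuring $p$ has a positive limit. Everything else is a routine integral tail estimate. I would also note that the hypotheses \eqref{StudyKernelJacobiOpEq1} and \eqref{StudyKernelJacobiOpEq3} on $p$ (boundedness and the decay of $p'$) are not actually needed for this particular lemma beyond \eqref{StudyKernelJacobiOpEq2_1}–\eqref{StudyKernelJacobiOpEq2}; they will presumably be used in later results about the kernel.
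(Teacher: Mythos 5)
Your proof is correct, and it takes a genuinely cleaner route than the paper's for the crucial step of showing $\lim_{\bs{s}\to\infty}p(\bs{s})h'(\bs{s})=0$. The paper reaches this by integrating the equation a second time (from $0$ to $\bs{s}$), obtaining an explicit expression for $p(+\infty)h(\bs{s})$ that contains the linear term $p(+\infty)h'(+\infty)\bs{s}$ plus two auxiliary integrals $I$ and $II$; the integral $I$ involves $h'$ against $p'$, and its estimate is where the decay hypothesis \eqref{StudyKernelJacobiOpEq3} on $p'$ is actually used. Only after bounding $I$ and $II$ does the paper divide by $\bs{s}$ and invoke boundedness of $h$ to conclude $h'(+\infty)=0$. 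You instead argue directly: having established that $\ell:=\lim p(\bs{s})h'(\bs{s})$ exists (from integrability of $qh$), the positive limit $p(+\infty)>0$ of \eqref{StudyKernelJacobiOpEq2} forces $h'(\bs{s})\to\ell/p(+\infty)$, and if this were nonzero then $h$ would grow linearly, contradicting $h\in L^\infty$. This is more elementary, avoids the double integration altogether, and --- as you correctly observe --- does not use hypotheses \eqref{StudyKernelJacobiOpEq1} or \eqref{StudyKernelJacobiOpEq3} at all, only \eqref{StudyKernelJacobiOpEq2_1}--\eqref{StudyKernelJacobiOpEq2} and \eqref{StudyKernelJacobiOpEq4}. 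Both proofs arrive at the same integral representation $p(\bs{s})h'(\bs{s})=-\int_{\bs{s}}^{\infty}q\,h$ and finish with the identical tail estimate, so the conclusion and the stated constant agree. (The paper has a harmless sign typo in its final display, writing $\int_{\bs{s}}^{\infty}qh$ without the minus sign, but takes absolute values immediately so nothing is affected.)
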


\begin{proof}
Observe first that thanks to assumptions
\eqref{StudyKernelJacobiOpEq1}-\eqref{StudyKernelJacobiOpEq2}, it
holds
\begin{align}
p(\bs{s})=p(+\infty)-\int^{+\infty}_{\bs{s}}p'(\xi)d\xi\label{StudyKernelJacobiOpEq5}
\end{align}
Now, since $h$ solves the equation, then for $\bs{s}_1>\bs{s}_2>0$
we have
\begin{align*}
|p(\bs{s}_1)h'(\bs{s}_1)-p(\bs{s}_2)h'(\bs{s}_2)|&\leq
\int_{\bs{s}_2}^{\bs{s}_1}\left|q(\bs{s})h(\bs{s})\right|\\
&\leq \|h\|_{L^{\infty}[0,\infty)}\|(1+|\bs{s}|)^{2
+\alpha}q\|_{L^{\infty}[0,\infty)}\left| \int_{\bs{s}_2}^{\bs{s}_1}\frac{1}{1+|\xi|^{2+\alpha}}d\xi\right|\\
&\leq C(q,h)\left(\frac{1}{|\bs{s}_2|^{1+\alpha}}-\frac{1}{|\bs{s}_1|^{1+\alpha}}\right)
\end{align*}
where $C(q,h):=C\|h\|_{L^{\infty}[0,\infty)}\|(1+|\bs{s}|)^{2+\alpha}q\|_{L^{\infty}[0,\infty)}<\infty$ is fixed.

In particular using that $1+\alpha>0$, it follows that
$$ \lim_{\bs{s}_1\to+\infty}|p(\bs{s}_1)h'(\bs{s}_1)|\leq |p(\bs{s}_2)
h'(\bs{s}_2)|+C(q,h)\frac{1}{|\bs{s}_2|^{1+\alpha}}<+\infty$$ which
implies that $p(+\infty)h'(\infty)\in\R$. From this, we can rewrite
equation \eqref{AE} in its integral form
\begin{align}
p(\bs{s})h'(\bs{s})=p(+\infty)h'(+\infty)-\int^{+\infty}_{\bs{s}}
q(\xi)h(\xi) d\xi.\label{StudyKernelJacobiOpEq6}
\end{align}

but using \eqref{StudyKernelJacobiOpEq5}, we find that
\begin{align}
&p(+\infty)h'(\bs{s})-h'(\bs{s})\int^{+\infty}_{\bs{s}}p'(\xi)d\xi=p(+\infty)
h'(+\infty)-\int^{+\infty}_{\bs{s}}q(\xi)h(\xi) d\xi\notag
\end{align}

and so
\begin{align}
&p(+\infty)h'(s)=p(+\infty)h'(+\infty)+h'(s)\int^{+\infty}_{s}p'(\xi)d\xi-\int^{+\infty}_{s}q(\xi)h(\xi)
d\xi.\notag
\end{align}

Integrating again between $0$ and $\bs{s}$, we obtain an expression
for the solution $h$ of \eqref{AE}
\begin{align}
p(+\infty)h(s)&=p(+\infty)h(0)+p(\infty)h'(+\infty)s\notag\\[0.3cm]
&+\underbrace{\int_{0}^{s}h'(\xi)\int^{+\infty}_{\xi}p'(\tau)d\tau
d\xi}_{I}-\underbrace{\int^{s}_{0}\int^{+\infty}_{\xi}q(\tau)h(\tau)d\tau
d\xi}_{II}\label{StudyKernelJacobiOpEq7}
\end{align}
Let us estimate these integrals. We first estimate integral I
\begin{align*}
|I|&\leq \int_{0}^{s}|h'(\xi)|\int^{+\infty}_{\xi}|p'(\tau)|d\tau\\
&\leq C\|h'\|_{L^{\infty}[0,\infty)}\|(1+|\bs{s}|^{2+\alpha})
p'\|_{L^{\infty}[0,\infty)}
\int_{0}^{\bs{s}}\int^{+\infty}_{\xi}\frac{1}{1+|\tau|^{2+\alpha}}
d\tau d\xi\\
&\leq
C_{h',p',\alpha}\int_{0}^{\bs{s}}\frac{1}{1+|\xi|^{1+\alpha}}d\xi
=\mathit{O}(1+|\bs{s}|^{-\alpha})
\end{align*}
where $C_{h',p',\alpha}:=C\|h'\|_{L^{\infty}[0,\infty)}\|(1+|\bs{s}|^{2+\alpha})p'\|_{L^{\infty}[0,\infty)}$.\\

\medskip
In the same way, we estimate II
\begin{align*}
|II|&\leq \int^{\bs{s}}_{0}\int^{+\infty}_{\xi}|q(\tau)|\ |h(\tau)|
d\tau d\xi\\
&\leq C\|h\|_{L^{\infty}[0,\infty)}\|(1+|\bs{s}|^{2+\alpha})q
\|_{L^{\infty}[0,\infty)}\int^{\bs{s}}_{0}\int^{+\infty}_{\xi}
\frac{d\tau d\xi}{1+|\tau|^{2+\alpha}}\\
&\leq C_{h,q,\alpha}(1+|\bs{s}|)^{-\alpha}
\end{align*}
with $C_{h,q,\alpha}:=C\|h\|_{L^{\infty}[0,\infty)}\|(1+|\bs{s}|)^{2+\alpha}q\|_{L^{\infty}[0,\infty)}$.\\

Since $h$ is bounded, we deduce from \eqref{StudyKernelJacobiOpEq7}
that
\begin{align}
\mathit{O}(1)=p(+\infty)h(0)+p(+\infty)h'(+\infty)\bs{s}+\mathit{O}(1+
|\bs{s}|^{-\alpha}). \label{StudyKernelJacobiOpEq8}
\end{align}

Dividing \eqref{StudyKernelJacobiOpEq8} by $\bs{s}>0$ and taking
$\bs{s}\to+\infty$, we get that
\begin{align*}
0= p(+\infty)h'(+\infty)
\end{align*}

provided that $\alpha>-1$. From \eqref{StudyKernelJacobiOpEq2}, it
follows that $h'(+\infty)=0$.

\medskip
In particular, the latter fact together with formula
\eqref{StudyKernelJacobiOpEq6}, imply that
\begin{align*}
p(\bs{s})h'(\bs{s})&=\int^{\infty}_{\bs{s}}q(\xi)h(\xi)d\xi
\end{align*}

and consequently
\begin{align*}
\quad |h'(\bs{s})|&\leq
C\|p^{-1}\|_{L^{\infty}[0,\infty)}\|h\|_{L^{\infty}[0,\infty)}\|(1+
|\bs{s}|^{2+\alpha})q\|_{L^{\infty}[0,\infty)} \dfrac{1}{1+|\bs{s}|^{1+\alpha}}
\end{align*}

which completes the proof of the estimate.
\end{proof}

The core of this section is reflected in the next result.

\begin{lemma}{ \label{LemmaBehaviourKernel}
Let $\alpha>0$, and suppose function $q$ satisfies
\eqref{StudyKernelJacobiOpEq1}-\eqref{StudyKernelJacobiOpEq4}. Then
the equation
\begin{align}
u''(\bs{s})-q(\bs{s})u(\bs{s})=0,\quad \text{ in }\quad \R
\label{StudyKernelJacobiOpEq9}
\end{align}
has two linearly independent smooth solutions $u,\tilde{u}$, so that
as $s\to+\infty$
      \begin{align*}
       u(\bs{s})&=\bs{s}+\mathit{O}(1)+\mathit{O}(|\bs{s}|^{-1-\alpha}),\qquad \tilde{u}(\bs{s})
       =1+\mathit{O}(|\bs{s}|^{-1}+|\bs{s}|^{-\alpha})\\[0.2cm]
       u'(\bs{s})&=1+\mathit{O}(|\bs{s}|^{-1}+|\bs{s}|^{-\alpha}),
       \qquad \tilde{u}'(\bs{s})=\mathit{O}(|\bs{s}|^{-1}+|\bs{s}|^{-1-\alpha})       
      \end{align*}
}
\end{lemma}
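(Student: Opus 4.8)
The plan is to treat \eqref{StudyKernelJacobiOpEq9} as a perturbation of the constant-coefficient equation $u''=0$, whose solutions are $1$ and $\bs{s}$, and build the two solutions with the prescribed asymptotics by a fixed-point argument on $[\bs{s}_0,\infty)$ for $\bs{s}_0$ large, followed by extension to all of $\R$ by ODE theory. Since the coefficient $q$ is integrable at infinity — indeed \eqref{StudyKernelJacobiOpEq4} gives $|q(\bs{s})|\le C(1+|\bs{s}|)^{-2-\alpha}$ with $\alpha>0$, so $q\in L^1$ and moreover $\int_{\bs{s}}^\infty|q|=\mathit{O}(|\bs{s}|^{-1-\alpha})$ and $\int_{\bs{s}}^\infty\tau|q(\tau)|\,d\tau=\mathit{O}(|\bs{s}|^{-\alpha})$ — the standard theory of asymptotic integration (Levinson/Hartman type) applies. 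Concretely, for the \emph{bounded} solution $\tilde u$ I would set up the integral equation
\begin{align*}
\tilde u(\bs{s})=1-\int_{\bs{s}}^{\infty}(\tau-\bs{s})\,q(\tau)\,\tilde u(\tau)\,d\tau,
\end{align*}
which formally encodes $\tilde u''=q\tilde u$ with $\tilde u(+\infty)=1$, $\tilde u'(+\infty)=0$; here the kernel $(\tau-\bs{s})$ is the Green's function of $\partial_{\bs{s}\bs{s}}$ adapted to these conditions. For the linearly growing solution $u$ I would instead use
\begin{align*}
u(\bs{s})=\bs{s}+\int_{\bs{s}}^{\infty}(\bs{s}-\tau)\,q(\tau)\,u(\tau)\,d\tau.
\end{align*}

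Next I would run the contraction. Work in the Banach space $X_0=\{\tilde u\in C[\bs{s}_0,\infty):\ \sup_{\bs{s}\ge\bs{s}_0}|\tilde u(\bs{s})|<\infty\}$ with the sup norm for $\tilde u$, and in $X_1=\{u\in C[\bs{s}_0,\infty):\ \sup_{\bs{s}\ge\bs{s}_0}|u(\bs{s})|/\bs{s}<\infty\}$ with the weighted norm $\|u\|_1=\sup|u(\bs{s})|/\bs{s}$ for $u$. In either case the operator has the form $(\Phi v)(\bs{s})=v_0(\bs{s})+\int_{\bs{s}}^\infty K(\bs{s},\tau)q(\tau)v(\tau)\,d\tau$; using $|K(\bs{s},\tau)|\le \tau$ for the first problem and $|K(\bs{s},\tau)|\le\tau$ together with the weight for the second, the key estimate in both cases is
\begin{align*}
\int_{\bs{s}_0}^{\infty}\tau\,|q(\tau)|\,d\tau\ \le\ C\int_{\bs{s}_0}^{\infty}\frac{d\tau}{(1+\tau)^{1+\alpha}}\ =\ \mathit{O}(\bs{s}_0^{-\alpha}),
\end{align*}
which is $<1$ once $\bs{s}_0$ is large; hence $\Phi$ is a contraction on a ball and has a unique fixed point. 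The asymptotics are then read off directly from the integral equations: for $\tilde u$, the tail integral is bounded by $\|\tilde u\|_\infty\int_{\bs{s}}^\infty(\tau-\bs{s})|q(\tau)|\,d\tau=\mathit{O}(|\bs{s}|^{-\alpha})$ (and a sharper splitting, estimating $\int_{\bs{s}}^{2\bs{s}}$ and $\int_{2\bs{s}}^\infty$ separately, yields the stated $\mathit{O}(|\bs{s}|^{-1}+|\bs{s}|^{-\alpha})$), while differentiating the integral equation gives $\tilde u'(\bs{s})=\int_{\bs{s}}^\infty q(\tau)\tilde u(\tau)\,d\tau=\mathit{O}(|\bs{s}|^{-1-\alpha})$, hence also the claimed form with the $|\bs{s}|^{-1}$ slack absorbed. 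Analogously, dividing the equation for $u$ by $\bs{s}$ and estimating shows $u(\bs{s})=\bs{s}+\mathit{O}(1)+\mathit{O}(|\bs{s}|^{-1-\alpha})$, and $u'(\bs{s})=1+\mathit{O}(|\bs{s}|^{-1}+|\bs{s}|^{-\alpha})$. (Note Lemma~\ref{LemmaDecayDerivativeJacobi}, applied with $p\equiv1$, already furnishes $|\tilde u'(\bs{s})|\le C|\bs{s}|^{-1-\alpha}$ directly, which I would cite to shorten this step.)

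Finally I would address linear independence and global existence. The Wronskian $W=u\tilde u'-u'\tilde u$ is constant for \eqref{StudyKernelJacobiOpEq9} (no first-order term), and evaluating the asymptotics as $\bs{s}\to+\infty$ gives $W\to 1\cdot\mathit{O}(|\bs{s}|^{-1})-1\cdot 1=-1\ne0$, so $u,\tilde u$ are linearly independent on $[\bs{s}_0,\infty)$. Since $q\in C^1$ on all of $\R$ by \eqref{StudyKernelJacobiOpEq1}, both solutions extend uniquely and smoothly to $\R$ by the standard existence/uniqueness theorem for linear ODEs, and the constancy of $W$ preserves independence; smoothness ($C^3$, in fact, from $q\in C^1$ bootstrapped through the equation) follows likewise. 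The one genuinely delicate point is sharpening the crude bound $\mathit{O}(|\bs{s}|^{-\alpha})$ on the tail of $\tilde u$ to the stated $\mathit{O}(|\bs{s}|^{-1}+|\bs{s}|^{-\alpha})$ — this is purely a matter of splitting the integration interval at $2\bs{s}$ and using $(\tau-\bs{s})\le\bs{s}$ on the near piece against $\int_{\bs{s}}^{2\bs{s}}|q|=\mathit{O}(|\bs{s}|^{-1-\alpha})$, while on the far piece $(\tau-\bs{s})\le\tau$ against $\int_{2\bs{s}}^\infty|q|\cdot\tau$; I expect the bookkeeping of these tail estimates, uniform in the large parameter $\bs{s}_0$, to be the main (though routine) obstacle.
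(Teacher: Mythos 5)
Your approach is genuinely different from the paper's. The paper substitutes $u=\bs{s}\,v$, rewrites the equation in self-adjoint form $\frac{d}{d\bs{s}}(\bs{s}^2 v')-q\bs{s}^2 v=0$, converts to a first-order system $x=\bs{s}^2v'$, $y=v$, integrates, runs a Gronwall estimate to show $y$ is bounded with a finite limit at $+\infty$, and then produces the second solution $\tilde u$ by the reduction-of-order formula $\tilde u=u\int_{\bs{s}}^\infty u^{-2}$. You instead write two direct integral equations of Volterra/Fredholm type and run a contraction. Both strategies are standard for asymptotic integration, and for the bounded solution $\tilde u$ your route is clean and arguably shorter. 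Two issues, one minor and one substantive.

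Minor: the signs in both of your integral equations are reversed. For the conditions $\tilde u(+\infty)=1$, $\tilde u'(+\infty)=0$ the correct formula is $\tilde u(\bs{s})=1+\int_{\bs{s}}^\infty(\tau-\bs{s})q(\tau)\tilde u(\tau)\,d\tau$ (check: $\tilde u'=-\int_{\bs{s}}^\infty q\tilde u$ and $\tilde u''=q\tilde u$); your version encodes $\tilde u''=-q\tilde u$. This does not affect the estimates.

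Substantive: your integral equation for the growing solution,
\begin{equation*}
u(\bs{s})=\bs{s}+\int_{\bs{s}}^{\infty}(\bs{s}-\tau)\,q(\tau)\,u(\tau)\,d\tau,
\end{equation*}
does not converge for $0<\alpha\le 1$. With $|u(\tau)|\le\|u\|_1\,\tau$ and $|q(\tau)|\le C(1+\tau)^{-2-\alpha}$, the integrand is of size $\tau\cdot\tau^{-2-\alpha}\cdot\tau=\tau^{-\alpha}$, and $\int_{\bs{s}}^\infty\tau^{-\alpha}\,d\tau$ diverges unless $\alpha>1$. The ``key estimate'' you quote, $\int_{\bs{s}_0}^\infty\tau|q|\,d\tau=O(\bs{s}_0^{-\alpha})$, is correct but it is the wrong estimate for this equation: what is actually needed is $\int^\infty\tau^2|q|\,d\tau<\infty$, which fails in exactly the range of $\alpha$ the lemma is supposed to cover. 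To repair this inside your framework, build $u$ from a \emph{forward} Volterra equation from a finite base point, e.g.\ $u(\bs{s})=\bs{s}_0+(\bs{s}-\bs{s}_0)+\int_{\bs{s}_0}^{\bs{s}}(\bs{s}-\tau)q(\tau)u(\tau)\,d\tau$, where the weighted estimate $\frac{1}{\bs{s}}\int_{\bs{s}_0}^{\bs{s}}(\bs{s}-\tau)|q||u|\,d\tau\le\|u\|_1\int_{\bs{s}_0}^\infty\tau|q|\,d\tau$ closes; or adopt the paper's substitution $u=\bs{s}v$, under which the relevant integral becomes $\int\tau|q|\,d\tau$ and converges for every $\alpha>0$. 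As a small further caution, your Wronskian argument should invoke the sharper bound $\tilde u'(\bs{s})=\int_{\bs{s}}^\infty q\tilde u=O(\bs{s}^{-1-\alpha})$ that your own computation gives, not merely the lemma's stated $O(\bs{s}^{-1})$, so that $u\tilde u'\to 0$ and $W\to -1$.
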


\begin{proof}
To begin with, we look for a solution $u(\bs{s})=\bs{s}v(\bs{s})$,
so that, multiplying equation \eqref{StudyKernelJacobiOpEq9} by
$\bs{s}$, we find that $v$ satisfies
\begin{align}
\frac{d}{d\bs{s}}\left(\bs{s}^2v'(\bs{s})\right)-q(\bs{s})\bs{s}^2
v(\bs{s})=0 \label{StudyKernelJacobiOpEq12}
\end{align}

Now, consider the functions
\begin{align*}
x(\bs{s}):=\bs{s}^2v'(\bs{s}), \quad
y(\bs{s}):=v(\bs{s})
\end{align*}
so that, equation~\eqref{StudyKernelJacobiOpEq12} becomes the linear
system of differential equations
\begin{equation*}
\left\{
\begin{aligned}
x'(\bs{s})&=q(\bs{s})\bs{s}^2y(\bs{s})\\[0.1cm]
y'(\bs{s})&=\frac{1}{\bs{s}^2}x(\bs{s})
\end{aligned}
\right. ,\quad \forall
\bs{s}\in[\bs{s}_0,+\infty)
\end{equation*}

Integrating this system between $\bs{s}_0$ and $\bs{s}$ we obtain
the identities
\begin{align}
y(\bs{s})&=y(\bs{s}_0)+\int_{\bs{s}_0}^{s}\frac{1}{\xi^2}x(\xi)d\xi\notag\\[0.2cm]
x(\bs{s})&=x(\bs{s}_0)+\int_{\bs{s}_0}^{\bs{s}}q(\xi)\xi^2y(\xi)d\xi
\label{IntegralX}
\end{align}
In particular, we deduce an explicit formula for $y(\bs{s})$, given
by
\begin{align}
y(\bs{s})&=y(\bs{s}_0)+x(\bs{s}_0)\left(\frac{1}{\bs{s}_0}-
\frac{1}{\bs{s}}\right)+
\int^{\bs{s}}_{\bs{s}_0}y(\tau)q(\tau)\tau^2
\left(\frac{1}{\tau}-\frac{1}{\bs{s}}\right)
d\tau\label{StudyKernelJacobiOpEq15}
\end{align}

In this way, we can estimate $y(\bs{s})$ for $\bs{s}\geq \bs{s}_0$
as
\begin{align}
|y(\bs{s})|&\leq |y(\bs{s}_0)|+|x(\bs{s}_0)|\left(\frac{1}{\bs{s}_0}
-\frac{1}{\bs{s}}\right)+\int^{\bs{s}}_{\bs{s}_0} |y(\tau)|\
|q(\tau)|\tau\left(1-\frac{\tau}{\bs{s}}\right)d\tau.\notag
\end{align}

From Gronwall's inequality we find the estimate
\begin{align*}
|y(\bs{s})|&\leq
\left(|y(\bs{s}_0)|+\frac{2|x(\bs{s}_0)|}{\bs{s}_0}\right)
\exp\left(\int^{\bs{s}}_{\bs{s}_0}|q(\tau)|
\tau\left(1-\frac{\tau}{\bs{s}}\right)d\tau\right)
\end{align*}

Notice that, for any $\bs{s}\geq\tau>\bs{s}_0:\;
\left|\tau\left(1-\frac{\tau}{\bs{s}}\right)\right|\leq
2\tau=\mathit{O}(\tau)$. This fact combined with the decay of $q(\bs{s})$,
leads to
\begin{align}
|y(\bs{s})|\leq
C_{q,\alpha}(|y(\bs{s}_0)|+\frac{2}{\bs{s}_0}|x(\bs{s}_0)|)
\label{StudyKernelJacobiOpEq16}
\end{align}
where $C_{q,\alpha}:=C\|(1+|\bs{s}|)^{2+\alpha}q\|_{L^{\infty}[
\bs{s}_0,+\infty)}\int^{\infty}_{\bs{s}_0}|\tau|^{-1-\alpha}d\tau$.

From \eqref{StudyKernelJacobiOpEq15} it follows that for any
$\bs{s}_1>\bs{s}_2\geq \bs{s}_0>0$:
\begin{align*}
|y(\bs{s}_1)-y(\bs{s}_2)|&\leq
|x(\bs{s}_0)|\left(\frac{1}{\bs{s}_2}-\frac{1}{\bs{s}_1}\right)
+C\int^{\bs{s}_1}_{\bs{s}_2}|q(\tau)|\tau d\tau
\end{align*}

implying that $y(+\infty)\in\R$. Moreover, same formula
\eqref{StudyKernelJacobiOpEq15} yields
$$
y(+\infty)=y(\bs{s}_0)+\frac{x(\bs{s}_0)}{\bs{s}_0}+
\int^{+\infty}_{\bs{s}_0} y(\tau)q(\tau)\tau d\tau
$$

which allows us to write
\begin{align*}
y(\bs{s})-y(+\infty)=-\frac{x(\bs{s}_0)}{\bs{s}_0}-
\int^{\bs{s}}_{\bs{s}_0}y(\tau)q(\tau)\frac{\tau^2}{\bs{s}}
d\tau-\int^{+\infty}_{\bs{s}}y(\tau)q(\tau)\tau d\tau
\end{align*}

In particular, by choosing the constants to be $y(+\infty)=1,\
x(\bs{s}_0)=0$, we finally deduce
\begin{align}
y(\bs{s})=1-\int^{\bs{s}}_{\bs{s}_0}y(\tau)q(\tau)\frac{\tau^2}{\bs{s}}
d\tau-\int^{+\infty}_{\bs{s}}y(\tau)q(\tau)\tau d\tau
\label{StudyKernelJacobiOpEq18}
\end{align}

Additionally, the derivative $y'(\bs{s})=v'(\bs{s})$ can be obtained
from $x(\bs{s})$ using relation \eqref{IntegralX}, as
\begin{align}
v'(\bs{s})=\frac{x(\bs{s})}{\bs{s}^2}=\frac{0}{\bs{s}^2}+
\frac{1}{\bs{s}^2} \int^{\bs{s}}_{\bs{s}_0}q(\xi)\xi^2
y(\xi)d\xi.\label{StudyKernelJacobiOpEq19}
\end{align}
Now that $y(\bs{s})$ is bounded in $[\bs{s}_0,+\infty)$, similar
arguments as shown in \eqref{StudyKernelJacobiOpEq16} imply the same
estimates for the integrals in
\eqref{StudyKernelJacobiOpEq18}-\eqref{StudyKernelJacobiOpEq19},
since
$$
\left|\int^{+\infty}_{\bs{s}}y(\tau)q(\tau)\tau d\tau\right|
=\mathit{O}(|\bs{s}|^{\alpha}),\quad
\left|\int^{\bs{s}}_{\bs{s}_0}y(\tau)q(\tau)\frac{\tau^2}{\bs{s}}
d\tau\right|=\mathit{O}(|\bs{s}|^{-1}+|\bs{s}|^{-\alpha})
$$

From these estimates, we conclude that
\begin{align*}
 &v(\bs{s})=y(\bs{s})=1+\mathit{O}(|\bs{s}|^{-1}+|\bs{s}|^{-\alpha})\\[0.2cm]
 &v'(\bs{s})=\mathit{O}(|\bs{s}|^{-2}+|\bs{s}|^{-1-\alpha})
\end{align*}
So the asymptotic behavior of the first solution follows, as
$\alpha>0$ and by definition of $u$:
\begin{equation*}
\begin{aligned}
&u(\bs{s})=\bs{s}\left(1+\mathit{O}(|\bs{s}|^{-1}+|\bs{s}|^{-\alpha})\right)=
\bs{s}+\mathit{O}(1+|\bs{s}|^{1-\alpha})\\[0.2cm]
&u'(\bs{s})=v(\bs{s})+\bs{s}v'(\bs{s})=1+\mathit{O}(|\bs{s}|^{-1}+|\bs{s}|^{-\alpha})
\end{aligned}, \quad \bs{s}\geq \bs{s}_0
\end{equation*}

which finishes the analysis of the profile of the first solution to
equation~\eqref{StudyKernelJacobiOpEq9}.

To conclude, we find $\tilde{u}$ using reduction of order formula,
to find that
$$
\tilde{u}(\bs{s})=\left(\int^{\infty}_{\bs{s}}u^{-2}(\xi)d\xi\right)\cdot
u(\bs{s})
$$

and directly from this, one gets
\begin{align*}
\begin{aligned}
&\tilde{u}(\bs{s})=C(\bs{s})u(\bs{s})=1+
\mathit{O}(|\bs{s}|^{-1}+|\bs{s}|^{-\alpha})\\[0.2cm]
&\tilde{u}'(\bs{s})=C'(\bs{s})u(\bs{s})+C(\bs{s})u'(\bs{s})=
\mathit{O}(|\bs{s}|^{-1}+|\bs{s}|^{-2}+|\bs{s}|^{-1-\alpha})
\end{aligned}\;\;, \quad \text{ for } \bs{s}>>\bs{s}_0
\end{align*}
which concludes the proof of lemma \ref{LemmaBehaviourKernel}.
\end{proof}

Now proceed to state the main result of this section, which
characterize the profile of the kernel of the Jacobi operator.

\begin{prop}\label{PropConstructionKernel}
Let $\Gamma\subset\R^2$ be a stationary non-degenerate curve as in
respect to $a$. Assume also that conditions
\eqref{smoothnessfora}-\eqref{IntrodEq14} are satisfied, for
$\alpha>0$ and additionally the potential stabilizes on the curve at
infinity, namely
\begin{equation*}
a(\pm\infty,0):=\lim_{\bs{s}\to\pm\infty}a(\bs{s},0)>0\in\R.
\end{equation*}

Then, there are two linearly independent elements in the kernel of
$h_1,h_2$  of \eqref{JacobiOperator} satisfying that
\begin{equation}
\begin{aligned}
h_i(\bs{s})&=|\bs{s}|+\mathit{O}(1)+\mathit{O}(|\bs{s}|^{-1}+|\bs{s}|^{-\alpha})\\
h'_i(\bs{s})&=\mathit{O}(1)+\mathit{O}(|\bs{s}|^{-1}+|\bs{s}|^{-1-\alpha})
\end{aligned},\quad \text{ as }\quad (-1)^{i}\bs{s}\to
+\infty\label{PropConstrucionKernelEq2}
\end{equation}
and they are bounded functions as $(-1)^{i+1}\bs{s}\to\infty$.
Furthermore, in the region where the latter happens, it holds
\begin{align}
|h_i(\bs{s})|+(1+|\bs{s}|^{1+\alpha})|h'_i(\bs{s})|\leq C,\quad
\text{ as }\quad
(-1)^{i+1}\bs{s}\to+\infty\label{PropConstrucionKernelEq3}
\end{align}
\end{prop}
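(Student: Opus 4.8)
The plan is to reduce the study of the kernel of $\J_{a,\Gamma}$ to the study of the model equation \eqref{StudyKernelJacobiOpEq9} by removing the first-order term. First I would observe that, setting
\[
\rho(\bs{s}):=a(\bs{s},0),
\]
the operator $\J_{a,\Gamma}[h]=h''+\tfrac{\rho'}{\rho}h'-Q h$ is, after multiplication by $\rho$, in divergence form $\tfrac{1}{\rho}\big((\rho h')'-\rho Q h\big)$, i.e. it is exactly of the type \eqref{AE} with $p=\rho$ and $q=\rho Q$. Assumptions \eqref{smoothnessfora}, \eqref{IntrodEq15}, \eqref{IntrodEq14} together with the new stabilization hypothesis $a(\pm\infty,0)>0$ guarantee that $p=\rho$ satisfies \eqref{StudyKernelJacobiOpEq1}--\eqref{StudyKernelJacobiOpEq3}: boundedness above and below by $m,M$, a finite positive limit at $\pm\infty$, and $|\rho'(\bs{s})|=|\partial_{\bs{s}}a(\bs{s},0)|\le C(1+|\bs{s}|)^{-1-\alpha}$, which is stronger than needed. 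Likewise $q=\rho Q$ inherits from \eqref{IntrodEq15} and \eqref{IntrodEq14} the decay $|q(\bs{s})|+|q'(\bs{s})|\le C(1+|\bs{s}|)^{-2-\alpha}$, so \eqref{StudyKernelJacobiOpEq4} holds. (Here one uses that $Q=\partial_{\bs{t}\bs{t}}a/a-2k^2$ and its derivative are controlled by the decay of $k$, $k'$, $k''$ and of the second derivatives of $a$.)

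Next I would pass from $p=\rho$ to $p\equiv 1$ by a Liouville-type change of the independent variable. Define $\sigma=\Phi(\bs{s}):=\int_0^{\bs{s}}\rho(\xi)^{-1}\,d\xi$; since $m\le\rho\le M$, $\Phi$ is a bi-Lipschitz diffeomorphism of $\R$ onto $\R$, and in the new variable the equation $(\rho h')'-\rho Q h=0$ becomes $\ddot h(\sigma)-\tilde q(\sigma)h(\sigma)=0$ with $\tilde q(\sigma)=\rho(\Phi^{-1}(\sigma))^2 Q(\Phi^{-1}(\sigma))$. Because $\Phi$ distorts lengths only by the bounded factors $1/M\le \Phi'\le 1/m$, one has $|\sigma|\sim|\bs{s}|$, and $\tilde q$ again satisfies the decay hypotheses of Lemma \ref{LemmaBehaviourKernel} with the same $\alpha>0$. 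Applying that lemma gives two linearly independent solutions of the transformed equation with the stated asymptotics $u(\sigma)=\sigma+\mathit{O}(1)+\mathit{O}(|\sigma|^{-1-\alpha})$, $\tilde u(\sigma)=1+\mathit{O}(|\sigma|^{-1}+|\sigma|^{-\alpha})$ and the corresponding derivative estimates, as $\sigma\to+\infty$; an identical argument on $(-\infty,0]$ (or the symmetry $\bs{s}\mapsto-\bs{s}$) produces solutions with the analogous behavior as $\sigma\to-\infty$. Translating back through $\bs{s}=\Phi^{-1}(\sigma)$ and using $|\sigma|\sim|\bs{s}|$ and $\Phi'$ bounded, together with the chain rule $h'(\bs{s})=\dot h(\sigma)/\rho(\bs{s})$, converts these into solutions of $\J_{a,\Gamma}[h]=0$ on each half-line with exactly the profiles \eqref{PropConstrucionKernelEq2}.

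It remains to manufacture the \emph{global} elements $h_1,h_2$ and to establish the boundedness claim \eqref{PropConstrucionKernelEq3}. On $[0,+\infty)$ the two-dimensional solution space of $\J_{a,\Gamma}[h]=0$ is spanned by a solution $\sim|\bs{s}|$ and a bounded one; similarly on $(-\infty,0]$. For the index $i=2$, I would pick the solution $h_2$ that is linearly growing as $\bs{s}\to+\infty$ (so \eqref{PropConstrucionKernelEq2} holds for $i=2$) and extend it as the unique solution of the ODE across $\bs{s}=0$ onto $(-\infty,0]$. On the left half-line $h_2$ is a combination $c_1\phi_-+c_2\psi_-$ of the growing and bounded solutions there; the decisive point is that $c_1=0$, i.e. $h_2$ is bounded as $\bs{s}\to-\infty$. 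This is precisely where nondegeneracy enters: if $h_2$ were unbounded on \emph{both} ends we would still not directly get a contradiction, so instead I argue that the bounded-on-the-left solution $\psi_-$ and the bounded-on-the-right solution $\psi_+$ must be linearly \emph{independent} (otherwise their common multiple is a nontrivial bounded global kernel element, contradicting nondegeneracy); hence $\{\psi_-,\psi_+\}$ is a basis, and I simply \emph{define} $h_2:=\psi_-$ — wait, that is bounded on the left but I need growth on the right, so in fact $\psi_-$ must be unbounded (growing) as $\bs{s}\to+\infty$, because if it were bounded there it would be a global bounded kernel element. Thus $\psi_-$ has exactly the required behavior: linear growth as $\bs{s}\to+\infty$ with the expansion \eqref{PropConstrucionKernelEq2} for $i=2$ (obtained by matching $\psi_-$ against the basis from Lemma \ref{LemmaBehaviourKernel} on the right), and boundedness as $\bs{s}\to-\infty$; set $h_2:=\psi_-$ and, symmetrically, $h_1:=\psi_+$, which grows linearly as $\bs{s}\to-\infty$ and is bounded as $\bs{s}\to+\infty$. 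Their linear independence is the linear independence of $\psi_+,\psi_-$ just established. Finally, the refined bound \eqref{PropConstrucionKernelEq3} on the bounded side follows from Lemma \ref{LemmaDecayDerivativeJacobi}: on that half-line $h_i$ is a one-sided bounded solution of an equation of the form \eqref{AE} with $p=\rho$, $q=\rho Q$ meeting \eqref{StudyKernelJacobiOpEq1}--\eqref{StudyKernelJacobiOpEq4}, so $|h_i'(\bs{s})|\le C(1+|\bs{s}|)^{-1-\alpha}$, which is \eqref{PropConstrucionKernelEq3}. I expect the main obstacle to be the bookkeeping in this last step — correctly pinning down which one-sided solution has linear growth on the \emph{other} end and checking that nondegeneracy forbids the exceptional configuration — rather than any of the earlier, essentially mechanical, changes of variable and decay estimates.
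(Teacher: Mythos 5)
Your proposal is correct in substance, and it takes a genuinely different route to the canonical second–order form than the paper does. The paper applies the Liouville change of \emph{dependent} variable $h(\bs{s})=a(\bs{s},0)^{-1/2}u(\bs{s})$, which produces $u''-\tilde q u=0$ with $\tilde q=\partial_{\bs{t}\bs{t}}a/a-2k^2+\tfrac12\partial_{\bs{s}\bs{s}}a/a-\tfrac14|\partial_{\bs{s}}a/a|^2$, and then applies Lemma~\ref{LemmaBehaviourKernel} on each half–line. You instead put the operator in Sturm--Liouville form with $p=a(\cdot,0)$, $q=a(\cdot,0)Q$, and change the \emph{independent} variable via $\sigma=\int_0^{\bs{s}}a(\xi,0)^{-1}d\xi$, landing on $\ddot h-\tilde q h=0$ with $\tilde q=\rho^2 Q\circ\Phi^{-1}$. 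Both are legitimate ways to kill the first–order term, both rely on the stabilization $a(\pm\infty,0)>0$ to keep $\tilde q$ within the hypotheses of Lemma~\ref{LemmaBehaviourKernel}, and the final matching argument you give (the bounded-on-the-left and bounded-on-the-right solutions $\psi_\mp$ are linearly independent by nondegeneracy, hence each must grow linearly on its opposite end) is the same mechanism as the paper's $\alpha_3\neq 0$ argument, phrased slightly more cleanly.

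One inaccuracy to fix: you assert that $|\rho'(\bs{s})|=|\partial_{\bs{s}}a(\bs{s},0)|\le C(1+|\bs{s}|)^{-1-\alpha}$ is ``stronger than needed'' for \eqref{StudyKernelJacobiOpEq3}, but \eqref{StudyKernelJacobiOpEq3} asks for $(1+|\bs{s}|)^{2+\alpha}|p'(\bs{s})|\le C$, i.e.\ decay of order $(1+|\bs{s}|)^{-2-\alpha}$; what \eqref{IntrodEq14} gives on $\rho'$ is one power \emph{weaker}, not stronger. This does not ultimately undermine your last step, because in the proof of Lemma~\ref{LemmaDecayDerivativeJacobi} the decay of $p'$ enters only to establish $h'(\pm\infty)=0$ (for which $\int_0^{\bs{s}}\int_\xi^\infty|p'|=o(\bs{s})$ already suffices, and $(1+|\bs{s}|)^{-1-\alpha}$ with $\alpha>0$ is enough), while the final rate $|h'(\bs{s})|\le C(1+|\bs{s}|)^{-1-\alpha}$ comes purely from the decay of $q$. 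But as written, the claim that the hypotheses of the lemma are met ``with room to spare'' is false, and you should instead say explicitly that only the weaker property is available and that it still suffices for the part of the lemma's proof where $p'$ appears.
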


\begin{proof}
We look for solutions $h(s)=a(s,0)^{-1/2}\cdot u(s)$ to
\eqref{JacobiOperator}, which means that $u$ solves the auxiliary
equation
$$
u''(\bs{s})-\tilde{q}(\bs{s})u(\bs{s})=0,\quad\hbox{in
}\R\label{StudyKernelJacobiOpEq20}
$$

where
\begin{align*}
\tilde{q}(\bs{s}):
&=\frac{\partial_{\bs{t}\bs{t}}a(\bs{s},0)}{a(\bs{s},0)}-2k^2(\bs{s})
+\frac{1}{2}\frac{\partial_{\bs{s}\bs{s}}a(\bs{s},0)}{a(\bs{s},0)}
-\frac{1}{4}\left|\frac{\partial_{\bs{s}}a(\bs{s},0)}{a(\bs{s},0)}\right|^2
\end{align*}
Now, thanks to the assumptions we have made on $a(\bs{s},\bs{t})$
and $\Gamma$, it follows that
$$
(1+|\bs{s}|)^{2+\alpha}|\tilde{q}(\bs{s})|\leq C.
$$

Therefore, applying lemma \ref{LemmaBehaviourKernel} on the region
$[0,\infty)$, we deduce the existence of two solutions linearly
independent of equation~\eqref{StudyKernelJacobiOpEq20} in $\R$,
denoted by $u(\bs{s})$ and $\tilde{u}(\bs{s})$, which satisfies the
right-sided asymptotic behavior as $s\to+\infty$
\begin{align*}
u(\bs{s})&=s+\mathit{O}(1)+\mathit{O}(|\bs{s}|^{1-\alpha}),\qquad
\tilde{u}(\bs{s})=1+
\mathit{O}(|\bs{s}|^{-1}+|\bs{s}|^{-\alpha})\\
u'(\bs{s})&=1+\mathit{O}(|\bs{s}|^{-1}+|\bs{s}|^{-\alpha}),\qquad
\tilde{u}'(\bs{s})=\mathit{O}(|\bs{s}|^{-1}+|\bs{s}|^{-1-\alpha})
\end{align*}
Applying Lemma \ref{LemmaBehaviourKernel} again, but this time on
the region $(-\infty,0]$, we obtain two other solutions $v(s)$ and
$\tilde{v}(s)$ linearly independent of
equation~\eqref{StudyKernelJacobiOpEq20} in $\R$, that now satisfy
the left-sided asymptotic behavior as $s\to-\infty$
\begin{equation*}
\begin{aligned}
v(\bs{s})&=|\bs{s}|+\mathit{O}(1)+\mathit{O}(|\bs{s}|^{1-\alpha}),\qquad
\tilde{v}(\bs{s})=1+\mathit{O}(|\bs{s}|^{-1}+|\bs{s}|^{-\alpha})\\
v'(\bs{s})&=1+\mathit{O}(|\bs{s}|^{-1}+|\bs{s}|^{-\alpha}),\qquad
\quad\tilde{v}'(\bs{s})=\mathit{O}(|\bs{s}|^{-1}+|\bs{s}|^{-1-\alpha})
\end{aligned}
\end{equation*}

We remark  that the non-degeneracy of curve $\Gamma$, implies that
$\tilde{u}(\bs{s})$ cannot be bounded on $(-\infty,0]$. Recall also
that $\{u,\tilde{u}\}$ and $\{v,\tilde{v}\}$ represent two different
basis of the vector space of solutions to the
equation~\eqref{StudyKernelJacobiOpEq20}. So that, for some constats
 $\alpha_i$, for $i=1,\ldots,4$, we have that
\begin{equation*}
\forall \bs{s}\in\R:\quad u(\bs{s})=\alpha_1
v(\bs{s})+\alpha_2\tilde{v}(\bs{s}), \quad
\tilde{u}(\bs{s})=\alpha_3 v(\bs{s})+\alpha_4\tilde{v}(\bs{s})
\end{equation*}

From the previous discussion about $\tilde{u}$, we observe that not
only that $\tilde{u}$ grows at most at a linear rate on
$(-\infty,0]$, but also that the non-degeneracy property implies
$\alpha_3\neq 0$. Hence, the function
$h_1(\bs{s}):=\alpha_3^{-1}a(\bs{s},0)^{-1/2}\tilde{u}(\bs{s})$
belongs to the kernel of $\J_{a,\Gamma}$, satisfying
\eqref{PropConstrucionKernelEq2}-\eqref{PropConstrucionKernelEq3}
for $i=1$.

\medskip
Likewise, the same argument can be applied to $\tilde{v}(\bs{s})$ to
find the function $h_2(\bs{s}):=a(\bs{s},0)^{-1/2}u(\bs{s})$
behaving as predicted and clearly, being linear independent with
$h_1$. This completes the proof of the proposition.
\end{proof}

Once we have described the kernel of \eqref{JacobiOperator}, it is
straightforward to check the following proposition, whose proof is
left to the readers.

\begin{prop}\label{InvertibilityJacobiOperator}
Under the same set of assumptions as in proposition
\ref{PropConstructionKernel} and given $\alpha>0$, $\lambda\in(0,1)$
and a function $f$ with
$\|f\|_{C^{0,\lambda}_{2+\alpha,*}(\R)}<\infty$, then the equation
$$
\J_{a}[h](\bs{s})=f(\bs{s}), \quad \bs{s}\in \R
$$

has a unique bounded solution, given by the variation of parameters
formula
\begin{align*}
h(\bs{s})=-h_1(\bs{s})\int^{\bs{s}}_{-\infty}
a(\xi,0)h_2(\xi)f(\xi)d\xi-h_2(\bs{s})\int^{+\infty}_{\bs{s}}
a(\xi,0)h_1(\xi)f(\xi)d\xi 
\end{align*}
In addition, there is some positive constant $C=C(a,\Gamma,\alpha)$
such that
\begin{align*}
\|h\|_{C^{2,\lambda}_{2 + \alpha,*}(\R)}\leq
C\|f\|_{C^{0,\lambda}_{2+\alpha,*}(\R)}
\end{align*}
where
$$
\|h\|_{C^{2,\lambda}_{2 + \alpha,*}(\R)}:=\|h\|_{L^{\infty}(\R)}+
\|(1+|s|)^{1+\alpha}h'\|_{L^{\infty}(\R)}+
\sup_{s\in\R}(1+|\bs{s}|)^{2+\alpha}\|h''\|_{C^{0,\lambda}(\bs{s}-1,
\bs{s}+1)}
$$
\end{prop}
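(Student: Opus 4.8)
The plan is to establish \textbf{uniqueness and existence} of a bounded solution via the classical variation of parameters formula, using the two kernel elements $h_1, h_2$ built in Proposition \ref{PropConstructionKernel}, and then to establish the a priori estimate by carefully bounding the two integral operators against the weighted norm $\|\cdot\|_{C^{0,\lambda}_{2+\alpha,*}}$.

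First, I would record the \emph{structural facts} from Proposition \ref{PropConstructionKernel}: $h_1$ is bounded as $\bs{s}\to+\infty$ (with $h_1, (1+|\bs{s}|^{1+\alpha})h_1'$ controlled there) and grows linearly, $h_1(\bs{s})=|\bs{s}|+\mathit{O}(1)$, as $\bs{s}\to-\infty$; symmetrically for $h_2$ with the roles of $\pm\infty$ swapped. Since $\J_{a,\Gamma}$ is in divergence form $\tfrac{1}{a(\bs{s},0)}\tfrac{d}{d\bs{s}}\!\left(a(\bs{s},0)\tfrac{d}{d\bs{s}}h\right)-Q(\bs{s})h$, a direct computation shows the Wronskian-type quantity $a(\bs{s},0)\big(h_1 h_2' - h_1' h_2\big)$ is constant in $\bs{s}$; one checks this constant is nonzero (it cannot vanish, else $h_1, h_2$ were proportional, contradicting their different asymptotics), so after normalizing we may assume $a(\bs{s},0)\big(h_1 h_2' - h_1' h_2\big)\equiv 1$. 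With this normalization, the stated formula
\begin{align*}
h(\bs{s})=-h_1(\bs{s})\int^{\bs{s}}_{-\infty} a(\xi,0)h_2(\xi)f(\xi)\,d\xi-h_2(\bs{s})\int^{+\infty}_{\bs{s}} a(\xi,0)h_1(\xi)f(\xi)\,d\xi
\end{align*}
is the usual variation of parameters ansatz, and differentiating twice confirms $\J_a[h]=f$ (the cross terms cancel by the Wronskian relation). The integrals converge absolutely: near $-\infty$, $a h_2 f$ decays like $|\xi|\cdot|\xi|^{-2-\alpha}=|\xi|^{-1-\alpha}$ (since $h_2$ grows linearly there and $\|f\|_{C^{0,\lambda}_{2+\alpha,*}}<\infty$ forces $|f(\xi)|\lesssim (1+|\xi|)^{-2-\alpha}$), which is integrable as $\alpha>0$; near $+\infty$, $h_1$ is bounded so $a h_1 f\lesssim |\xi|^{-2-\alpha}$, again integrable.

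Next, the \textbf{$L^\infty$ bound}: split $\R=(-\infty,0]\cup[0,+\infty)$ and on $[0,+\infty)$ use that $h_1$ is bounded while $h_2(\bs{s})\lesssim 1+|\bs{s}|$; the first term is $\lesssim \int_{-\infty}^{\bs{s}}(1+|\xi|)^{-1-\alpha}d\xi\,\|f\|\lesssim \|f\|$, and the second is $(1+|\bs{s}|)\int_{\bs{s}}^{+\infty}(1+|\xi|)^{-2-\alpha}d\xi\,\|f\|\lesssim (1+|\bs{s}|)(1+|\bs{s}|)^{-1-\alpha}\|f\|\lesssim\|f\|$; the region $(-\infty,0]$ is symmetric. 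For the \textbf{weighted first-derivative bound}, differentiate the formula — the terms where the derivative hits the integral bounds cancel, leaving $h'(\bs{s})=-h_1'(\bs{s})\int_{-\infty}^{\bs{s}}a h_2 f-h_2'(\bs{s})\int_{\bs{s}}^{+\infty}a h_1 f$ — and run the same splitting, using $h_1'$ bounded on $[0,\infty)$ and $|h_2'|\lesssim 1$ there, which gives $|h'(\bs{s})|\lesssim (1+|\bs{s}|)^{-1-\alpha}\|f\|$ after the $\xi$-integrations; combined this is the $\|(1+|s|)^{1+\alpha}h'\|_{L^\infty}$ part of the norm. Finally the \textbf{weighted $C^{0,\lambda}$ bound on $h''$}: from $h''=-\tfrac{\partial_{\bs{s}}a}{a}h'+Q h+f$ pointwise, and on any interval $(\bs{s}-1,\bs{s}+1)$ estimate the Hölder seminorm of each term — $f$ contributes via its defining norm, $Q h$ via $|Q|\lesssim(1+|\bs{s}|)^{-2-\alpha}$ together with the just-proved bounds on $h$ and the Hölder regularity of $Q$ from \eqref{StudyKernelJacobiOpEq4}, and $\tfrac{\partial_{\bs s}a}{a}h'$ similarly using \eqref{IntrodEq14} and the weighted $h'$ bound — to obtain $\sup_{\bs{s}}(1+|\bs{s}|)^{2+\alpha}\|h''\|_{C^{0,\lambda}(\bs{s}-1,\bs{s}+1)}\lesssim\|f\|_{C^{0,\lambda}_{2+\alpha,*}}$.

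The \emph{main obstacle} is the bookkeeping in the weighted estimates: one must track the interplay between the linear growth of one kernel element and the boundedness of the other on each half-line, verify that the weight $(1+|\bs{s}|)^{2+\alpha}$ in the source is exactly what makes every integral converge with room to spare (this is where $\alpha>0$, not merely $\alpha>-1$, is essential for the second integral to beat the linear growth factor), and confirm uniqueness — which follows because any two bounded solutions differ by a bounded element of $\ker\J_{a,\Gamma}$, and nondegeneracy of $\Gamma$ forces that difference to be zero. None of the individual computations is deep, but assembling the $C^{2,\lambda}_{2+\alpha,*}$ bound requires handling the Hölder seminorm of $h''$ through the equation itself rather than by differentiating the integral formula, which is the one point needing a little care.
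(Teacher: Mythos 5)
Your proposal is correct and follows exactly the route the paper's statement invites: the variation-of-parameters formula built from the two kernel elements $h_1,h_2$ of Proposition \ref{PropConstructionKernel}, together with the Wronskian identity $a(\bs{s},0)(h_1 h_2'-h_1'h_2)\equiv$ const (nonzero by linear independence, then normalized to $1$) to verify $\J_a[h]=f$, followed by weighted estimates on each half-line using the one-sided boundedness/linear growth of $h_1,h_2$ and uniqueness via nondegeneracy. The paper itself declares this proof ``left to the readers,'' so you are supplying the argument the authors implicitly intend. One small inaccuracy worth fixing: when you justify absolute convergence of $\int_{-\infty}^{\bs{s}} a\,h_2\,f$ near $-\infty$ you say ``$h_2$ grows linearly there,'' but this contradicts your own (correct) earlier statement that $h_2$ is bounded as $\bs{s}\to-\infty$ and grows at $+\infty$. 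The integrand near $-\infty$ actually decays like $(1+|\xi|)^{-2-\alpha}$, even better than you claim, so the conclusion stands; the linear growth of $h_2$ only enters on the positive half-line, where the cancellation $(1+\bs{s})\cdot(1+\bs{s})^{-1-\alpha}\lesssim 1$ is precisely where $\alpha>0$ is used. Your remark that the Hölder seminorm of $h''$ should be read off from the equation $h''=f-\tfrac{\partial_{\bs{s}}a}{a}h'+Qh$ rather than from a second differentiation of the integral formula is the right move; note that this requires you to first establish the pointwise (weighted $L^\infty$) bound on $h''$ from that identity, so that $[h']_{C^{0,\lambda}(\bs{s}-1,\bs{s}+1)}\lesssim\|h''\|_{L^\infty(\bs{s}-1,\bs{s}+1)}$ is available when you estimate the seminorm of $\tfrac{\partial_{\bs{s}}a}{a}h'$ — there is no circularity, but the order of the two steps matters.
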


\section{Examples:} \label{sec:examples}

It is of interest to mention that Theorem~\ref{TheoremMainResult}
relies on a very important fact, whose nature is essentially
geometrical. This concerns the existence of a curve
$\Gamma\subset\R^2$, given a fixed suitable potential $a(x)$ of the
Allen-Cahn equation~\eqref{IntrodEq12}, that be a stationary curve
with respect to the weighted length functional $l_{a,\Gamma}$ in the
sense of \eqref{IntrodEq13} and also be a non-degenerate curve as in
definition \eqref{IntrodEqExtra}. To get a better understanding of
the geometrical settings of this problem, it would be useful to
present some examples that portray the nature of the curves and of
the potentials we are thinking of and how they interact in order for
this configuration to be admissible.

The following section is devoted to provide concrete examples of
such curves associated to some nontrivial potential $a(x)$, in a way
that they meet all the hypothesis of
Theorem~\ref{TheoremMainResult}.

\medskip
\subsection{Characterization of Non-degeneracy.} Now we
state a result that provides precise conditions on $a$ and $\Gamma$,
where in such case the non-degeneracy property of the curve holds.

\begin{coro}{\label{CorollaryNondegeneracy}
\textsf{Non-degeneracy in the minimizing case}\\
Let $\alpha>-1/2$, $\Gamma$ be a stationary curve with respect to
$l_{a,\Gamma}$ as in \eqref{IntrodEq13}, and let the potential
$a(\bar{s},t)$ along with $\Gamma$ be such that
$Q(\bar{s}):=\partial_{tt}a(\bar{s},0)/a(\bar{s},0)-2k^2(\bar{s})$
satisfies the following conditions
\begin{align}
&Q(\bar{s})\geq 0,\quad \text{ and }\quad Q(\bar{s})\not\equiv 0\label{Corollary1Hyp1}\\
\intertext{and the asymptotic polinomial decay} &|Q(\bar{s})|\leq
\frac{C}{(1+|\bar{s}|)^{2+\alpha}},\quad\text{ for
}\quad|\bar{s}|>>\bar{s}_0\label{Corollary1Hyp2}
\end{align}
then the curve $\Gamma$ is non-degenerate, in the sense of
\eqref{IntrodEqExtra}. }
\end{coro}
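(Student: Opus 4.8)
The plan is to show that under the stated sign and decay hypotheses on $Q$, any bounded solution $h$ of $\J_{a,\Gamma}[h]=0$ must vanish identically. First I would reduce the variable-coefficient Jacobi operator to a pure Schr\"odinger form by the same substitution used in Proposition \ref{PropConstructionKernel}: writing $h(\bs{s})=a(\bs{s},0)^{-1/2}u(\bs{s})$, the equation $\J_{a,\Gamma}[h]=0$ becomes $u''(\bs{s})-\tilde q(\bs{s})u(\bs{s})=0$ with the explicit $\tilde q$ displayed in that proof. A bounded $h$ corresponds to a bounded $u$ (since $a$ is bounded above and below by \eqref{smoothnessfora}). By Lemma \ref{LemmaBehaviourKernel}, applied on $[0,\infty)$ and on $(-\infty,0]$, the space of solutions of $u''-\tilde q\,u=0$ is spanned on each half-line by one linearly growing solution and one that tends to a constant; consequently a \emph{globally} bounded $u$ must, near $+\infty$ and near $-\infty$, agree (up to scalars) with the bounded branch, and moreover by Lemma \ref{LemmaDecayDerivativeJacobi} its derivative decays like $|\bs{s}|^{-1-\alpha}$, which for $\alpha>-1/2$ is integrable; hence $u'(\pm\infty)=0$ and $u(\pm\infty)$ are finite real numbers.

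The heart of the argument is then a variational/Picone-type identity. Multiplying $\J_{a,\Gamma}[h]=0$ by $a(\bs{s},0)h(\bs{s})$ and integrating over $[-R,R]$, one obtains, after integration by parts,
\begin{align*}
\big[a(\bs{s},0)h(\bs{s})h'(\bs{s})\big]_{-R}^{R}
=\int_{-R}^{R}\Big(a(\bs{s},0)|h'(\bs{s})|^2+a(\bs{s},0)Q(\bs{s})|h(\bs{s})|^2\Big)d\bs{s},
\end{align*}
which is exactly the second variation $l''_{\Gamma}(h,h)$ up to the boundary term. The decay $|h'(\bs{s})|\le C|\bs{s}|^{-1-\alpha}$ together with boundedness of $h$ and of $a$ forces the boundary term to vanish as $R\to\infty$ (here $\alpha>-1/2$, indeed $\alpha>-1$, is what is needed). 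Therefore
\begin{align*}
\int_{-\infty}^{+\infty}a(\bs{s},0)|h'(\bs{s})|^2 d\bs{s}
+\int_{-\infty}^{+\infty}a(\bs{s},0)Q(\bs{s})|h(\bs{s})|^2 d\bs{s}=0.
\end{align*}
Since $a(\bs{s},0)>0$ and $Q\ge 0$ by \eqref{Corollary1Hyp1}, both integrands are nonnegative, so both integrals vanish. The first gives $h'\equiv 0$, i.e. $h\equiv c$ constant; the second then gives $c^2\int a(\bs{s},0)Q(\bs{s})d\bs{s}=0$, and since $Q\ge 0$ with $Q\not\equiv 0$ this integral is strictly positive, forcing $c=0$. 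Hence $h\equiv 0$ and $\Gamma$ is non-degenerate.

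The main obstacle is the justification of the vanishing of the boundary term $a(\bs{s},0)h(\bs{s})h'(\bs{s})$ at $\pm\infty$: this is where the asymptotic analysis genuinely enters, and it relies on combining the constancy of $a(\bs{s},0)$ at infinity (one should note the decay hypothesis \eqref{Corollary1Hyp2} on $Q$ guarantees $\tilde q$ satisfies the hypotheses of Lemmas \ref{LemmaDecayDerivativeJacobi} and \ref{LemmaBehaviourKernel}, with $2+\alpha>1$ precisely when $\alpha>-1$) with the derivative-decay estimate of Lemma \ref{LemmaDecayDerivativeJacobi}. Once a bounded kernel element is known a priori to have $h'(\bs{s})=\mathit{O}(|\bs{s}|^{-1-\alpha})$ and $h=\mathit{O}(1)$, the product is $\mathit{O}(|\bs{s}|^{-1-\alpha})\to 0$, and the rest is the elementary positivity argument above. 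One small point to check carefully is that the substitution $h=a^{-1/2}u$ does not spoil integrability of $h'$ at infinity, which is immediate from \eqref{IntrodEq14} since $\partial_{\bs s}a(\bs s,0)=\mathit{O}(|\bs s|^{-1-\alpha})$ and $a(\bs s,0)$ is bounded away from $0$.
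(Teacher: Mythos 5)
Your proposal is correct and follows essentially the same route as the paper: obtain the decay $|h'(\bs{s})|\lesssim (1+|\bs{s}|)^{-1-\alpha}$ from Lemma \ref{LemmaDecayDerivativeJacobi}, multiply $\J_{a,\Gamma}[h]=0$ by $a(\bs{s},0)h(\bs{s})$, integrate by parts, observe the boundary term vanishes thanks to that decay and the boundedness of $h$ and $a$, and then use $Q\geq 0$, $Q\not\equiv 0$ to force first $h'\equiv 0$ and then $h\equiv 0$. One small caveat: the invocation of Lemma \ref{LemmaBehaviourKernel} in your opening paragraph is superfluous (and strictly speaking that lemma requires $\alpha>0$, stronger than the corollary's $\alpha>-1/2$); fortunately the decisive integration-by-parts step relies only on Lemma \ref{LemmaDecayDerivativeJacobi}, which holds for $\alpha>-1$, exactly as in the paper's proof.
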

\begin{proof}
Let $h$ be a bounded element in the kernel of the Jacobi operator,
so that $h\in L^{\infty}(\R)$ solves $\J_{a}[h](\bar{s})=0$ in $\R$.
Lemma \ref{LemmaDecayDerivativeJacobi} assures the existence of a
constant $C>0$ such that
\begin{align}
|h'(\bar{s})|\leq \frac{C}{1+|\bar{s}|^{1+\alpha}},\quad \forall
\bar{s}\in\R\label{StudyKernelJacobiOpEq25}
\end{align}
which implies $h'\in L^2(\R)$, as $\alpha>-1/2$. Furthermore, condition \eqref{Corollary1Hyp2} plus $\alpha>-1/2$
imply the integrability of $Q(s)$ in $\R$ since
$$ \int_{\R}|Q(\bar{s})|d\bar{s}=\int_{|\bar{s}|>\bar{s}_0}Q(\bar{s})d\bar{s}
+\int_{|\bar{s}|\leq \bar{s}_0}Q(\bar{s})d\bar{s}\leq
C\int_{|\bar{s}|>\bar{s}_0}
\frac{d\bar{s}}{1+|\bar{s}|^{2+\alpha}}+\int_{|\bar{s}|\leq
\bar{s}_0}Q(\bar{s})d\bar{s}$$ In particular, the foregoing
guarantees that the following expression is well defined
\begin{align*}
\|h\|_{Q}&:=\langle \J_{a}[h], h\rangle_{L^2(a(s,0))}= \int_{\R}[h''(\bar{s})+\frac{\partial_{\bs{s}}a(\bar{s},0)}{a(\bar{s},0)}h'(\bar{s})-Q(\bar{s})h(\bar{s})]h(\bar{s})a(\bar{s},0)d\bar{s}\\
&=
a(\bar{s},0)h'(\bar{s})h(\bar{s})\biggr\rvert^{\bar{s}=+\infty}_{\bar{s}
=-\infty}-\int_{\R}a(\bar{s},0)[|h'(\bar{s})|^2+Q(\bar{s})h^2(\bar{s})]d\bar{s}
\end{align*}
where the boundary value terms at infinity vanishes, as $h$ is
bounded and from the decay \eqref{StudyKernelJacobiOpEq25}. Finally
as $h$ solves $\J_{a}[h]=0$, the latter together with hypothesis
\eqref{Corollary1Hyp1} imply
$$ 0=\int_{\R}a(\bar{s},0)[|h'(\bar{s})|^2+Q(\bar{s})h^2(\bar{s})]d\bar{s}\geq \int_{\R}a(\bar{s},0)|h'(\bar{s})|^2d\bar{s}$$
As $a(\bar{s},0)>0$, we deduce that $h'=0$ a.e. in $\R$. Moreover,
the smoothness of $h$ guarantees that $h'\equiv 0$ in $\R$. Using
again the last inequality we get that
$$ 0=\int_{\R}Q(\bar{s})h^2(\bar{s})d\bar{s}$$
However, condition \eqref{Corollary1Hyp1} on $Q$ assures that
$Q(\bar{s})>0$ on a neighborhood of some point $\bar{s}_0\in\R$.
Therefore, last equality gives that $h(\bar{s})=0$ in this
neighborhood, but as $h\equiv C$ is a constant function in the
entire space, we conclude $h=0$, which concludes the proof of
Corollary~\ref{CorollaryNondegeneracy}.
\end{proof}

\medskip
\subsection{Geodesics for $l_{a,\Gamma}$ in Euclidean coordinates.}
In what follows, we will consider a curve $\Gamma$ that can be
represented as the graph of some function. Let us consider a smooth
function $f:\R\to\R$, $f=f(\bs{x})$, and a parametrized curve
$\Gamma:=\{\gamma(\bs{x})/\ \bs{x}\in\R\}\subset\R^2$ such that
\begin{align}
\gamma(\bs{x})=(\bs{x},f(\bs{x})),\quad
\dot{\gamma}(\bs{x})=(1,f'(\bs{x}))\label{ExampleEq0}
\end{align}
In addition, let the normal field $\nu$ of $\Gamma$ be oriented
negatively, meaning that vector $\dot{\gamma}(\bs{x}) \wedge \nu(\bs{x})$ points in the opposite direction than $e_3$, the
generator of the $z-$axis in $\R^3$. This forces
$$ \nu(\bs{x})=\dfrac{1}{\sqrt{1+|f'(\bs{x})|^2}}(f'(\bs{x}),-1)$$
Let us also consider a potential defined in Euclidean coordinates
$a=a(\bs{x},\bs{y})$.
Recall from the criticality condition \eqref{IntrodEq13}, that in
order for $\Gamma$ to be a stationary curve with respect to the
weighted arc-length $l_{a,\Gamma}$, is necessary that the potential
$a$ and the curvature $k$ satisfy the equation
\begin{align}
\partial_{\bs{t}}a(\bs{s},0)=k(\bs{s})\cdot a(\bs{s},0)\ , \;\;  \text{ a.e. } \bs{s}\in \R \label{ExampleEq1}
\end{align}
Denoting $X(\bs{x},\bs{t}):=\gamma(\bs{x})+\bs{t}\nu(\bs{x})$, we
can now set the potential written in this coordinates as
\begin{align}
\tilde{a}(\bs{x},\bs{t}):=a\circ
X(\bs{x},\bs{t})=a\left(\bs{x}+\dfrac{\bs{t}f'(\bs{x})}{\sqrt{1+|f'(\bs{x})|^2}},\
f(\bs{x})-\dfrac{\bs{t}}{\sqrt{1+|f'(\bs{x})|^2}}\right)\label{ExampleEq2}
\end{align}
Accordingly, relation \eqref{ExampleEq2} implies that the
criticality condition \eqref{ExampleEq1} amounts to the following
equation in Euclidean coordinates
\begin{align}
\frac{\partial_{\bs{x}}a(\bs{x},f(\bs{x}))
f'(\bs{x})}{\sqrt{1+|f'(\bs{x})|^2}} - \frac{\partial_y
a(\bs{x},f(\bs{x}))}{\sqrt{1+|f'(\bs{x})|^2}}
=\frac{f''(\bs{x})}{(1+|f'(\bs{x})|^2)^{3/2}}\cdot
a(\bs{x},f(\bs{x}))\label{CriticalityEuclideanCoordinates}
\end{align}
where it has been used the classical formula for the curvature of
$\Gamma$ as given in \eqref{ExampleEq0},
$$k(\bs{x})=f''(\bs{x})(1+|f'(\bs{x})|^2)^{-3/2}$$

\medskip
{\bf Example 1: The $x$-axis. }\label{subsec:FirstExample} We
consider $\Gamma\subset\R^2$ to be the $x$-axis. Nonetheless, the
stationarity of this line must be with respect to some nontrivial
potential $a(\bs{x},\bs{y})\not\equiv 1$ that does not represent the
classic Euclidean metric in $\R^2$, case in which all straight lines
are trivially known as stationary curves.

With this purpose, let us set the function $f(\bs{x})\equiv 0$ in
\eqref{ExampleEq0}, implying that $\Gamma=\overrightarrow{0X}$. In
particular, we have that $\nu(\bs{x})\equiv e_2$, thus the Fermi
coordinates are reduced simply to the Euclidean coordinates, namely
$X(\bs{x},\bs{t})=\bs{x}e_1+\bs{t}e_2=(\bs{x},\bs{t})$.

In this simplified context, it turns out that the criticality
condition \eqref{CriticalityEuclideanCoordinates} is reduced to
\begin{align}
\partial_{\bs{y}} a(\bs{x},0)=0,\quad \forall
\bs{x}\in\R.\label{ExampleEq4}
\end{align}

Therefore, we only need to find a nontrivial potential
$\tilde{a}(\bs{x},\bs{t})=a(\bs{x},\bs{y})$ in such way the $x$-axis
becomes a stationary curve, and also a nondegenerate curve as in the
sense of~\ref{IntrodEqExtra}.

\begin{claim}{\label{ClaimPotential1}
Given any $\alpha>0$, the following potential
\begin{align}
a(\bs{x},\bs{y}):=1\,+\,\frac{1}{(1+|\bs{x}|)^{2+\alpha}}\cdot\left(
\frac{\bs{y}^2}{\cosh(\bs{y})}\right)\label{ExampleA1}
\end{align}
satisfies all the requirements previously indicated, in relation
with the curve $\Gamma=\overrightarrow{0X}$. See Figure
\ref{fig:PotentialExample1}.
}\end{claim}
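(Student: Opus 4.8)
The plan is to check, one item at a time, that the potential $a$ of \eqref{ExampleA1} fulfils \eqref{smoothnessfora}, the curvature decay \eqref{IntrodEq15}, the non-parallelism \eqref{nonparallelism}, the criticality condition \eqref{ExampleEq4}, the decay \eqref{IntrodEq14}, and the non-degeneracy of $\Gamma$, the last being obtained from Corollary \ref{CorollaryNondegeneracy}. It is convenient to write $a(\bs{x},\bs{y}) = 1 + \rho(\bs{x})\,\psi(\bs{y})$ with $\rho(\bs{x}):=(1+|\bs{x}|)^{-(2+\alpha)}$ and $\psi(\bs{y}):=\bs{y}^2\sech\bs{y}$; replacing the factor $1+|\bs{x}|$ by the smooth surrogate $(1+\bs{x}^2)^{1/2}$ — which affects none of the asymptotic estimates below — makes $\rho$, and hence $a$, of class $C^{\infty}(\R^2)$. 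Since $\cosh$ grows exponentially, $\psi$ together with all of its derivatives is bounded on $\R$, and $\psi\ge 0$ with equality only at the origin; consequently $0<m\le a\le 1+\|\psi\|_{L^{\infty}(\R)}=:M$ for any $m\in(0,1)$, so $a\in C^5(\R^2)$ and $a\not\equiv 1$. This is \eqref{smoothnessfora}.

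Next I record the geometry. Here $\Gamma$ is the $x$-axis, $\gamma(\bs{s})=(\bs{s},0)$ is an arclength parametrization, the signed curvature vanishes identically, so \eqref{IntrodEq15} holds trivially; the normal is $\nu\equiv e_2$ and the Fermi map $X(\bs{s},\bs{t})=(\bs{s},\bs{t})$ is the identity and globally a diffeomorphism, so although $\dot\gamma_+\cdot\dot\gamma_-=1$ — the degenerate instance of \eqref{nonparallelism} — this plays no role, the coordinate neighbourhood being all of $\R^2$. Because $k\equiv 0$, the criticality condition reduces by \eqref{ExampleEq4} to $\partial_{\bs{y}}a(\bs{x},0)=0$; since $\partial_{\bs{y}}a=\rho(\bs{x})\psi'(\bs{y})$ and $\psi$ has a double zero at the origin (from $\sech\bs{y}=1-\bs{y}^2/2+\cdots$ one gets $\psi(\bs{y})=\bs{y}^2+\mathit{O}(\bs{y}^4)$, hence $\psi'(0)=0$), we conclude $\partial_{\bs{y}}a(\bs{x},0)\equiv 0$ and $\Gamma$ is stationary for $l_{a,\Gamma}$.

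For the decay condition \eqref{IntrodEq14} I would differentiate $a=1+\rho\psi$: the first derivatives are $\rho'\psi$ and $\rho\psi'$, the second derivatives are $\rho''\psi$, $\rho'\psi'$ and $\rho\psi''$, and since $|\rho^{(j)}(\bs{x})|\le C_j(1+|\bs{x}|)^{-(2+\alpha)-j}$ while $\psi,\psi',\psi''$ are bounded, every first-order derivative of $a$ is $\mathit{O}((1+|\bs{s}|)^{-(2+\alpha)})$ and every second-order derivative is $\mathit{O}((1+|\bs{s}|)^{-(2+\alpha)})$; in particular both estimates in \eqref{IntrodEq14} hold, with room to spare in the gradient bound.

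Finally, non-degeneracy. Since $\psi(0)=0$ we have $a(\bs{s},0)\equiv 1$ — in particular $a(\pm\infty,0)=1>0$ — so from \eqref{OperatorQ} and $k\equiv 0$ we obtain $Q(\bs{s})=\partial_{\bs{t}\bs{t}}a(\bs{s},0)=\rho(\bs{s})\,\psi''(0)$. From $\psi(\bs{y})=\bs{y}^2-\tfrac12\bs{y}^4+\cdots$ we read off $\psi''(0)=2$, so $Q(\bs{s})=2(1+|\bs{s}|)^{-(2+\alpha)}$, which is everywhere positive, not identically zero, and bounded above by $C(1+|\bs{s}|)^{-(2+\alpha)}$; that is, \eqref{Corollary1Hyp1}–\eqref{Corollary1Hyp2} hold, and since $\alpha>0>-1/2$, Corollary \ref{CorollaryNondegeneracy} yields that $\Gamma$ is non-degenerate with respect to $a$. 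No step is genuinely difficult; the only points deserving care are the smoothing of $|\bs{x}|$ near the origin so as to land literally in $C^5(\R^2)$, and the elementary expansion $\psi(\bs{y})=\bs{y}^2+\mathit{O}(\bs{y}^4)$, which simultaneously produces the criticality relation $\psi'(0)=0$ and the strict positivity of $Q$ needed for \eqref{Corollary1Hyp1}.
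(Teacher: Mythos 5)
Your proof is correct and follows the same overall scheme as the paper's proof of Claim~\ref{ClaimPotential1}: verify smoothness and boundedness, note $k\equiv 0$ and $\partial_{\bs{y}}a(\bs{x},0)=0$, compute $Q(\bs{s})=2(1+|\bs{s}|)^{-(2+\alpha)}$, and conclude via Corollary~\ref{CorollaryNondegeneracy}. You do, however, catch two subtleties that the paper leaves implicit. The factor $(1+|\bs{x}|)^{-(2+\alpha)}$ has a corner at $\bs{x}=0$, so the potential as literally written in \eqref{ExampleA1} is not even $C^1$ at points $(0,\bs{y})$ with $\bs{y}\neq 0$ (the one-sided $\bs{x}$-derivatives disagree there); your substitution of $(1+\bs{x}^2)^{1/2}$ for $1+|\bs{x}|$ is the right repair and leaves every asymptotic estimate untouched. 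Also, the $\bs{x}$-axis has $\dot{\gamma}_+\cdot\dot{\gamma}_-=1$, which strictly speaking violates the inequality in \eqref{nonparallelism}; you correctly observe that this hypothesis is only used to guarantee the Fermi strip $|z|<\delta+c_0|s|$ is a well-defined coordinate region, which for a straight line holds automatically, though it would be cleaner to say that this example satisfies the conclusion that condition is designed to secure rather than the condition itself. Finally, your factorization $a=1+\rho(\bs{x})\psi(\bs{y})$ makes the verification of the decay condition \eqref{IntrodEq14} — which the paper never checks explicitly for this example — entirely mechanical. On balance this is a careful tightening of the paper's argument rather than a genuinely different route.
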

\begin{proof}
Let us note that $a(\bs{x},\bs{y})$ is smooth, globally bounded, and
bounded below far away from zero. Further, it is direct that
$\overrightarrow{0X}$ is a stationary curve relative to
$l_{a,\Gamma}$ since solves equation~\eqref{ExampleEq4}
\begin{align*}
\partial_{\bs{y}}a(\bs{x},\bs{y})= \frac{1}{(1+|\bs{x}|)^{2+\alpha}}\left(\frac{2\bs{y}-\bs{y}^2\sinh(\bs{y})}{\cosh^2(\bs{y})}\right)\quad\Rightarrow\quad
\partial_{\bs{y}}a(\bs{x},0)= 0,\quad \forall \bs{x}\in\R
\end{align*}
Now to see that $\overrightarrow{0X}$ is a nondegenerate curve, just
note that the potential achieves it minimum exactly on the region
defined by the $\bs{x}$-axis, and moreover, around this curve the
potential is strictly convex in the $\bs{y}$-direction. The latter
translates in the fact that $\partial_{\bs{y}\bs{y}}a(\bs{x},0)>0$,
since
\begin{align*}
\partial_{\bs{y}\bs{y}}a(\bs{x},\bs{y})= \frac{1}{(1+|\bs{x}|)^{2+\alpha}}\left(\frac{2-2\bs{y}\sinh(\bs{y})-\bs{y}^2\cosh(\bs{y})}{\cosh^2(\bs{y})}
-\frac{2(2\bs{y}-\bs{y}^2\sinh(\bs{y}))
\sinh(\bs{y})}{\cosh^3(\bs{y})}\right)\\[-0.7cm]
\end{align*}
and
\begin{align*}
\partial_{\bs{y}\bs{y}}a(\bs{x},0)=\frac{2}{(1+|\bs{x}|)^{2+\alpha}}>0,\quad
\forall \bs{x}\in\R
\end{align*}
Taking this into account, note that $a(\bs{x},\bs{y})$ and
$k(\bs{x})\equiv 0$ are such that term
$$Q(\bs{x}):=\frac{\partial_{\bs{y}\bs{y}}a(\bs{x},0)}{a(\bs{x},0)}-2k^2(\bs{x})$$
fulfills the following conditions
$$Q(\bs{x})>0,\quad \text{ and }\quad |Q(\bs{x})|\leq \frac{2}{(1+|\bs{x}|)^{2+\alpha}},\quad \forall \bs{x}\in\R$$

Hence we deduce from Corollary~\ref{CorollaryNondegeneracy} that
$\Gamma=\overrightarrow{0X}$ is a nondegenerate curve with respect
to the potential $a(\bs{x},\bs{y})$ given in \eqref{ExampleA1},
finishing the proof of the claim.
\end{proof}

    \begin{minipage}[b]{\linewidth}
      \centering
      \includegraphics[width=0.52\linewidth]{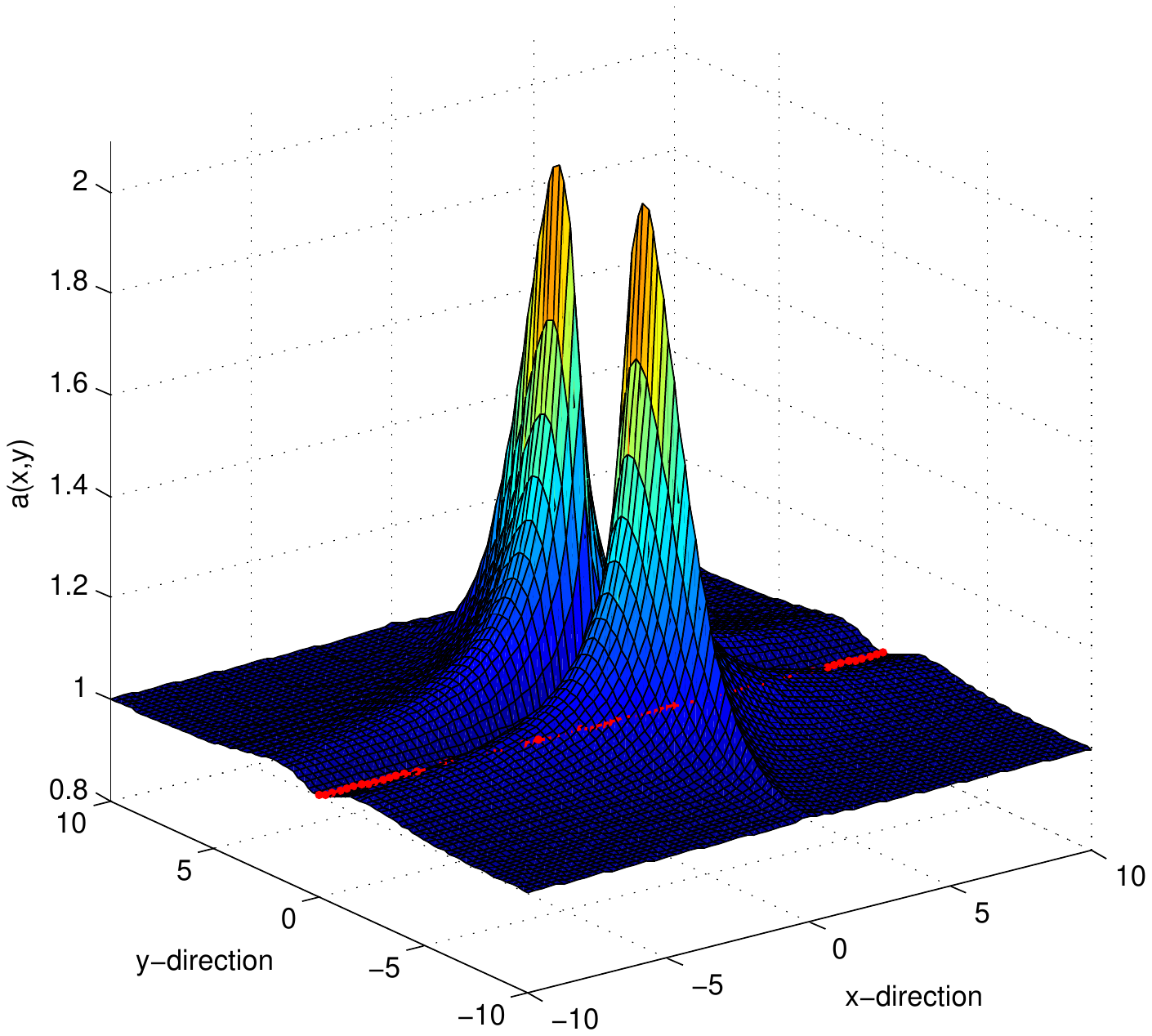}\\[-0.27cm]
      \captionof{figure}[Potential $a(x,y)$ with the $x-$axis as non-degenerate geodesic]
      {Potential $a(\bs{x},\bs{y})$ \eqref{ExampleA1} with geodesic $\Gamma=\vec{0X}$, for $\alpha=0.01$.\label{fig:PotentialExample1}}     \end{minipage}

\medskip
{\bf Example 2: Asymptotic straight line.
}\label{subsec:SecondExample} This time we consider the function
$f(\bs{x}):=\sqrt{1+\omega^2 x^2}$, so that $\Gamma$ converges
asymptotically to straight lines as $|\bs{x}|\to\infty$. We have to
exhibit some nontrivial potential $a(\bs{x},\bs{y})$ for which
$\Gamma$ is a nondegenerate geodesic relative to the arclength
$\int_{\Gamma}a(\vec{x})$.

We will assume a weaker dependence of the potential in Euclidean
variables, namely $a=a(\bs{y})$, so that criticality
condition~\eqref{CriticalityEuclideanCoordinates} amounts to
\begin{align}
&\frac{a'(f(\bs{x}))}{a(f(\bs{x}))}=
\frac{-f''(\bs{x})}{1+|f'(\bs{x})|^2}=g(f(x))\label{ExampleEq5}
\end{align}
with $g(\bs{y}):=-\omega^2[(1+\omega^2)\bs{y}^3-\omega^2\bs{y}]^{-1}$.\\
We can solve directly this ordinary differential
equation~\eqref{ExampleEq5}, for $a$ in $\bs{y}-$variable.
$$ \log(a(\bs{y}))=\int g(\bs{y})d\bs{y}+M \quad \ssi\quad
a(\bs{y})=M\exp\left(\int \frac{-\omega^2
d\bs{y}}{(1+\omega^2)\bs{y}^3-\omega^2\bs{y}}\right)$$ This integral
can be computed using partial fraction decomposition, that leads to
\begin{align*}
a(\bs{y})=\frac{M\sqrt{1+\omega^2}\bs{y}}{\sqrt{(1+\omega^2)\bs{y}^2-\omega^2}}
\end{align*}
For this construction, we will need to consider a slight
modification of function $a$ as follows. We say that the potential
$\hat{a}:\R^2\to\R$ is an \emph{admissible left-extension} of
function $a(x,y)$, provided that
\begin{itemize}
\item $\hat{a}$ be smooth bounded function, of at least $C^2(\R^2)$ class.
\item $\hat{a}(x,y)=a(x,y)$ for points with $y\geq \omega^2/(1+\omega^2)$.
\item $\hat{a}$ is uniformly positive, bounded below away from zero.
\end{itemize}
We state the following
\begin{claim}{\label{ClaimPotential2}
Given $|\omega|\leq 1/\sqrt{2}$, any admissible left-extension of
the potential given below
\begin{align}
a(\bs{x},\bs{y}):=\frac{\sqrt{1+\omega^2}\bs{y}}{\sqrt{(1+\omega^2)\bs{y}^2-\omega^2}}\label{ExampleA2}
\end{align}
induces a metric in $\R^2$ for which
$\Gamma=\left\{(\bs{x},\sqrt{1+\omega^2
\bs{x}^2})\right\}_{\bs{x}\in\R}$ is a nondegenerate geodesic. See
Figure~\ref{fig:PotentialExample2}. }\end{claim}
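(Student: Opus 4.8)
The plan is to verify that the potential $a$ in \eqref{ExampleA2}, or rather any admissible left-extension $\hat a$ of it, satisfies all the hypotheses of Theorem~\ref{TheoremMainResult} when paired with the graph curve $\Gamma=\{(\bs{x},\sqrt{1+\omega^2\bs{x}^2})\}$. There are three things to check: (i) $\Gamma$ is stationary for $l_{\hat a,\Gamma}$, i.e. \eqref{CriticalityEuclideanCoordinates} holds; (ii) the quantitative decay assumptions \eqref{smoothnessfora}--\eqref{IntrodEq14} on $\hat a$ and on the curvature $k$ of $\Gamma$ hold, with some $\alpha>0$; and (iii) $\Gamma$ is nondegenerate, which we will obtain by checking the hypotheses of Corollary~\ref{CorollaryNondegeneracy} on the quantity $Q(\bs{s})=\partial_{\bs{t}\bs{t}}\hat a(\bs{s},0)/\hat a(\bs{s},0)-2k^2(\bs{s})$.

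\emph{Step 1 (stationarity).} By the very derivation of \eqref{ExampleA2}, the function $a(\bs{y})$ was obtained by integrating the ODE \eqref{ExampleEq5}, which is exactly the reduction of the criticality equation \eqref{CriticalityEuclideanCoordinates} under the ansatz $a=a(\bs{y})$. Hence for points on $\Gamma$ (where $\bs{y}=f(\bs{x})=\sqrt{1+\omega^2\bs{x}^2}\geq 1>\omega^2/(1+\omega^2)$, using $|\omega|\le 1/\sqrt2$) one has $\hat a=a$, and \eqref{CriticalityEuclideanCoordinates} holds along the curve. So $\Gamma$ is a geodesic for $l_{\hat a,\Gamma}$; the left-extension is irrelevant here because it only changes $\hat a$ in the region $\bs{y}<\omega^2/(1+\omega^2)$, which never meets $\Gamma$ nor a fixed tubular neighborhood of it.

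\emph{Step 2 (geometric decay).} I would compute the curvature $k(\bs{x})=f''(\bs{x})(1+|f'(\bs{x})|^2)^{-3/2}$ for $f(\bs{x})=\sqrt{1+\omega^2\bs{x}^2}$ explicitly; one finds $f'(\bs{x})=\omega^2\bs{x}/f$, $f''(\bs{x})=\omega^2/f^3$, and $1+|f'|^2=f^{-2}(f^2+\omega^4\bs{x}^2)=f^{-2}(1+\omega^2\bs{x}^2+\omega^4\bs{x}^2)$, which is comparable to $\bs{x}^2$ for large $|\bs{x}|$; this yields $k(\bs{x})=\mathit{O}(|\bs{x}|^{-3})$, and similarly $k'=\mathit{O}(|\bs{x}|^{-4})$, $k''=\mathit{O}(|\bs{x}|^{-5})$, so \eqref{IntrodEq15} holds with any $\alpha\le 1$ (in particular some $\alpha>0$; note the non-parallelism \eqref{nonparallelism} also holds since the two asymptotic directions make a genuine angle for $\omega\ne 0$). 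For \eqref{IntrodEq14} one translates: since arc-length $\bs{s}$ along $\Gamma$ is comparable to $\bs{x}$ for large $|\bs{x}|$, and since $\hat a(\bs{s},\bs{t})=a\circ X(\bs{s},\bs{t})$ with $a=a(\bs{y})$ smooth, bounded, and bounded below on $\bs{y}\geq 1-\delta$, the derivatives $\nabla_{\bs{s},\bs{t}}\hat a$ and $D^2_{\bs{s},\bs{t}}\hat a$ near $\Gamma$ are governed by $a'(f(\bs{x}))f'(\bs{x})$, $a''$, etc.; since $a'(\bs{y})/a(\bs{y})=g(\bs{y})=\mathit{O}(\bs{y}^{-3})$ and $f(\bs{x})\sim|\omega\bs{x}|$, one gets $|\nabla_{\bs{s},\bs{t}}\hat a(\bs{s},0)|=\mathit{O}(|\bs{s}|^{-2})$ and the $D^2$ bound similarly, so \eqref{IntrodEq14} holds for the same $\alpha$; \eqref{smoothnessfora} is clear from the third bullet of admissibility and smoothness of the extension. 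One should also record $a(\pm\infty,0)=\lim_{\bs{y}\to\infty}\sqrt{1+\omega^2}\,\bs{y}/\sqrt{(1+\omega^2)\bs{y}^2-\omega^2}=1>0$, so the stabilization hypothesis of Proposition~\ref{PropConstructionKernel} is met.

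\emph{Step 3 (nondegeneracy).} This is the main point. I would compute $\partial_{\bs{t}\bs{t}}\hat a(\bs{s},0)$ along $\Gamma$. Writing $\hat a(\bs{s},\bs{t})=a(y(\bs{s},\bs{t}))$ with $a=a(\bs{y})$ and $y(\bs{s},\bs{t})=f(\bs{x}(\bs{s}))-\bs{t}/\sqrt{1+|f'|^2}$, one has $\partial_{\bs{t}}y=-1/\sqrt{1+|f'|^2}$ (a constant in $\bs{t}$), hence $\partial_{\bs{t}\bs{t}}\hat a(\bs{s},0)=a''(f(\bs{x}))\cdot (1+|f'(\bs{x})|^2)^{-1}$. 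Differentiating $a(\bs{y})=\sqrt{1+\omega^2}\,\bs{y}\,((1+\omega^2)\bs{y}^2-\omega^2)^{-1/2}$ twice and plugging in $\bs{y}=f(\bs{x})$, one obtains an explicit rational-in-$\bs{x}$ expression for $Q(\bs{x})=\partial_{\bs{t}\bs{t}}\hat a(\bs{s},0)/a(f(\bs{x}))-2k^2(\bs{x})$. The goal is to show $Q\geq 0$, $Q\not\equiv 0$, and $|Q(\bs{x})|\leq C(1+|\bs{x}|)^{-2-\alpha}$ for some $\alpha>-1/2$ — in fact for the $\alpha$ fixed in Step 2 — whereupon Corollary~\ref{CorollaryNondegeneracy} applies and $\Gamma$ is nondegenerate. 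The decay bound is automatic since each term is rational in $\bs{x}$ vanishing to high order at infinity ($a''/a=\mathit{O}(\bs{y}^{-4})$ composed with $f$ gives $\mathit{O}(|\bs{x}|^{-4})$, and $k^2=\mathit{O}(|\bs{x}|^{-6})$, while $\bs{s}\sim\bs{x}$). The sign condition $Q\geq 0$ is the hypothesis $|\omega|\leq 1/\sqrt2$ gets used: I expect that after simplification $Q(\bs{x})$ equals a rational function whose numerator is a polynomial in $\bs{x}^2$ and $\omega^2$ that is nonnegative precisely when $\omega^2\leq 1/2$ (the factor $1-2\omega^2$ or similar appearing as the leading/controlling coefficient), and $Q\not\equiv 0$ because $a$ is genuinely non-constant.

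\textbf{Main obstacle.} The hard part is the bookkeeping in Step 3: computing $a''(\bs{y})$ for the explicit $a$ in \eqref{ExampleA2}, composing with $f(\bs{x})=\sqrt{1+\omega^2\bs{x}^2}$, combining with the $-2k^2$ term over a common denominator, and then factoring the resulting numerator cleanly enough to read off the sign and see where $|\omega|\leq 1/\sqrt2$ enters. One must also be slightly careful that the left-extension issue never interferes — since $\Gamma$ stays in $\{\bs{y}\geq 1\}$ and $1>\omega^2/(1+\omega^2)$ for $|\omega|\le1$, all computations of $Q$ along $\Gamma$ and in a fixed thin tube see only the original $a$, not $\hat a$; the extension exists merely to make $\hat a$ a globally defined smooth bounded potential on all of $\R^2$ as required by \eqref{smoothnessfora}, since the formula \eqref{ExampleA2} blows up as $\bs{y}\downarrow \omega/\sqrt{1+\omega^2}$ and is not even defined for small $\bs{y}$.
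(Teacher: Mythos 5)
Your proposal follows exactly the same route as the paper: verify stationarity from the construction of $a$ as a solution of \eqref{ExampleEq5}, verify the decay hypotheses \eqref{IntrodEq15}--\eqref{IntrodEq14} (the paper records that they hold with $\alpha=2$), observe that the left-extension never affects the computations because $\Gamma$ and a thin tube around it lie in $\{\bs{y}\geq 1\}$, and finally invoke Corollary~\ref{CorollaryNondegeneracy} to deduce nondegeneracy from the sign and the decay of $Q(\bs{x})$. Steps~1 and~2 of your outline are complete and correct, and your observation that $a(\pm\infty,0)=1>0$ is exactly the stabilization hypothesis needed in Proposition~\ref{PropConstructionKernel}.

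The gap is in Step~3, which you yourself flag as ``the main obstacle'': you write that you ``expect'' the numerator of $Q(\bs{x})$ to be controlled by a factor like $1-2\omega^2$, but you do not carry out the algebra that actually establishes $Q\geq 0$. Since this computation is precisely where the hypothesis $|\omega|\leq 1/\sqrt{2}$ enters, and the entire claim hinges on it, it cannot be left as an expectation. To close it, proceed as the paper does: compute $a''(\bs{y})=\dfrac{3\omega^2(1+\omega^2)^{3/2}\bs{y}}{[(1+\omega^2)\bs{y}^2-\omega^2]^{5/2}}$, use $\partial_{\bs{t}\bs{t}}\tilde a(\bs{x},0)=a''(f(\bs{x}))\,\dfrac{1+\omega^2\bs{x}^2}{1+(\omega^2+\omega^4)\bs{x}^2}$ and the explicit expression for $k^2(\bs{x})$, and combine them over a common denominator; after bounding the denominators one obtains
$$
Q(\bs{x})\ \geq\ \frac{C_a\,\omega^2}{(1+\omega^2\bs{x}^2)^{2}}\left(\frac{3}{1+\omega^2}-\frac{2}{1+\omega^2\bs{x}^2}\right),
$$
whose parenthetical factor is increasing in $\bs{x}^2$ and hence minimized at $\bs{x}=0$, where it equals $\dfrac{1-2\omega^2}{1+\omega^2}$ --- nonnegative precisely when $\omega^2\leq 1/2$, which is indeed the factor you anticipated. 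The polynomial decay $Q=\mathit{O}((1+|\bs{x}|)^{-4})$ then follows immediately from these explicit formulas, and Corollary~\ref{CorollaryNondegeneracy} applies. Supplying this estimate would complete the proof.
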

\begin{proof}
Regardless the value of the parameter $\omega\neq 0$, it can be
readily checked that within the region $y\geq
2\omega/\sqrt{1+\omega^2}$, function~\eqref{ExampleA2} is smooth,
bounded, and uniformly positive. Moreover, this potential satisfies
the asymptotic stability on the curve $\Gamma$, since
$f(\bs{x})\to+\infty$ as $|\bs{x}|\to+\infty$ and additionally
$\lim\limits_{\bs{y}\to+\infty}a(\bs{x},\bs{y})=1,\;
\forall\bs{x}\in\R$. The previous construction of $a(\bs{x},\bs{y})$
was intended to build a potential satisfying the criticality
condition~\eqref{ExampleEq5} for the curve generated by
$f(\bs{x})=\sqrt{1+\omega^2 x^2}$. Thus $\Gamma$ is a geodesic for
the arclength $\int_{\Gamma}a(\vec{x})$. All these features of $a$
ensure that any \emph{admissible left-extension} will provide a
potential with the desired properties to induce a smooth metric in
$\R^2$, fulfilling hypothesis~\eqref{IntrodEq13} of
Theorem~\ref{TheoremMainResult}. Moreover, a tedious but simple
calculation shows that
$$a'(\bs{y})=\frac{-\omega^2\sqrt{1+\omega^2}}{(\bs{y}^2+\omega^2(\bs{y}^2-1))^{3/2}},\quad a''(\bs{y})=\frac{3\omega^2(1+\omega^2)^{3/2}\bs{y}}{[(1+\omega^2)\bs{y}^2-\omega^2]^{5/2}}$$
Therefore, taking into account the decay of the derivatives of
$f(\bs{x})$, it follows that this potential satisfies condition
\eqref{IntrodEq14} of Theorem~\ref{TheoremMainResult}, for
$\alpha=2>0$. It only remains to prove the nondegeneracy property of
the curve $\Gamma$, and to do this, we will make use of
Corollary~\ref{CorollaryNondegeneracy}. It can be checked the
positiveness of the term $Q(\bs{x})$, in fact
\begin{align*}
2k^2(\bs{x})&=\frac{2\omega^2}{(1+(\omega^2+\omega^4)\bs{x}^2)^3},\quad
\partial_{\bs{t}\bs{t}}\tilde{a}(\bs{x},0)=a''(f(\bs{x}))\frac{1+\omega^2\bs{x}^2}{1+(\omega^2+\omega^4)\bs{x}^2}
\end{align*}
so by the definition
$Q(\bs{x})=\partial_{tt}\tilde{a}(\bs{x},0)/\tilde{a}(\bs{x},0)-2k^2(\bs{x})$
we obtain
\begin{align*}
Q(\bs{x})
&\geq
C_a\left(\frac{3\omega^2(1+\omega^2)^{3/2}(1+\omega^2\bs{x}^2)^{1/2}}{(1+\omega^2)^{5/2}(1+\omega^2\bs{x}^2)^{5/2}}
-\frac{2\omega^2}{(1+(\omega^2+\omega^4)\bs{x}^2)^3}\right)\\[0.2cm]
&\geq
C_a\left(\frac{3\omega^2}{(1+\omega^2)(1+\omega^2\bs{x}^2)^{2}}
-\frac{2\omega^2}{(1+\omega^2\bs{x}^2)^3}\right)\\
&=\frac{C_a\omega^2}{(1+\omega^2\bs{x}^2)^{2}}\left(\frac{3}{1+\omega^2}
-\frac{2}{1+\omega^2\bs{x}^2}\right).
\end{align*}

Hence, choosing $\omega\in\R\setminus\{0\}$ with $|\omega|\leq
1/\sqrt{2}$, we get that $Q(\bs{x})>0$ in the entire domain $\R$.
Finally, the term $Q(\bs{x})$ decays polynomially at a rate
$\mathit{O}((1+|\bs{x}|)^{-4})$ as a consequence of the decay of the
potential and the squared curvature, which finishes the proof of
Claim~\ref{ClaimPotential2}.
\end{proof}

\begin{rmk}{
We emphasize the fact that the criticality condition for $\Gamma$
and the nondegeneracy property are tested only within the semi-space
$\bs{y}\geq 1$, which involve only the part~\eqref{ExampleA2} of the
admissible left-extension, since $\hat{a}(\bs{x},\bs{y})=a(\bs{y})$
in this region and the curve complies $|f(\bs{x})|\geq 1$. }
\end{rmk}

    \begin{minipage}[b]{\linewidth}
      \centering
      \includegraphics[width=0.6\linewidth]{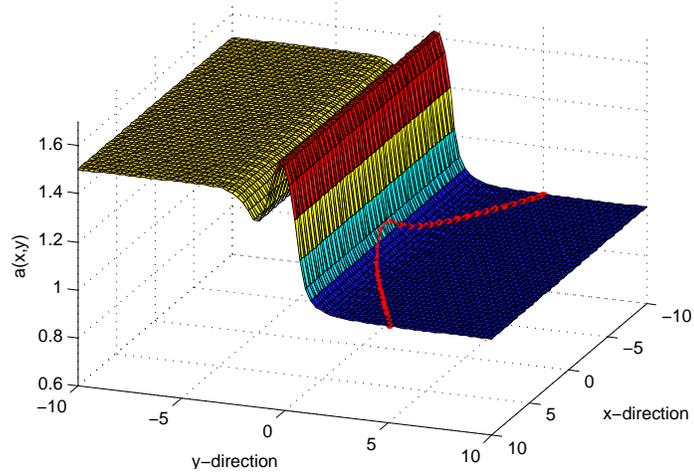}\\[-0.25cm]
      \captionof{figure}[Potential $\hat{a}(x,y)$ with asymptotic straight line as non-degenerate geodesic]
      {Potential $\hat{a}(\bs{x},\bs{y})$ \eqref{ExampleA2} with $\Gamma_{\omega}$ as nondegenerate geodesic, for $\omega=1/2$.\label{fig:PotentialExample2}}
    \end{minipage}


\section{Approximation of the solution and preliminary
discussion }\label{section5}
Here and after we assume
$$
-F'(s):=s(1-s^2), \quad s\in \R
$$
but we remark that the developments we make follows,  for a general
double well potential $F$ satisfying
\eqref{IntrodEq2}-\eqref{IntrodEq3}-\eqref{IntrodEq3.2}, with no
significant changes.

\medskip

\subsection{The approximation local Approximation.} To begin with,
let us consider the parameter function $h\in C^2(\R)$, for which we
assume further $h=h(\bs{s})$ satisfies the apriori estimate
\begin{multline}
\|h\|_{C^{2,\lambda}_{2+\alpha,*}(\R)}:= \|h\|_{L^{\infty}(\R)}+
\|(1+|\,\bs{s}\,|)^{1+\alpha}h'\|_{L^{\infty}(\R)}+\\
\sup_{\bs{s}\in\R}(1+|\,\bs{s}\,|)^{2+\alpha}
\|h''\|_{C^{0,\lambda}(\bs{s}-1,\bs{s}+1)}\leq \K \ep
\label{BallConditionH1}
\end{multline}
for a certain constant $\K>0$ that will be chosen later, but
independent of $\ep>0$.

Let us consider $w(t)$, the solution to the ODE
$$
w''(t) -F'(w(t))=0, \quad w'(t)>0, \quad w(\pm\infty)=\pm1.
$$

As mentioned in the previous section, by an obvious rescaling,
equation \eqref{IntrodEq12} becomes
\begin{equation*}
S(v):= \Delta_{x} v+\ep\dfrac{\nabla_{\bar{x}} a}{a}\nabla_x v
+v(1-v^2)=0, \quad \hbox{in }\R^2
\end{equation*}

Let us choose in the region $\Nn_{\ep,h}$, as first approximate for
a solution to \eqref{IntrodEq12}, the function
\begin{equation*}
v_0(x):=w(z-h(\ep s))=w(t), \quad x=X_{\ep,h}(s,t)\in \Nn_{\ep,h}
\end{equation*}
where $z=t+h(\ep s)$ designates the normal coordinate to
$\Gamma_{\ep}=\ep^{-1}\Gamma$.

\medskip
Using expression \eqref{EcuacionAllen-CahnEnVecindadDeFermiEq4} we
compute the error $S(v_0)$ in the region $\Nn_{\ep,h}$  to find that

$$
S(v_0)\,=\,-\ep^2\J_{a}[h](\ep s)w'(t)-\ep^2\left[2k^2(\ep
s)\,-\,\dfrac{\partial_{\bs{t}\bs{t}}a}{a}(\ep s,0)\right]tw'(t)
\,+\,\ep^2|h'(\ep s)|^2w''(t)
$$
\begin{align}
&+\ep(t+h(\ep s))A_0(\ep s,\ep(t+h))[-\ep^2h''(\ep s)w'(t)+\ep^2|h'(\ep s)|^2w''(t)]\notag\displaybreak[3]\\[0.2cm]
&+\ep^2(t+h(\ep s))\tilde{B}_0(\ep s,\ep(t+h))\left(-\ep h'(\ep s)w'(t)\right)\notag\displaybreak[3]\\[0.2cm]
&+\ep^3(t+h(\ep s))^2\tilde{C}_0(\ep
s,\ep(t+h))w'(t)\label{ApproximationOfASolutionEq1}
\end{align}
with $A_0,\tilde{B}_0,\tilde{C}_0$ are given in
\eqref{ErrorA0LaplacianXeph}-\eqref{EcuacionAllen-CahnEnVecindadDeFermiEq5}
-\eqref{EcuacionAllen-CahnEnVecindadDeFermiEq6}, respectively. We
emphasize that we have broken formula
\eqref{ApproximationOfASolutionEq1} into powers of $\ep$, keeping in
mind that $h=\mathit{O}(\ep)$.

\medskip
For every fixed $s\in\R$, let us consider the $L^2$-projection,
given by
$$ \Pi(s):=\int^{+\infty}_{-\infty}S(v_0)(s,t)w'(t)dt$$
where for simplicity we are assuming that coordinates are defined
for all $t$, since the difference with the integration taken in all
the actual domain for $t$, produces only exponentially small terms.

From \eqref{ApproximationOfASolutionEq1}, we observe that
$$
\Pi(\ep s)=-\ep^2\J_{a}[h](\ep s)\int_{\R}w'(t)^2dt
$$
$$
\,-\,\ep^3\int_{\R}(t+h)A_0(\ep s,\ep(t+h))\left[h''(\ep
s)\,|w'(t)|^2 - |h'(\ep s)|^2\,w''(t) w'(t)\right]dt
$$
\begin{equation}\label{ApproximationOfASolutionEq3}
-\ep^3\int_{\R}\left[(t+h)\tilde{B}_0(\ep s,\ep(t+h))
 h'(\ep s)\,-\,(t+h)^2\tilde{C}_0(\ep
s,\ep(t+h))\right]\,|w'(t)|^2dt
\end{equation}
where we used that $\int^{+\infty}_{-\infty}tw'(t)^2dt=0,\int^{+\infty}_{-\infty}w''(t)w'(t)dt=0$, to get rid of the terms of
order $\ep^2$.

\medskip
Making this projection equal to zero is equivalent to the nonlinear
differential equation for $h$
\begin{align}
\J_{a,\Gamma}[h]= h''(\bs{s})+\dfrac{\partial_s
a(\bs{s},0)}{a(\bs{s},0)}h'(\bs{s})-Q(\bs{s})h(\bs{s})=G_0[h]\ ,
\;\forall \bs{s}\in\R\label{ApproximationOfASolutionEq4}
\end{align}
where we have set
$$
Q(\bs{s})=\left[\frac{\partial_{tt}a(\bs{s},0)}{a(\bs{s},0)}-2k^2(\bs{s})\right]
$$
and $G_0$ consists in the remaining terms of
\eqref{ApproximationOfASolutionEq3}.

\medskip
$G_0$ is easily checked to be a Lipschitz in $h$, with small
Lipschitz constant. Here is where the \emph{nondegeneracy condition}
on the curve $\Gamma$ makes its appearance, as we need to invert the
operator $\J_{a,\Gamma}$, in such way that
equation~\eqref{ApproximationOfASolutionEq4} can be set as a fixed
problem for a contraction mapping of a ball of the form
\eqref{BallConditionH1}.

\medskip
It will be necessary to pay attention to the size of $S(v_0)$ up to
$\mathit{O}(\ep^2)$, because the solvability of the nonlinear Jacobi
equation~\eqref{ApproximationOfASolutionEq4} depends strongly on the
fact that the error created by our choice of the approximation, is
sufficiently small in  $\ep>0$.

We improve our choice of the approximation throughout the following
argument. Let us consider the ODE
$$ \psi''_0(t)-F''(w(t))\psi_0(t)=tw'(t)$$
which has a unique bounded solution with $\psi_0(0)=0$, given
explicitly by the variation of parameters formula
$$
\psi_0(t)=w'(t)\int^t_{0}w'(\tau)^{-2}\int^{\tau}_{-\infty}sw'(s)^2dsd\tau.
$$

Since $\int_{\R}sw'(s)^2ds=0$, the function $\psi_0(s)$ satisfies
that
$$
\|e^{\sigma|t|}\partial^j\psi_0\|_{L^{\infty}(\R)}\leq C_{j,
\sigma}, \quad j=1,2,\ldots, \quad 0\leq\sigma<\sqrt{2}.
$$

Due to the finer topology we are considering for $h$, we must also
improve the term
$$
-\ep^3\left[k^3(\ep
s)+\frac{1}{2}\partial_{tt}\left(\frac{\partial_{t}a}{a}\right) (\ep
s,0)\right]t^2w'(t)
$$
present in $\ep^3(t+h(\ep s))^2\tilde{C}_0(\ep s,\ep(t+h))w'(t)$ in
expression \eqref{ApproximationOfASolutionEq1}. Hence, let us
consider $g(t):=t^2w'(t)$ and note that we can write
$$
g=C_{g}w'+g_{\perp}
$$  where $g_{\perp}$ denotes the orthogonal
projection of $g$ onto $w'$ in $L^2(\R)$, given by
$$
g_{\perp}(t):=t^2w'(t)-\frac{\int_{\R}\tau^2
w'(\tau)^2d\tau}{\int_{\R}w'(\tau)^2d\tau}w'(t).
$$
Thus by setting
$$
\psi_1(t)=w'(t)\int^t_{0}
w'(\tau)^{-2}\int^\tau_{-\infty}g_{\perp}(s)w'(s)dsd\tau
$$
this formula provides a bounded smooth solution to
$\psi''_1(t)F''(w(t))\psi_1(t)=g_{\perp}(t)$ with
$$
\|e^{\sigma|t|}\partial^j\psi_1\|_{L^{\infty}(\R)}\leq C_{j,
\sigma}, \quad j=1,2,\ldots, \quad 0\leq\sigma<\sqrt{2}.
$$

Hence, we choose as new approximation, the function
\begin{equation}
v_1(s,t):=v_0(s,t)+\varphi_1(s,t)=w(t)
+\varphi_1(s,t)\label{ImproveAproximation}
\end{equation}
where
\begin{eqnarray*}
\varphi_1(s,t)&:=&\ep^2\left[2k^2(\ep
s)-\dfrac{\partial_{\bs{t}\bs{t}} a(\ep s,0)}{a(\ep s,0)}\right]
\psi_0(t) \nonumber\\ \nonumber\\
&&-\ep^3\left[k^3(\ep
s)+\frac{1}{2}\partial_{tt}\left(\frac{\partial_{t}a}{a}\right) (\ep
s,0)\right]\psi_1(t)
\end{eqnarray*}
and which can be easily seen to behave like
$\varphi_1(s,t)=\mathit{O}(\ep^2(1+|\ep s|)^{-2-\alpha}e^{-\sigma|t|})$, for
sigma $0<\sigma<\sqrt{2}$. This is due to the assumptions
\eqref{IntrodEq15}-\eqref{IntrodEq14} we have made on the curve
$\Gamma$ and the potential $a$, and to the previous observation on
$\psi_0(t),\ \psi_1(t)$.

\medskip
Now, to analyze the error term $S(v_1)$, notice that
\begin{equation*}
S(v_0+\varphi_1)=S(v_0)\,+\,\Delta_{x}\varphi_1+\ep\dfrac{\nabla_{\bar{x}}
a}{a}\nabla_x \varphi_1-F''(v_0)\varphi_1+N_0(\varphi_1)
\end{equation*}
where
\begin{align}
N_0(\varphi_1)=-F'(v_0+\varphi_1)+F'(v_0)+F''(v_0)\varphi_1\label{QuadraticN0}
\end{align}

From the definition of $\varphi_1$, we find that
\begin{eqnarray}
S(v_1)&=&S(v_0)\,+\,\ep^2\,\left[2k^2(\ep s)-\,\dfrac{\partial_{tt}
a(\ep s,0)}{a(\ep s,0)}\right]tw'(t)
\nonumber\\
&&-\,\ep^3\left[k^3(\ep s)+\frac{1}{2}\partial_{tt}\left(
\frac{\partial_{t}a}{a}\right)(\ep s,0)\right]g_{\perp}(t)\nonumber\\
&&+\,\left[\Delta_{x} + \ep \dfrac{\nabla_{\bar{x}} a}{a}\nabla_x
-\partial_{tt}\right]\varphi_1\,+\,N_0(\varphi_1).\label{ApproximationOfASolutionEq6}
\end{eqnarray}

Analyzing the new error created by $\varphi_1$, we readily check
using the expansions for the differential operators
\eqref{LaplacianXeph}-\eqref{GradientsXeph} and the definition
\eqref{QuadraticN0}, that

$$
\left[\Delta_{x}+\ep\dfrac{\nabla_{\bar{x}}
a}{a}\nabla_x-\partial_{tt}\right] \varphi_1
\,+\,N_0(\varphi_1)\,=\,
$$

$$
-\ep^4\,Q''(\ep s)\psi_0+\ep^4\bigl[\J_{a}[h](\ep s)-tQ(\ep s)
\bigr]Q(\ep s)\psi'_0
$$

$$
-\ep^4\left(\dfrac{\partial_{\bs{s}}a(\ep s,0)}{a(\ep s,0)}Q'(\ep
s)\psi_0+2h'_1(-Q'(\ep s)\psi'_0)+\biggl.|h'_1|^2Q(\ep s)\psi''_0
\right)
$$

\begin{align}
+\mathit{O}(\ep^4(1+|\ep
s|)^{-4-\alpha}e^{-\sigma|t|})\label{ApproximationOfASolutionEq8}
\end{align}
where we recall the convention
\begin{equation*}
Q(\bs{s})=\frac{\partial_{\bs{t}\bs{t}}a(\bs{s},0)}{a(\bs{s},0)}
-2k^2(\bs{s}), \quad \bs{s}\in \R
\end{equation*}
and have used that the error terms in the differential operator
evaluated in $\varphi_1$, associated to $A_0(\ep s,\ep(t+h)),
\tilde{B}_0(\ep s,\ep(t+h))$, $\tilde{C}_0(\ep s,\ep(t+h))$ behave
like $\mathit{O}(\ep^5(1+|\ep s|)^{-4-2\alpha}e^{-\sqrt{2}|t|})$, given that
$h$ has a bounded size is $\ep s$ by \eqref{BallConditionH1}, and
since $\varphi_1(s,t)$ has smooth dependence in $\ep s$ with size
$\mathit{O}(\ep^2(1+|\ep s|)^{-2-\alpha}e^{-\sigma|t|})$.

\medskip
Therefore, the error \eqref{ApproximationOfASolutionEq8} is can be
written as
\begin{equation*}
\left[\Delta_{x}+\ep\dfrac{\nabla_{\bar{x}}
a}{a}\nabla_x-\partial_{tt}\right]\varphi_1+N_0(\varphi_1)= \ep^4
Q(\ep s)\psi'_0(t)h''(\ep s)+R_0(\ep s,t,h)
\end{equation*}

where the function $R_0=R_0(\ep s,t,h(\ep s),h'(\ep s))$ has
Lipschitz dependence in variables $h,h'$ on the ball
$$
\|h\|_{L^{\infty}(\R)}+\|h'\|_{L^{\infty}(\R)}\leq \K \ep.
$$

Moreover, under our set of assumptions and the observation made on
$\psi_0$, it turns out that for any $\lambda\in(0,1)$:
$$
\|R_0(\ep s,t,h)\|_{C^{0,\lambda}(B_1(s,t))} \leq C\ep^4(1+|\ep
s|)^{-4-\alpha}e^{-\sigma|t|}.
$$

\medskip
With this remarks, we can write the error created by $v_1$ in
\eqref{ApproximationOfASolutionEq6}, as
$$
S(v_1)=-\ep^2\J_{a}[h](\ep s)w'(t)+\ep^3\left[k^3(\ep
s)+\frac{1}{2}\partial_{tt}\left(\frac{\partial_{t}a}{a}\right) (\ep
s,0)\right]\frac{\int_{\R}\tau^2
w'(\tau)^2d\tau}{\int_{\R}w'(\tau)^2d\tau}w'(t)
$$
\begin{equation}
+\ep^4 Q(\ep s)\psi'_0(t) h''(\ep s)-\ep^3(t+h)A_0(\ep
s,\ep(t+h))h''(\ep s)w'(t) \,+\,R_1(\ep s,t,h(\ep s),h'(\ep s))
\label{ApproximationOfASolutionEq11}
\end{equation}

where
\begin{align}
R_1&=\ep^2|h'|^2w''(t)+R_0(\ep s,t)+\ep^3(t+h)A_0(\ep s,\ep(t+h))|h'|^2w''(t)\notag\\[0.2cm]
&-\ep^3(t+h)\tilde{B}_0(\ep s,\ep(t+h))h'w'(t)+
\ep^4(t+h)\mathit{O}\left(\partial_{\bs{t}\bs{t}\bs{t}}\left(
\frac{\partial_{\bs{t}}a}{a}\right)+k^4\right)t^2w'(t).
\label{DefinitionR1}
\end{align}
Furthermore, $R_1=R_1(\ep s,t,h(\ep s),h'(\ep s))$ satisfies that
$$
|\partial_{\imath} R_1(\ep s,t,\imath,\jmath)|+
|\partial_{\jmath} R_1(\ep s,t,\imath,\jmath)|+
|R_1(\ep s,t,\imath,\jmath)|\leq
 C\ep^4(1+|\ep s|)^{-2-2\alpha}e^{-\sqrt{2}|t|}
 $$

with the constant $C$ above depending on the number $\K$ of
condition \eqref{BallConditionH1}, but independent of $\ep>0$.

We can summarize this discussion by saying that
\begin{multline*}
S(v_1)\,+\,\ep^2\J_{a}[h](\ep s)w'(t)\,\\
-\ep^3\left[k^3(\ep
s)+\frac{1}{2}\partial_{tt}\left(\frac{\partial_{t}a}{a}\right) (\ep
s,0)\right]\frac{\int_{\R}\tau^2
w'(\tau)^2d\tau}{\int_{\R}w'(\tau)^2d\tau}w'(t)\, \\
=\,\mathit{O}(\ep^4(1+|\ep s|)^{-2-\alpha}e^{-\sigma|t|})
\end{multline*}
for $x=X_{\ep,h}(s,t)\in \Nn_{\ep,h}$.

\medskip
\subsection{The global approximation.} The approximation
$v_1(x)$ in \eqref{ImproveAproximation} will be sufficient for our
purposes. However, it is defined only in the region
\begin{align}
\Nn_{\ep,h}=\left\{x=X_{\ep,h}(s,t)\in\R^2/\; |t+h(\ep
s)|<\dfrac{\delta}{\ep}+c_0|s|=:\rho_{\ep}(s)\right\}\label{ApproximationOfASolutionEq12}
\end{align}

Since we are assuming that $\Gamma$ is a connected and simple and
that it also possesses two ends departing from each other, it
follows that $\R^2\setminus\Gamma_{\ep}$ consists of precisely two
components $S_{+}$ and $S_{-}$. Let us use the convention that
$\nu_{\ep}$ points towards $S_{+}$. The previous comments allow us
to define in $\R^2\setminus\Gamma_{\ep}$ the function
\begin{equation*}
\H(x):=\left\{\begin{array}{lr}+1 & \text{ if }x\in S_{+}\\-1 &
\text{ if }x\in S_{-}
\end{array}\right.
\end{equation*}

Let us consider $\eta(s)$ a smooth cut-off function with $\eta(s)=1$
for $s<1$ and $=0$ for $s>2$, and define
\begin{equation*}
\zeta_{3}(x):=\left\{\begin{array}{cr} \eta(|t+h(\ep
s)|-\rho_{\ep}(s)+3) & \text{ if }x\in\Nn_{\ep,h}\\ 0 &\text{ if
}x\notin\Nn_{\ep,h} \end{array}
\right.
\end{equation*}
where $\rho_{\ep}$ is defined in
\eqref{ApproximationOfASolutionEq12}.

Next, we consider as global approximation the function $\w(x)$
defined as
\begin{align}
\w:=\zeta_{3}\cdot v_1+(1-\zeta_{3})\cdot
\H\label{GlobalApproximation}
\end{align}
where $u_1(x)$ is given by \eqref{ImproveAproximation}.

\medskip
Using that $\H(\ep^{-1}\, \bar{x})$ is an exact solution to
\eqref{IntrodEq12} in $\R^2\setminus\Gamma$, the error of global
approximation can be computed as
\begin{equation}
S(\w)\,=\,\Delta_{x}\w+\ep\dfrac{\nabla_{\bar{x}} a}{a}\nabla_x
\w-F'(\w) \,=\,\zeta_{3}S(v_1)+E \label{ErrorGlobalApprox}
\end{equation}
where $S(v_1)$ is computed in \eqref{ApproximationOfASolutionEq11}
and the term $E$ is given by
\begin{multline}
E=\Delta_{x}\zeta_{3}(v_1-\H)+2\nabla_x\zeta_{3}\nabla_x(v_1-\H)
+(v_1-\H)\dfrac{\nabla_{\bar{x}} a}{a}\nabla_x
\zeta_{3}\\
-F'\bigl(\zeta_{3}v_1+(1-\zeta_{3})\H\bigr)+\zeta_{3}F'(v_1)
\label{ErrorGlobalApprox2}
\end{multline}

It is worth to mention that the from the form of the neighborhood
$\Nn_{\ep,h}$ in \eqref{ApproximationOfASolutionEq12}, and from the
choice of $v_1$, one can readily check that for every $x=X_{\ep,h}
\in \Nn_{\ep,h}$
$$
|v_1(x)-\H(x)|\leq e^{-\sqrt{2}|t+h(\ep s)|}, \quad
\rho_{\ep}-2<|t+h(\ep s)|<\rho_{\ep}-1
$$

and therefore
$$
|E|\leq C\ e^{-\sqrt{2}|t+h(\ep s)|}\leq Ce^{-\sqrt{2}\delta/\ep}
\cdot e^{-c|s|}e^{-\sigma|t|}
$$

for  some $0<\sigma<\sqrt{2}$ and $c>0$ small.


\section{The proof of
Theorem \ref{TheoremMainResult}}

In this section we sketch the proof of Theorem
\ref{TheoremMainResult} leaving the detailed proofs of every
proposition mentioned here for subsequent sections.

We look for a solution $u$ of the inhomogeneous Allen-Cahn
equation~\eqref{EqAllenCahn} in the form
\begin{equation*}
u=\w+\varphi 
\end{equation*}

where $\w$ is the global approximation defined in
\eqref{GlobalApproximation} and $\varphi$ is small in some suitable
sense. We find that $\varphi$ must solve the following nonlinear
equation
\begin{align}
\Delta_{x}\varphi+\ep\dfrac{\nabla_{\bar{x}}
a}{a}\nabla_{x}\varphi-F''(\w)\varphi\,=\,-S(\w)\,-\,N_1(\varphi)\label{ProofThmEq2}
\end{align}
where
\begin{align}
S(\w)&:=\Delta_{x}\w+\ep\dfrac{\nabla_{\bar{x}} a}{a}\nabla_{x}
\w -F'(\w) \label{DefSw}\\[0.3cm]
N_1(\varphi)&:=-F'(\w+\varphi)+F'(\w)+F''(\w)\varphi\label{DefN1}
\end{align}

\medskip
We introduce several norms that will allow us to set up an
appropriate functional scheme to solve \eqref{ProofThmEq2}. Let us
consider $\eta(s)$, a cut-off function with $\eta(s)=1$ for $s<1$
and $\eta=0$ for $s>2$, we define
\begin{align}
\zeta_n(x):=\left\{\begin{array}{cr}\eta
\left(|t+h(\ep s)|-\rho_{\ep}(s)+n\right) & \text{ if }
x\in \Nn_{\ep,h}\\
 0 & \text{ if } x\notin \Nn_{\ep,h}
 \end{array}\right.\label{ProofThmEq3}
\end{align}
where $\rho_{\ep}$ and $\Nn_{\ep,h}$ are set in
\eqref{ApproximationOfASolutionEq12}.

\medskip
Let us consider $\lambda\in(0,1)$, $b_1,b_2>0$ fixed and satisfying
that $b_1^2+b_2^2<(\sqrt{2}-\tau)/2$ for $\tau>0$. Define the weight
function $K(x)$, for $x=(x_1,x_2)\in \R^2$, as follows
\begin{align}
K(x):=\zeta_2(x)\left[e^{\sigma|t|/2}(1+|\ep
s|)^{\mu}\right]+(1-\zeta_2(x))e^{b_1|x_1|+b_2|x_2|}.
\label{WeightFunction}
\end{align}

For a function $g(x)$ defined in $\R^2$, we set the norms
\begin{align}
&\|g\|_{L^{\infty}_{K}(\R^2)}:=\sup_{x\in\R^2}K(x)\|g\|_{L^{\infty}(B_1(x))}\label{DefNorm01}\\[0.2cm]
&\|g\|_{C^{0,\lambda}_{K}(\R^2)}:=\sup_{x\in\R^2}K(x)\|g\|_{C^{0,\lambda}(B_1(x))}\label{DefNorm02}
\end{align}

On the other hand, consider $\ep>0$, $\mu\geq 0$ and
$0<\sigma<\sqrt{2}$. For functions $g(s,t)$ and $\phi(s,t)$ defined
in whole $\R\times\R$, we set
\begin{align}
&\|g\|_{C^{0,\lambda}_{\mu,\sigma}(\R^2)}:=\sup_{(s,t)\in\R\times\R}(1+|\ep
s|)^{\mu}e^{\sigma|t|}
\|g\|_{C^{0,\lambda}(B_1(s,t))}\label{DefNorm10}\\
&\|\phi\|_{C^{2,\lambda}_{\mu,\sigma}(\R^2)}:=\|D^2\phi\|_{C^{0,\lambda}_{\mu,\sigma}(\R^2)}+\|D\phi\|_{L^{\infty}_{\mu,\sigma}(\R^2)}
+\|\phi\|_{L^{\infty}_{\mu,\sigma}(\R^2)}\label{DefNorm13}
\end{align}

Finally, given $\alpha>0$ and $\lambda\in(0,1)$, consider,for a
function $f$ defined in $\R$, the norm
\begin{align}
\|f\|_{C^{0,\lambda}_{2+\alpha,*}(\R)}:=\sup_{\bs{s}\in\R}(1+|\bs{s}|
)^{2+\alpha}\|f\|_{C^{0,\lambda}(\bs{s}-1,\bs{s}+1)}.\label{DefNorm3}
\end{align}

Recall also that the parameter function $h(\bar{s})$ satisfies for
some $\lambda\in(0,1)$
$$
\|h\|_{C^{2,\lambda}_{2+\alpha,*}(\R)}\leq \K\ep
$$
where
\begin{align}
\|h\|_{C^{2,\lambda}_{2+\alpha,*}(\R)}:=
\|h\|_{L^{\infty}(\R)}+\|(1+|\bs{s}|)^{1+\alpha}h'\|_{L^{\infty}(\R)}+
\|h''\|_{ C^{0,\lambda}_{2+\alpha,*}(\R)}.\label{DefNorm4}
\end{align}

\medskip
{\bf 4.1 The gluing procedure. } In order to solve
\eqref{ProofThmEq2}, let us look for a solution $\varphi$ of
problem, having the form
\begin{equation*}
\varphi(x)=\zeta_3(x)\phi(s,t)+\psi(x), \quad \hbox{for } x \in\R^2
\end{equation*}

where $\phi$ is defined in $\Gamma_{\ep}\times\R$ and $\psi$ is
defined in entire $\R^2$. Using that $\zeta_3\cdot\zeta_4=\zeta_4$,
we get that \eqref{ProofThmEq2} reads as
$$
S(\w+\varphi)=\zeta_3\left[\Delta_{x}\phi+\ep
\dfrac{\nabla_{\bar{x}} a}{a}\nabla_{x}\phi+f'(u_1)\phi\right]
$$
$$
\,+\,\zeta_4\left[\,\,[f'(u_1)-f'(H(t))]\psi+N_1(\psi+\phi)+S(u_1)\right]
$$
$$
+\Delta_{x}\psi+\ep \dfrac{\nabla_{\bar{x}}a}{a}\nabla_{x}\psi+
[(1-\zeta_4)f'(u_1)+\zeta_4f'(H(t))]\psi+(1-\zeta_3)S(\w)
$$
$$
+(1-\zeta_4)N_1(\psi+\zeta_3\phi)+2\nabla_{x}\zeta_3\nabla_{x}\phi+\phi\Delta_{x}\zeta_3+\ep\phi\dfrac{\nabla_{\bar{x}}a}{a}\nabla_{x}\zeta_3\label{ProofThmEq5}
$$
where $H(t)$ is some increasing smooth function satisfying
\begin{align}
H(t)=\left\{\begin{array}{ll} +1 & \text{ if } t>1\\ -1 & \text{ if
}t<-1. \end{array} \right.\label{ProofThmEq6}
\end{align}

In this way, we will have constructed a solution
$\varphi=\zeta_3\phi+\psi$ to problem \eqref{ProofThmEq2} if we
require that the pair $(\phi,\psi)$ satisfies the coupled system
below
\begin{multline}
\Delta_{x}\phi+\ep\dfrac{\nabla_{\bar{x}}
a}{a}\nabla_{x}\phi+f'(u_1)\phi\\
+\zeta_4[f'(u_1)-f'(H(t))]\psi+\zeta_4N_1(\psi+\phi)+S(v_1)=0
\;\text{ for } |t|<\dfrac{\delta}{\ep}\label{ProofThmEq7}
\end{multline}

\begin{multline}
\Delta_{x}\psi+\ep\dfrac{\nabla_{\bar{x}} a}{a}\nabla_{x}
\psi+[(1-\zeta_4)f'(u_1)+\zeta_4f'(H(t))]\psi+(1-\zeta_3)S(\w)\\
+(1-\zeta_4)N_1(\psi+\zeta_3\phi)\,+\,2\nabla_{x}\zeta_3\nabla_{x}\phi
+\phi\Delta_{x}\zeta_3+\ep\phi\dfrac{\nabla_{\bar{x}}
a}{a}\nabla_{x}\zeta_3=0,\quad\text{ in }\quad
\R^2\label{ProofThmEq8}
\end{multline}

Next, we will extend equation \eqref{ProofThmEq7} to entire
$\R\times\R$. To do so, let us set
\begin{align}
\B(\phi)=\zeta_0
\tilde{\B}_0(\phi):=\zeta_0[\Delta_{x}-\partial_{tt}-\partial_{ss}]\phi
\label{ProofThmEq9}
\end{align}

where $\Delta_{x}$ is expressed in local coordinates, using formula
\eqref{LaplacianXeph}, and $\B(\phi)$ is understood to be zero for
$|t+h(\ep s)|>\rho_{\ep}(s,t) - 2$. Thus
equation~\eqref{ProofThmEq7} is  extended as
\begin{align}
\partial_{tt}\phi+&\partial_{ss}\phi+\ep\dfrac{\nabla_{\bar{x}}
a}{a}\nabla_{x}\phi+\B(\phi)+f'(w(t))\phi=-\tilde{S}(u_1)\notag\\[0.3cm]
&-\left\{[f'(u_1)-f'(w)]\phi+\zeta_4[f'(u_1)-f'(H(t))]\psi+
\zeta_4N_1(\psi+\phi)\right\},\;\text{ in }\;
\R\times\R\label{ProofThmEq10}
\end{align}

where we have denoted
\begin{align}
\tilde{S}(u_1)(s,t)&= -\ep^2\J_{a}[h](\ep s)w'(t)-\ep^3\left[k^3(\ep
s)+\frac{1}{2}\partial_{tt}\left(\frac{\partial_{t}a}{a}\right) (\ep
s,0)\right]\frac{\int_{\R}\tau^2
w'(\tau)^2d\tau}{\int_{\R}w'(\tau)^2d\tau}w'(t)\,
\notag\\[0.2cm]
&+\ep^4 Q(\ep s)\psi'_0(t)\cdot h''(\ep
s)+\zeta_{0}\biggl\{\ep^3(t+h)A_0(\ep s,\ep(t+h))h''(\ep s)\cdot
w''(t)+R_1\biggr\}.\label{Su1tilde}
\end{align}

Recall from \eqref{DefinitionR1} that
$$ R_1=R_1(\ep s,t,h(\ep s),h'(\ep s))$$
satisfies
\begin{align}
|\partial_{\imath}R_1(\ep s,t,\imath,\jmath)|+
|\partial_{\jmath}R_1(\ep s,t,\imath,\jmath)|+ |R_1(\ep
s,t,\imath,\jmath)| \leq C\ep^4(1+|\ep
s|)^{-2-2\alpha}e^{-\sqrt{2}|t|}\label{R1Size}
\end{align}

In order to solve the resulting system
\eqref{ProofThmEq8}-\eqref{ProofThmEq10}, we focus first on solving
equation~\eqref{ProofThmEq8} in $\psi$ for a fixed and small $\phi$.
We make use of the important observation that the term
$[(1-\zeta_4)f'(u_1)+\zeta_4f'(H)]$, is uniformly negative and so
the operator in \eqref{ProofThmEq8} is qualitatively similar to
$\Delta_{x}+\ep\nabla_{\bar{x}}a/a\cdot\nabla_{x}-2$. A direct
application of the contraction mapping principle lead us to the
existence of a solution $\psi=\Psi(\phi)$, according to the next
proposition whose detailed proof is carried out in Section 5.

\begin{prop}{\label{LemmaNonlinearPsiEq}
Let $\lambda\in(0,1)$, $\sigma\in(0,\sqrt{2})$,
$\mu\in(0,2+\alpha)$. There is $\ep_0>0$, such that for any small
$\ep\in(0,\ep_0)$ the following holds. Given $\phi$ with
$\|\phi\|_{C^{2,\lambda}_{\mu,\sigma}(\R^2)}\leq 1$, there is a
unique solution $\psi=\Psi(\phi)$ to equation \eqref{ProofThmEq8}
with
\begin{equation*}
\|\psi\|_{X}:=\|D^2\psi\|_{C^{0,\lambda}_{K}(\R^2)}+\|D\psi\|_{L^{\infty}_{K}(\R^2)}+\|\psi\|_{L^{\infty}_{K}(\R^2)}\leq
Ce^{-\sigma\delta/2\ep}
\end{equation*}
Besides, $\Psi$ satisfies the Lipschitz condition
\begin{align}
\|\Psi(\phi_1)-\Psi(\phi_2)\|_{X}\leq
Ce^{-\sigma\delta/2\ep}\|\phi_1-\phi_2\|_{C^{2,\lambda}_{
\mu,\sigma}(\R^2)}\label{LipschitzPsi}
\end{align}
where the norms $L^{\infty}_{K}, C^{0,\lambda}_{K},
C^{2,\lambda}_{\mu,\sigma}$ are defined in
\eqref{DefNorm01}-\eqref{DefNorm02}-\eqref{DefNorm13}. }
\end{prop}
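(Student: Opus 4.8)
The plan is to solve \eqref{ProofThmEq8} for $\psi$ by the contraction mapping principle, treating $\phi$ (with $\|\phi\|_{C^{2,\lambda}_{\mu,\sigma}(\R^2)}\le 1$) as a fixed datum. Write the linear operator of \eqref{ProofThmEq8} as $\mathcal{L}_\ep\psi:=\Delta_x\psi+\ep\frac{\nabla_{\bar{x}}a}{a}\cdot\nabla_x\psi+V\psi$, where $V:=(1-\zeta_4)f'(u_1)+\zeta_4 f'(H)$. As noted after \eqref{Su1tilde}, for a suitable choice of the fixed profile $H$ in \eqref{ProofThmEq6}, $\mathcal{L}_\ep$ is qualitatively $\Delta_x+\ep\frac{\nabla_{\bar{x}}a}{a}\cdot\nabla_x-2$: the zero-order coefficient is bounded above by $-c_0<0$ with $c_0$ as close to $2$ as desired, since $f'(\pm1)=-2$ and $u_1,H$ are uniformly close to $\pm1$ away from a thin strip (whose width is chosen small enough that coercivity persists). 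The first task is to show that $\mathcal{L}_\ep$ is an isomorphism from the space with norm $\|\cdot\|_X$ onto $C^{0,\lambda}_K(\R^2)$, with inverse bounded uniformly in $\ep\in(0,\ep_0)$.

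For the a priori estimate I would use the weight $K$ of \eqref{WeightFunction} itself as a barrier. Near $\Gamma_\ep$, where $K^{-1}\sim e^{-\sigma|t|/2}(1+|\ep s|)^{-\mu}$, the $\partial_{tt}$-derivative contributes $\sigma^2/4<2$, while the $\partial_{ss}$-, cross $\partial_{st}$- and drift terms coming from \eqref{LaplacianXeph}-\eqref{GradientsXeph} are $\mathit{O}(\ep)$; away from $\Gamma_\ep$, where $K^{-1}\sim e^{-b_1|x_1|-b_2|x_2|}$, the Laplacian contributes $b_1^2+b_2^2<(\sqrt2-\tau)/2<2$ plus nonpositive distributional corrections, and the drift is again $\mathit{O}(\ep)$; in both regimes, and on the intermediate region where $\zeta_2$ varies, one obtains $\mathcal{L}_\ep(K^{-1})\le-\theta K^{-1}$ for some $\theta>0$ and all small $\ep$. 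The maximum principle then gives $\|\psi\|_{L^\infty_K(\R^2)}\le C\|\mathcal{L}_\ep\psi\|_{L^\infty_K(\R^2)}$, and interior Schauder estimates on unit balls, on which $K$ varies by a bounded factor, upgrade this to $\|\psi\|_X\le C\|\mathcal{L}_\ep\psi\|_{C^{0,\lambda}_K(\R^2)}$. Solvability follows by solving the Dirichlet problem for $\mathcal{L}_\ep$ on an exhausting sequence of balls (each uniquely solvable, the potential being negative) and passing to the limit with the uniform bound.

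Next I would estimate in $C^{0,\lambda}_K(\R^2)$ the terms of \eqref{ProofThmEq8} not involving $\psi$. The term $(1-\zeta_3)S(\w)$ is supported where $|t+h(\ep s)|>\rho_\ep(s)-3$; there, by \eqref{ErrorGlobalApprox}-\eqref{ErrorGlobalApprox2}, the bound $|E|\le Ce^{-\sqrt2\delta/\ep}e^{-c|s|}e^{-\sigma|t|}$, and $|\zeta_3 S(v_1)|\lesssim e^{-\sqrt2\rho_\ep}$ on that set, together with the fact that $b_1^2+b_2^2$ is small enough that $b_1|x_1|+b_2|x_2|-\sqrt2|t+h(\ep s)|\le-\sqrt2\delta/2\ep+\mathit{O}(1)$ on the support of $1-\zeta_3$, this term is $\le Ce^{-\sigma\delta/2\ep}$. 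The transition terms $2\nabla_x\zeta_3\cdot\nabla_x\phi+\phi\Delta_x\zeta_3+\ep\phi\frac{\nabla_{\bar{x}}a}{a}\cdot\nabla_x\zeta_3$ are supported where $\rho_\ep-3<|t+h(\ep s)|<\rho_\ep-2$, on which $\zeta_2=1$ so $K=e^{\sigma|t|/2}(1+|\ep s|)^{\mu}$; since $\|\phi\|_{C^{2,\lambda}_{\mu,\sigma}(\R^2)}\le1$ forces $|D^j\phi|\lesssim e^{-\sigma|t|}$ for $j=0,1,2$ there with $|t|\gtrsim\delta/\ep$, these too are $\le Ce^{-\sigma\delta/2\ep}$ in $C^{0,\lambda}_K(\R^2)$. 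Hence the map $T(\psi):=\mathcal{L}_\ep^{-1}\big(-(1-\zeta_3)S(\w)-(1-\zeta_4)N_1(\psi+\zeta_3\phi)-2\nabla_x\zeta_3\cdot\nabla_x\phi-\phi\Delta_x\zeta_3-\ep\phi\frac{\nabla_{\bar{x}}a}{a}\cdot\nabla_x\zeta_3\big)$ is well defined, and since $N_1$ is quadratic, $\|N_1(\psi+\zeta_3\phi)\|_{C^{0,\lambda}_K(\R^2)}\lesssim(\|\psi\|_X+e^{-\sigma\delta/\ep})^2$, so $T$ maps the ball $\{\|\psi\|_X\le Ce^{-\sigma\delta/2\ep}\}$ into itself and is a contraction there, with Lipschitz constant $\mathit{O}(e^{-\sigma\delta/2\ep})+\mathit{O}(\ep)$. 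Banach's theorem yields the unique $\psi=\Psi(\phi)$ with the stated bound, and subtracting the equations for $\Psi(\phi_1)$ and $\Psi(\phi_2)$ — whose right-hand sides differ only through terms linear in $\phi_1-\phi_2$ supported in the thin transition region (hence carrying a factor $e^{-\sigma\delta/2\ep}$ after weighting) and through a difference of the quadratic terms $N_1$ — and applying $\mathcal{L}_\ep^{-1}$ gives \eqref{LipschitzPsi} after absorbing the small $\|\Psi(\phi_1)-\Psi(\phi_2)\|_X$ term on the left.

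I expect the real obstacle to be the uniform-in-$\ep$ linear theory of the second paragraph: constructing a single global barrier that controls $\mathcal{L}_\ep$ both in the inner region (Fermi coordinates, with the $\mathit{O}(\ep)$ curvature and drift corrections of \eqref{LaplacianXeph}-\eqref{GradientsXeph}) and in the genuinely two-dimensional outer region, and verifying that the two regimes are compatible precisely because $b_1^2+b_2^2<(\sqrt2-\tau)/2$ and $\sigma<\sqrt2$. Once this estimate is in hand, the remainder is the routine contraction-mapping bookkeeping described above.
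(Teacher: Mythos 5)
Your proposal is correct and follows essentially the same route as the paper. The paper also proves a uniform-in-$\ep$ linear estimate (Lemma~\ref{LemmaOuterProblem}) via a sub-/supersolution argument using precisely the weight $K^{-1}$ (in the form $e^{R_0}\|\psi\|_\infty\{\zeta_3\,e^{-\sigma|t|/2}(1+|\ep s|)^{-\mu}+(1-\zeta_3)e^{-b_1|x_1|-b_2|x_2|}\}$) as a barrier, together with the maximum principle and interior Schauder estimates, and then sets up the fixed point problem $\psi=\Upsilon(g_1+G(\psi))$ with the same decomposition of the right-hand side into a $\phi$-dependent datum $g_1$ (the $(1-\zeta_3)S(\w)$ and cutoff-commutator terms, all weighted-exponentially small) and a quadratic term $G$; the Lipschitz estimate \eqref{LipschitzPsi} is obtained exactly as you describe, by subtracting the fixed point equations and absorbing the small self-term. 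The only cosmetic difference is that the paper delegates the details of the linear lemma to [9, Lemma 7.1], whereas you sketch the barrier computation directly; the substance is the same.
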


\medskip
{\bf 4.2 Solving the Nonlinear Projected Problem.} Using
proposition \ref{LemmaNonlinearPsiEq}, we solve \eqref{ProofThmEq10}
replacing $\psi$ with the nonlocal operator $\psi=\Psi(\phi)$.
Setting
\begin{equation*}
\N(\phi):=\B(\phi)+\ep \dfrac{\nabla_{\bar{x}} a}{a}\nabla_{x}
\phi+[f'(u_1)-f'(w)]\phi+\zeta_4[f'(u_1)-f'(H)]\Psi(\phi)+
\zeta_4N_1(\Psi(\phi)+\phi)
\end{equation*}
our problem is reduced to find a solution $\phi$ to the following
nonlinear, nonlocal problem
\begin{align}
\partial_{tt}\phi+\partial_{ss}\phi+f'(w)\phi=
-\tilde{S}(u_1)-\N(\phi)\;\text{ in }\; \R\times\R.
\label{NonlocalEquation}
\end{align}

Before solving \eqref{NonlocalEquation}, we consider the problem of
finding a $(\phi,c)$  a solution to the following nonlinear
projected problem
\begin{equation}\label{ProjectedProblem}
\scriptstyle{(NPP)}\;\left\{
\begin{aligned}
&\partial_{tt}\phi+\partial_{ss}\phi+f'(w)\phi=
-\tilde{S}(u_1)-\N(\phi)+c(s)w'(t) \;\text{ in }\; \R\times\R\\[0.2cm]
&\int_{\R}\phi(s,t)w'(t)dt=0, \quad\forall s\in\R.
\end{aligned}\right.
\end{equation}

Solving problem \eqref{ProjectedProblem} amounts to eliminate the
part of the right hand side in \eqref{NonlocalEquation}, that do not
contribute to the projections onto $w'(t)$, namely
$\int_{\R}[\tilde{S}(u_1)+N(\phi)]w'(t)dt$.

Since, we have that
\begin{equation*}
\left\|\tilde{S}(u_1)+\ep^2\J_{a}[h](\ep s)\cdot
w'(t)-\ep^3\left[k^3(\ep
s)+\frac{1}{2}\partial_{tt}\left(\frac{\partial_{t}a}{a}\right) (\ep
s,0)\right]\hat{c}w'(t)\,
\right\|_{C^{0,\lambda}_{\mu,\sigma}(\R^2)}\leq C\ep^4
\end{equation*}
where
$$
\hat{c}=\|w'\|^{-2}_{L^2(\R)}\int_{\R}t^2w'(t)^2dt
$$
and due to the fact that $\N(\phi)$ defines a
contraction within a ball centered at zero with radius $\mathit{O}(\ep^4)$
in norm $C^1$, we conclude the existence of a unique small solution
of problem \eqref{ProjectedProblem} whose size is $\mathit{O}(\ep^4)$ in
this norm. This solution $\phi$ turns out to define an operator in
$h$, namely $\phi=\Phi(h)$, which exhibits a Lipschitz character in
norms $\| \cdot \|_{C^{2,\lambda}_{\mu,\sigma}(\R^2)}$. We collect
the discussion in the following proposition.

\begin{prop}{\label{PropNonlinearTheory}
Given $\lambda\in(0,1),\ \mu\in(0,2+\alpha]$ and
$\sigma\in(0,\sqrt{2})$, there exists a constant $K>0$ such that the
nonlinear projected problem \eqref{ProjectedProblem} has a unique
solution $\phi=\Phi(h)$ with
\begin{equation*}
\|\phi\|_{C^{2,\lambda}_{\mu,\sigma}(\R^2)}\leq
K\ep^4
\end{equation*}
Besides $\Phi$ has small a Lipschitz dependence on $h$ satisfying
condition \eqref{BallConditionH1}, in the sense
\begin{equation}
\|\Phi(h_1)-\Phi(h_2)\|_{C^{2,\lambda}_{\mu,\sigma}(\R^2)}\leq
C\ep^3\|h_1-h_2\|_{C^{2,\lambda}_{\mu,*}(\R)}\label{LipschitzPhi}
\end{equation}
for any $h_1,h_2\in C^{2,\lambda}_{loc}(\R)$ with
$\|h_i\|_{C^{2,\lambda}_{\mu,*}(\R)}\leq \K\ep$. }
\end{prop}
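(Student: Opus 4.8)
The plan is to recast the nonlinear projected problem \eqref{ProjectedProblem} as a fixed point equation for a contraction on a small ball of $C^{2,\lambda}_{\mu,\sigma}(\R^2)$, and then to read off the Lipschitz dependence on $h$ from the explicit size estimates recorded above.

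First I would set up the linear theory for the model operator. Since $w'$ spans the bounded kernel of the one-dimensional operator $\partial_{tt}+f'(w(t))$ and $\Gamma$ is non-degenerate, the linear projected problem
\[
\partial_{tt}\phi+\partial_{ss}\phi+f'(w(t))\phi=g+c(s)w'(t)\ \text{ in }\ \R^2,\qquad \textstyle\int_{\R}\phi(s,t)w'(t)\,dt=0\quad\forall s,
\]
is uniquely solvable for every $g$ with $\|g\|_{C^{0,\lambda}_{\mu,\sigma}(\R^2)}<\infty$, furnishing a bounded solution operator $(\phi,c)=\mathcal{A}(g)$ with $\|\phi\|_{C^{2,\lambda}_{\mu,\sigma}(\R^2)}\leq C\|g\|_{C^{0,\lambda}_{\mu,\sigma}(\R^2)}$; this invertibility statement is established in the next section and is the genuinely hard analytic ingredient. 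The key structural remark is that $\mathcal{A}$ annihilates every function of the form $d(s)w'(t)$ (take $\phi\equiv 0$, $c=-d$), so in view of \eqref{Su1tilde} the leading terms $-\ep^2\J_a[h](\ep s)w'(t)$ and $-\ep^3[k^3(\ep s)+\tfrac12\partial_{tt}(\partial_t a/a)(\ep s,0)]\hat c\,w'(t)$ of $\tilde S(u_1)$ do not contribute to $\mathcal{A}(-\tilde S(u_1))$. Thus \eqref{ProjectedProblem} is equivalent to the fixed point equation $\phi=\mathcal{F}(\phi):=\mathcal{A}\bigl(-\tilde S(u_1)-\N(\phi)\bigr)$, and I would apply the contraction mapping principle on the ball $\mathcal{B}_K:=\{\phi:\ \int_{\R}\phi(\cdot,t)w'(t)\,dt\equiv 0,\ \|\phi\|_{C^{2,\lambda}_{\mu,\sigma}(\R^2)}\leq K\ep^4\}$.

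Two estimates must be checked. \emph{(i) The forcing has size $\mathit{O}(\ep^4)$:} after discarding the $w'(t)$-multiples, what survives of $\tilde S(u_1)$ is $\ep^4 Q(\ep s)\psi_0'(t)h''(\ep s)$, $\zeta_0\ep^3(t+h)A_0(\ep s,\ep(t+h))h''(\ep s)w''(t)$ and $\zeta_0 R_1$; using the decay \eqref{IntrodEq15}, \eqref{IntrodEq14} of $k,A_0,Q$, the exponential bounds on $\psi_0,\psi_1$, the estimate \eqref{R1Size} on $R_1$, the a priori bound \eqref{BallConditionH1} (which gives $|h''(\ep s)|\leq \K\ep(1+|\ep s|)^{-2-\alpha}$), and $\sigma<\sqrt 2$, $\mu\leq 2+\alpha$, one obtains $\|\mathcal{A}(-\tilde S(u_1))\|_{C^{2,\lambda}_{\mu,\sigma}(\R^2)}\leq C_0\ep^4$ with $C_0=C_0(a,\Gamma,\alpha,\K)$. \emph{(ii) $\N$ is a contraction with small constant on $\mathcal{B}_K$:} estimating term by term, Lemma \ref{LemmaEuclideanLaplacian} shows every coefficient of $\Delta_x-\partial_{tt}-\partial_{ss}$ carries a factor $\ep$, and on $\supp\zeta_0$, where $|t|\leq\delta/\ep+c_0|s|$, the weight $e^{\sigma|t|}$ with $\sigma<\sqrt 2$ absorbs the growth of the coefficients carrying a $(t+h)$ prefactor against the decay $e^{-\sigma|t|}$ of the derivatives of $\phi$, so $\|\B(\phi)\|_{C^{0,\lambda}_{\mu,\sigma}}\leq C(\ep+\delta+c_0)\|\phi\|_{C^{2,\lambda}_{\mu,\sigma}}$; Lemma \ref{LemmaEuclideanGradients} gives a plain $\ep$ for $\ep(\nabla_{\bar x}a/a)\cdot\nabla_x\phi$; since $u_1-w=\varphi_1=\mathit{O}(\ep^2(1+|\ep s|)^{-2-\alpha}e^{-\sigma|t|})$ one has $\|[f'(u_1)-f'(w)]\phi\|\leq C\ep^2\|\phi\|$; and the $\Psi(\phi)$-terms are, by Proposition \ref{LemmaNonlinearPsiEq}, exponentially small and Lipschitz in $\phi$ with constant $Ce^{-\sigma\delta/2\ep}$, while $\zeta_4 N_1(\Psi(\phi)+\phi)$ is quadratic. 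Choosing $K:=2C_0$ and then $\delta,c_0,\ep_0$ small so that the total Lipschitz constant of $\phi\mapsto\mathcal{A}(-\N(\phi))$ on $\mathcal{B}_K$ is at most $\tfrac12$, the map $\mathcal{F}$ sends $\mathcal{B}_K$ into itself and is a contraction, whence the unique fixed point $\phi=\Phi(h)$, which is the first assertion.

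For the Lipschitz dependence on $h$ I would write $\phi_i=\mathcal{F}_i(\phi_i)$, the subscript recording the dependence of $\tilde S(u_1)$, of the Fermi-coordinate coefficients in $\B$ and in $\ep(\nabla_{\bar x}a/a)\cdot\nabla_x(\cdot)$, and of $\Psi$, on $h_i$; subtracting and applying $\mathcal{A}$,
\[
\phi_1-\phi_2=\mathcal{A}\bigl(\tilde S(u_1^{(2)})-\tilde S(u_1^{(1)})\bigr)+\mathcal{A}\bigl(\N_2(\phi_1)-\N_1(\phi_1)\bigr)+\mathcal{A}\bigl(\N_2(\phi_1)-\N_2(\phi_2)\bigr).
\]
From \eqref{Su1tilde}, in which each surviving summand depends on $h,h',h''$ with a prefactor of size $\mathit{O}(\ep^3)$ in $C^{0,\lambda}_{\mu,\sigma}$ (the $w'(t)$-parts being killed by $\mathcal{A}$), and from the fact that in $\N_1(\phi_1)-\N_2(\phi_1)$ the $t$-growth cancels, leaving differences $h_1-h_2$, $h_1'-h_2'$, $h_1''-h_2''$ each carrying at least one power of $\ep$ (plus an exponentially small $\Psi$-contribution via \eqref{LipschitzPsi}), one gets $\|\phi_1-\phi_2\|_{C^{2,\lambda}_{\mu,\sigma}}\leq C\ep^3\|h_1-h_2\|_{C^{2,\lambda}_{\mu,*}(\R)}+\tfrac12\|\phi_1-\phi_2\|_{C^{2,\lambda}_{\mu,\sigma}}$, which gives \eqref{LipschitzPhi} after absorption. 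I expect the main obstacle, besides the linear estimate for $\mathcal{A}$ deferred to the next section, to be the bookkeeping in the $\B(\phi)$-term: its coefficients with a $(t+h)$ factor reach size $\mathit{O}(\delta/\ep+c_0|s|)$ on $\supp\zeta_0$, so one must use very carefully the single power of $\ep$ in \eqref{LaplacianXeph} together with the strict inequality $\sigma<\sqrt 2$ (and the smallness of $\delta$ and $c_0$) to keep $\B$ a small perturbation, the coupling with $\psi=\Psi(\phi)$ being harmless only because Proposition \ref{LemmaNonlinearPsiEq} furnishes exponentially small bounds both in $\phi$ and in $h$.
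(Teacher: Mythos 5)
Your proposal follows essentially the same strategy as the paper: invoke the linear projected solvability theory (Proposition \ref{PropLinearTheory}), observe that the solution operator annihilates multiples of $w'(t)$ so that the $\mathit{O}(\ep^2)$ and $\mathit{O}(\ep^3)$ parts of $\tilde S(u_1)$ drop out, verify that $\N$ is Lipschitz with small constant on a ball of radius $\mathit{O}(\ep^4)$ (the paper's Claim \ref{LipschitzCharacterN}), and close by contraction; the Lipschitz-in-$h$ bound is then read off by the same three-term splitting. Your remark that the $\ep(t+h)A_0\partial_{ss}$ piece of $\B$ only gives a Lipschitz factor $\mathit{O}(\ep+\delta+c_0)$ (rather than the paper's stated $C\ep$, since the weight $e^{\sigma|t|}$ offers no gain against the linear-in-$t$ coefficient when both sides carry the same $\sigma$) is actually a slightly more careful accounting than the paper's, but since $\delta$ and $c_0$ are fixed and small this does not change the conclusion.
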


The proof of this proposition is left to section 5, where a complete
study of the linear theory needed to solve is discussed.

\medskip
{\bf 4.3 Adjusting the nodal set. } In order to conclude the proof
of Theorem~\ref{TheoremMainResult}, we have to adjust the parameter
function $h$ so that the nonlocal term
\begin{align}
c(s)\int_{\R}|w'(t)|^2dt=\int_{\R}\tilde{S}(u_1)(\ep s,t)
w'(t)dt+\int_{\R}\N(\Phi(h))(s,t)w'(t)dt\label{ProofThmEq12}
\end{align}
becomes identically zero, and consequently we obtain a genuine
solution to equation \eqref{IntrodEq12}.

Setting $c_{*}:=\int_{\R}|w'(t)|^2dt$, using
expression~\eqref{Su1tilde}, and carrying out the same computation
we did in \eqref{ApproximationOfASolutionEq3}, we obtain that
\begin{equation*}
\int_{\R}\tilde{S}(u_1)(\ep s,t)w'(t)dt=-c_*\ep^2\J_{a}[h](\ep
s)+c_*\ep^2G_1(h)(\ep s)
\end{equation*}
where
\begin{align}
c_*G_1(h)(\ep s)&:= -\ep\left[k^3(\ep
s)+\frac{1}{2}\partial_{tt}\left(\frac{\partial_{t}a}{a}\right) (\ep
s,0)\right]\frac{\int_{\R}\tau^2
w'(\tau)^2d\tau}{\int_{\R}w'(\tau)^2d\tau}w'(t)\notag\\
&+\ep h''(\ep s)\int_{\R}\zeta_0(t+h)A_0(\ep s,\ep(t+h))w''(t)w'(t)dt\notag\\
&+\ep^2Q(\ep s)h''(\ep
s)\int_{\R}\psi'_0(t)w'(t)dt+\ep^{-2}\int_{\R}\zeta_0\ R_1(\ep
s,t,h,h')w'(t)dt\label{ProofThmEq14}
\end{align}
and we recall that $R_1$ is of size $O(\ep^4)$ in the sense of
\eqref{R1Size}. Thus setting
\begin{align}
c_*G_2(h)(\ep s):=\ep^{-2}\int_{\R}\N(\Phi(h))(s,t)w'(t)dt,\quad
\G(h)(\ep s):=G_1(h)(\ep s)+G_2(h)(\ep s)\label{ProofThmEq15}
\end{align}
it turns out that equation~\eqref{ProofThmEq12} is equivalent to
$$
c(s)\cdot c_*= -c_*\ep^2\J_{a,\Gamma}[h](\ep s)+c_*\ep^2G_1(h)(\ep
s)+c_*\ep^2G_2(h)(\ep s)
$$

Therefore the condition $c(s)=0$ is equivalent to the following
nonlinear problem on $h$
\begin{align}
\J_{a,\Gamma}[h](\ep s)=h''(\ep s)+\dfrac{\partial_{\bs{s}}a(\ep
s,0)}{a(\ep s,0)}h'(\ep s)-Q(\ep s)h(\ep s)=\G[h](\ep s), \quad
\hbox{in }\R \label{JacobiEquation}
\end{align}
Consequently, we will have proved Theorem~\ref{TheoremMainResult},
if we find a function $h$, solving
equation~\eqref{JacobiEquation}.

Hence, we need to devise a corresponding solvability theory for the
linear problem
\begin{align}
\qquad \J_{a}[h](\bs{s})=f(\bs{s}),\quad \hbox{in
}\R\label{LinearProjectedJacobiProblem}.
\end{align}

The next result addresses this matter.

\begin{prop}{\label{PropLinearJacobiTheory}
Given $\alpha>0$, $\lambda\in(0,1)$, and a function $f$ with
$\|f\|_{C^{0,\lambda}_{2+\alpha,*}(\R)}<\infty$, assume that
$\Gamma$ is a smooth curve satisfying \eqref{IntrodEq13}. If,
$\Gamma$ is nondegenerate respect to the potential $a$ and
conditions \eqref{smoothnessfora}-\eqref{IntrodEq14} hold, then
there exists a unique bounded solution $h$ of problem
\eqref{LinearProjectedJacobiProblem}, and there exists a positive
constant $C=C(a,\Gamma,\alpha)$ such that
\begin{equation*}
\|h\|_{C^{2,\lambda}_{2+\alpha,*}(\R)}\leq C\|f\|_{C^{0,\lambda}_{2+\alpha,*}(\R)}
\end{equation*}
with the norms defined in \eqref{DefNorm3}-\eqref{DefNorm4}.
}\end{prop}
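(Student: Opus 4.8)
The plan is to construct $h$ explicitly by variation of parameters built from the kernel of $\J_{a,\Gamma}$, and then to read the weighted bound off the asymptotics of the kernel elements. First I would invoke Proposition~\ref{PropConstructionKernel} to obtain two linearly independent $h_1,h_2$ solving $\J_{a,\Gamma}[h_i]=0$, with $h_1$ bounded together with its weighted derivative as $\bs{s}\to+\infty$ and of at most linear growth as $\bs{s}\to-\infty$, and $h_2$ exhibiting the mirror behaviour. Since
$$
\J_{a,\Gamma}[h]=\frac{1}{a(\bs{s},0)}\bigl[(a(\bs{s},0)h')'-a(\bs{s},0)Q(\bs{s})h\bigr]
$$
is in Sturm--Liouville form, the modified Wronskian $W:=a(\bs{s},0)\bigl(h_1h_2'-h_1'h_2\bigr)$ is a constant, nonzero by linear independence; after a harmless normalization of $h_1$ we may take $W=1$, so that the variation-of-parameters solution, with the integration constants placed at $\pm\infty$, is
\begin{equation*}
h(\bs{s})=-h_1(\bs{s})\int_{-\infty}^{\bs{s}}a(\xi,0)h_2(\xi)f(\xi)\,d\xi-h_2(\bs{s})\int_{\bs{s}}^{+\infty}a(\xi,0)h_1(\xi)f(\xi)\,d\xi,
\end{equation*}
exactly the formula appearing in Proposition~\ref{InvertibilityJacobiOperator}.

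Next I would check that $h$ is well defined and solves the equation. Absolute convergence of the two integrals is where $\alpha>0$ is used: near $+\infty$ the factor $h_2$ grows at most linearly, $a$ is bounded, and $|f(\xi)|\le C(1+|\xi|)^{-2-\alpha}$, so $a h_2 f=\mathit{O}(|\xi|^{-1-\alpha})$ is integrable there, and symmetrically $a h_1 f$ is integrable near $-\infty$ because $h_1$ remains bounded there. Differentiating, the two boundary contributions $-h_1\,ah_2f$ and $-h_2(-ah_1f)$ cancel, leaving
$$
h'(\bs{s})=-h_1'(\bs{s})A(\bs{s})-h_2'(\bs{s})B(\bs{s}),\qquad A(\bs{s}):=\int_{-\infty}^{\bs{s}}ah_2f,\quad B(\bs{s}):=\int_{\bs{s}}^{+\infty}ah_1f,
$$
and a second differentiation together with $\J_{a,\Gamma}[h_i]=0$ and $W=1$ yields $\J_{a,\Gamma}[h]=f$.

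The estimate then follows by inspecting the four products in $h$ and $h'$ separately on $\{\bs{s}\ge 0\}$ and on $\{\bs{s}\le 0\}$. As $\bs{s}\to+\infty$: $A$ has a finite limit with $A(+\infty)-A(\bs{s})=\mathit{O}(|\bs{s}|^{-\alpha})$, while $B(\bs{s})=\mathit{O}(|\bs{s}|^{-1-\alpha})$, both by the decay of $f$ and the bounds on $h_1,h_2$; hence $h_1A=\mathit{O}(1)$ and $h_2B=\mathit{O}(|\bs{s}|^{-\alpha})$, giving $h=\mathit{O}(1)$, while $h_1'A=\mathit{O}(|\bs{s}|^{-1-\alpha})$ and $h_2'B=\mathit{O}(|\bs{s}|^{-1-\alpha})$, giving $(1+|\bs{s}|)^{1+\alpha}h'=\mathit{O}(1)$. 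The region $\bs{s}\to-\infty$ is identical with the roles of $h_1$ and $h_2$ interchanged, and on a fixed compact interval everything is plainly bounded. For the second derivative I would use the equation itself, $h''=f-\frac{\partial_{\bs{s}}a(\bs{s},0)}{a(\bs{s},0)}h'+Q(\bs{s})h$; since $\partial_{\bs{s}}a/a$ and $Q$ decay like $(1+|\bs{s}|)^{-1-\alpha}$ and $(1+|\bs{s}|)^{-2-\alpha}$ by conditions \eqref{IntrodEq15} and \eqref{IntrodEq14}, and $a\in C^5$ provides the Hölder regularity of these coefficients, one obtains $(1+|\bs{s}|)^{2+\alpha}\|h''\|_{C^{0,\lambda}(\bs{s}-1,\bs{s}+1)}\le C\|f\|_{C^{0,\lambda}_{2+\alpha,*}(\R)}$; adding the three pieces gives the asserted bound. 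Uniqueness is immediate, since the difference of two bounded solutions is a bounded element of the kernel of $\J_{a,\Gamma}$, hence zero by nondegeneracy (a combination $c_1h_1+c_2h_2$ is bounded only for $c_1=c_2=0$). The only real obstacle is the bookkeeping in this last step: one must make sure every linearly growing kernel factor is always multiplied by a tail integral decaying strictly faster than $|\bs{s}|^{-1}$, which is precisely what $\alpha>0$ secures; the rest is routine.
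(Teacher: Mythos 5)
Your proof is correct and follows exactly the route the paper has in mind: Proposition~\ref{InvertibilityJacobiOperator} records the very same variation-of-parameters formula, built from the kernel elements $h_1,h_2$ constructed in Proposition~\ref{PropConstructionKernel}, and the paper explicitly leaves the verification to the reader, which is what you have carried out, including the observation that the Sturm--Liouville Wronskian $a(h_1h_2'-h_1'h_2)$ is constant, the cancellation of boundary terms upon differentiation, the weighted estimates obtained by splitting into the tail and bulk parts of the two integrals, the recovery of the $C^{2,\lambda}_{2+\alpha,*}$ bound directly from the ODE, and uniqueness via nondegeneracy. One small slip worth fixing: you justify the integrability of $a\,h_1 f$ near $-\infty$ by saying that $h_1$ ``remains bounded there,'' but by your own convention (and by \eqref{PropConstrucionKernelEq2}) $h_1$ is the kernel element that grows linearly as $\bs{s}\to-\infty$; the correct reason is the same one you give for the other tail, namely the linear growth of $h_1$ is offset by the $(1+|\xi|)^{-2-\alpha}$ decay of $f$, yielding an $\mathit{O}(|\xi|^{-1-\alpha})$ integrand, which is integrable precisely because $\alpha>0$.
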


In section 6, we study in detail the proof of this proposition. For
the time being, let us note that, $\G$ is a small operator of size
$\mathit{O}(\ep)$ uniformly on functions $h$ satisfying
\eqref{BallConditionH1}. Hence Proposition
\ref{PropLinearJacobiTheory} plus the contraction mapping principle
yield the next result, which ensures the solvability of the
nonlinear Jacobi equation. Its detailed proof can be found in
section 7.

\begin{prop}{\label{PropNonlinearJacobiTheory}
Given $\alpha>0$ and $\lambda\in(0,1)$, there exist a positive
constant $\K>0$ such that for any $\ep>0$ small enough the following
holds. There is a unique solution $h$ of \eqref{JacobiEquation} on
the region \eqref{DefNorm4}, namely
$\|h\|_{C^{2,\lambda}_{2+\alpha,*}(\R)}\leq \K\ep$. }
\end{prop}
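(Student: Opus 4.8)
The plan is to solve the nonlinear Jacobi equation \eqref{JacobiEquation} by recasting it as a fixed point problem for a contraction on the closed ball $\mathcal{B}_{\K}:=\{\,h\in C^{2,\lambda}_{2+\alpha,*}(\R): \|h\|_{C^{2,\lambda}_{2+\alpha,*}(\R)}\leq \K\ep\,\}$. The essential ingredient is the invertibility result of Proposition \ref{PropLinearJacobiTheory}: for any $f$ with $\|f\|_{C^{0,\lambda}_{2+\alpha,*}(\R)}<\infty$ there is a unique bounded solution $\J_{a}^{-1}[f]$ of $\J_{a}[h]=f$ satisfying $\|\J_{a}^{-1}[f]\|_{C^{2,\lambda}_{2+\alpha,*}(\R)}\leq C\|f\|_{C^{0,\lambda}_{2+\alpha,*}(\R)}$. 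Thus equation \eqref{JacobiEquation} is equivalent to the fixed point identity $h=\J_{a}^{-1}\bigl[\ep^2\G[h]\bigr]=:\mathcal{A}(h)$, once we absorb the scaling $\ep s\mapsto \bs{s}$ correctly; note that composing with the dilation $s\mapsto \ep s$ does not spoil membership in the weighted spaces, it only affects constants, since the weights $(1+|\ep s|)^{2+\alpha}$ and the arguments match up in \eqref{JacobiEquation} and \eqref{LinearProjectedJacobiProblem}.

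First I would record the two quantitative facts about $\G$ that make the scheme close. Recall $\G[h]=G_1[h]+G_2[h]$ with $G_1$ defined in \eqref{ProofThmEq14} and $c_*G_2(h)(\ep s)=\ep^{-2}\int_{\R}\N(\Phi(h))(s,t)w'(t)dt$ from \eqref{ProofThmEq15}. The term $G_1[h]$ is, by inspection of \eqref{ProofThmEq14} together with the decay hypotheses \eqref{IntrodEq15}-\eqref{IntrodEq14} on $k$ and $a$, and the exponential decay of $w',w'',\psi_0'$, bounded in $C^{0,\lambda}_{2+\alpha,*}(\R)$ by a fixed constant (independent of $\ep$) plus a term controlled by $\|h''\|_{C^{0,\lambda}_{2+\alpha,*}(\R)}\leq\K\ep$ coming from the $h''$-factors; in particular $\|G_1[h]\|_{C^{0,\lambda}_{2+\alpha,*}(\R)}\leq C_1(1+\K\ep)$. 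For $G_2$, one invokes Proposition \ref{PropNonlinearTheory}: $\Phi(h)$ has size $\mathit{O}(\ep^4)$ in $C^{2,\lambda}_{\mu,\sigma}(\R^2)$, and since $\N$ is (by construction, cf.\ the discussion preceding Proposition \ref{PropNonlinearTheory}) a contraction on a ball of radius $\mathit{O}(\ep^4)$ with $\N(0)=\mathit{O}(\ep^4)$, the integral $\int_{\R}\N(\Phi(h))w'\,dt$ is $\mathit{O}(\ep^4)$ in the weighted norm, so $\|G_2[h]\|_{C^{0,\lambda}_{2+\alpha,*}(\R)}\leq C_2\ep^2$. Combining, $\|\G[h]\|_{C^{0,\lambda}_{2+\alpha,*}(\R)}\leq C_0$ uniformly on $\mathcal{B}_{\K}$ for small $\ep$, hence $\|\mathcal{A}(h)\|_{C^{2,\lambda}_{2+\alpha,*}(\R)}\leq C C_0\ep^2\leq\K\ep$ provided we fix $\K\geq C C_0$ and $\ep$ small; so $\mathcal{A}$ maps $\mathcal{B}_{\K}$ into itself.

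Next I would establish the contraction property. One needs $\|\G[h_1]-\G[h_2]\|_{C^{0,\lambda}_{2+\alpha,*}(\R)}\leq C\|h_1-h_2\|_{C^{2,\lambda}_{2+\alpha,*}(\R)}$ with a constant that becomes small after multiplication by $\ep^2$. For $G_1$ this follows because the only $h$-dependence enters through $h,h',h''$ multiplied by coefficients of size $\mathit{O}(1)$ with the required spatial decay, and through the smooth dependence of $A_0,\tilde B_0,\tilde C_0, R_1$ on $h,h'$; the Lipschitz estimate \eqref{R1Size} on $R_1$ is exactly what is needed here, giving a Lipschitz constant $\mathit{O}(1)$ (in fact $\mathit{O}(\ep)$ for the genuinely $\ep$-weighted pieces). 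For $G_2$ one combines the Lipschitz bound \eqref{LipschitzPhi} for $\Phi$, $\|\Phi(h_1)-\Phi(h_2)\|_{C^{2,\lambda}_{\mu,\sigma}}\leq C\ep^3\|h_1-h_2\|_{C^{2,\lambda}_{\mu,*}}$, with the Lipschitz character of $\N$ and of the projection against $w'$, and the conversion $\ep^{-2}$ in the definition of $G_2$, to get $\|G_2[h_1]-G_2[h_2]\|_{C^{0,\lambda}_{2+\alpha,*}(\R)}\leq C\ep\|h_1-h_2\|_{C^{2,\lambda}_{2+\alpha,*}(\R)}$; here one also uses $\|\cdot\|_{C^{2,\lambda}_{\mu,*}(\R)}\leq\|\cdot\|_{C^{2,\lambda}_{2+\alpha,*}(\R)}$ for $\mu\leq 2+\alpha$. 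Altogether $\|\mathcal{A}(h_1)-\mathcal{A}(h_2)\|_{C^{2,\lambda}_{2+\alpha,*}(\R)}\leq C\ep\|h_1-h_2\|_{C^{2,\lambda}_{2+\alpha,*}(\R)}\leq\tfrac12\|h_1-h_2\|_{C^{2,\lambda}_{2+\alpha,*}(\R)}$ for $\ep$ small, so $\mathcal{A}$ is a contraction and the Banach fixed point theorem yields a unique $h\in\mathcal{B}_{\K}$ with $\mathcal{A}(h)=h$, i.e.\ a solution of \eqref{JacobiEquation}, which is what was claimed.

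The main obstacle I anticipate is not the abstract fixed point argument but the bookkeeping needed to verify that $\G$ genuinely lands in $C^{0,\lambda}_{2+\alpha,*}(\R)$ with the stated uniformity: one must carefully track how each factor in \eqref{ProofThmEq14} — the curvature cube, the second-order tangential derivatives of $\log a$, the terms carrying $h''$, and the remainder $\ep^{-2}\int\zeta_0 R_1 w'$ — contributes to the weight $(1+|\ep s|)^{2+\alpha}$, using precisely the decay assumptions \eqref{IntrodEq15}-\eqref{IntrodEq14} (and the fact that $\psi_0,\psi_1$ and $w',w''$ decay exponentially in $t$, so the $t$-integrations are harmless). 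A secondary subtlety is ensuring the Lipschitz estimates for $G_2$ interact correctly with the $\ep^{-2}$ normalization, but \eqref{LipschitzPhi} carries an $\ep^3$, which safely absorbs it and leaves a net factor $\ep$. Once these weighted estimates are in hand, the contraction closes automatically.
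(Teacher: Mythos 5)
Your overall strategy — recast \eqref{JacobiEquation} as a fixed point equation using the invertibility of $\J_a$ from Proposition \ref{PropLinearJacobiTheory}, then show the resulting map is a contraction on the ball $\{\|h\|_{C^{2,\lambda}_{2+\alpha,*}(\R)}\leq \K\ep\}$ — is exactly the paper's. However, there is a genuine error in the bookkeeping that your two paragraphs paper over by accidentally compensating each other.

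The fixed point equation is \emph{not} $h=\J_a^{-1}[\ep^2\G[h]]$. Looking at the derivation just before \eqref{JacobiEquation}, both sides of $c(s)\,c_* = -c_*\ep^2\J_{a,\Gamma}[h](\ep s)+c_*\ep^2 G_1(h)(\ep s)+c_*\ep^2 G_2(h)(\ep s)$ carry an $\ep^2$, which cancels. The correct fixed point problem is simply $h=T(\G[h])$ where $T$ is the bounded inverse from Proposition \ref{PropLinearJacobiTheory}, with $\|T\|=\mathit{O}(1)$ and \emph{no} extra $\ep^2$ gain. To close the scheme you therefore need $\|\G[h]\|_{C^{0,\lambda}_{2+\alpha,*}(\R)}=\mathit{O}(\ep)$ uniformly on the ball, not the $\mathit{O}(1)$ bound you state. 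Your $\mathit{O}(1)$ estimate for $G_1$ comes from overlooking the explicit factor of $\ep$ multiplying the leading term $\left[k^3+\frac12\partial_{tt}\left(\frac{\partial_t a}{a}\right)\right]\hat{c}$ in \eqref{ProofThmEq14}; the paper's estimate \eqref{ReducedProblemEq5}--\eqref{ReducedProblemEq6} shows $\|G_1(0)\|\leq C\ep$ and hence $\|\G(0)\|\leq C\ep$, which is what makes $\|T(\G(h))\|\leq C\ep(1+\|h\|)\leq\K\ep$ go through. With your stated estimates, the spurious $\ep^2$ in the fixed point form hides the fact that $\|\G[h]\|=\mathit{O}(1)$ is too weak; and indeed the contraction estimate you write down at the end, $\|\mathcal{A}(h_1)-\mathcal{A}(h_2)\|\leq C\ep\|h_1-h_2\|$, is not consistent with the combination ``$\ep^2$ factor in $\mathcal{A}$'' together with ``Lipschitz constant $\mathit{O}(1)$ for $\G$'' you invoked earlier. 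Removing the spurious $\ep^2$ and correcting the size of $G_1$ to $\mathit{O}(\ep)$ (both easy repairs, and both exactly as in the paper's Section 7) makes your argument match the paper's and closes cleanly.
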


and this concludes the proof of Theorem \ref{TheoremMainResult}.

\medskip
The rest of the paper is devoted to give fairly detailed proofs of
every result stated in this section.


\section{Gluing reduction and solution to the projected problem}

This section is devoted to give fairly detailed proves of
propositions \ref{LemmaNonlinearPsiEq} and
\ref{PropNonlinearTheory}. In what follows, we refer to the notation
and to the objects introduced in sections 3 and 4.

\medskip
\subsection{The proof of proposition
\ref{LemmaNonlinearPsiEq}.} In this part we prove proposition
\ref{LemmaNonlinearPsiEq}. To do so, let us first consider the
linear problem
\begin{align}
\Delta_{x}\psi+\ep \dfrac{\nabla_{\bar{x}} a}{a}\nabla_x\psi
-W_{\ep}(x)\psi=g(x), \quad\text{ in }\quad \R^2
\label{LinearOuterEq}
\end{align}
where
$$
-W_{\ep}(x):=[(1-\zeta_4)f'(u_1)+\zeta_4f'(H(t))].
$$

Observe that the dependence in $\ep$ is implicit on the cut-off
function $\zeta_4$, defined in \eqref{ProofThmEq3}.

Let us observe that for any $\ep>0$ small enough, the term $W_{\ep}$
satisfies the global estimate $0<\beta_1<W_{\ep}(x)<\beta_2$ for a
certain positive constants $\beta_1,\beta_2$. In fact, we can chose
$\beta_1:=\sqrt{2}-\tau$ for any arbitrary small $\tau>0$. To
address the study of this equation, recall the definition of the
weighted norms:
$$ \|g\|_{L^{\infty}_{K}(\R^2)}:=\sup_{x\in\R^2}K(x)\|g\|_{L^{\infty}(B_1(x))}, \quad
\|g\|_{C^{0,\lambda}_{K}(\R^2)}:=\sup_{x\in\R^2}K(x)\|g\|_{C^{0,\lambda}(B_1(x))}$$
with $K$ is given by~\eqref{WeightFunction}.

\begin{lemma}{\label{LemmaOuterProblem}
For any $\lambda\in(0,1)$, there are numbers $C>0$, and $\ep_0>0$
small enough, such that for $0<\ep<\ep_0$ and any given continuous
function $g=g(x)$ with $\|g\|_{C^{0,\lambda}_{K}(\R^2)}<+\infty$,
the equation~\eqref{LinearOuterEq} has a unique solution
$\psi=\Psi(\phi)$ satisfying the a priori estimate:
\begin{equation*}
\|\psi\|_{X}:=\|D^2\psi\|_{C^{0,\lambda}_{K}(\R^2)}+\|D\psi\|_{L^{\infty}_{K}(\R^2)}+\|\psi\|_{L^{\infty}_{K}(\R^2)}\leq
C\|g\|_{C^{0,\lambda}_{K}(\R^2)}
\end{equation*}
}
\end{lemma}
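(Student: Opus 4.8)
The plan is to treat \eqref{LinearOuterEq} as a linear elliptic equation whose zeroth order coefficient $-W_\ep$ is strictly negative, and to produce the a priori estimate by a barrier argument, recover the $C^{2,\lambda}$ bound from interior Schauder estimates, and obtain existence by exhausting $\R^2$ by balls. Write $L_\ep := \Delta_x + \ep\frac{\nabla_{\bar x}a}{a}\cdot\nabla_x - W_\ep$. By hypothesis $W_\ep \ge \beta_1 = \sqrt2 - \tau > 0$, and by \eqref{smoothnessfora}--\eqref{IntrodEq14} the drift $\ep\frac{\nabla_{\bar x}a}{a}$ has size $\mathit{O}(\ep)$, so $L_\ep$ satisfies the maximum principle on every bounded domain. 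The heart of the argument is to check that $K^{-1}$ is a supersolution: $L_\ep(K^{-1}) \le -c\,K^{-1}$ globally for a fixed $c>0$ and all small $\ep$. On $\{\zeta_2=1\}$ one has $K^{-1}\asymp e^{-\sigma|t|/2}(1+|\ep s|)^{-\mu}$, and using the expansion \eqref{LaplacianXeph} for $\Delta_x$ in Fermi coordinates, $\partial_{tt}$ produces the factor $\sigma^2/4<1/2<\beta_1$, the $\partial_{ss}$ term together with the remaining lower order terms contribute $\mathit{O}(\ep)K^{-1}$ (using that $\mu$ is fixed and the decay \eqref{IntrodEq14}), and the Lipschitz corner of $e^{-\sigma|t|/2}$ at $t=0$ adds a favorable negative distributional term; hence $L_\ep(K^{-1})\le (\sigma^2/4-\beta_1+\mathit{O}(\ep))K^{-1}\le -cK^{-1}$. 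On $\{\zeta_2=0\}$, $K^{-1}\asymp e^{-b_1|x_1|-b_2|x_2|}$ and $\Delta_x(K^{-1})=(b_1^2+b_2^2)K^{-1}$ modulo favorable corner terms, with $b_1^2+b_2^2<(\sqrt2-\tau)/2<\beta_1$, so again $L_\ep(K^{-1})\le -cK^{-1}$. On the transition annulus $K$ is comparable to both model weights and $|\nabla\zeta_2|,|\Delta\zeta_2|$ are bounded, so the errors they generate are absorbed by the gap $\beta_1-\max\{\sigma^2/4,\,b_1^2+b_2^2\}>0$ once $\ep$ is small.

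\textbf{A priori estimate and uniqueness.} Since $\|g\|_{L^\infty_K}\le\|g\|_{C^{0,\lambda}_K}$ we have $|g(x)|\le\|g\|_{C^{0,\lambda}_K}K(x)^{-1}$, so $\bar\psi := c^{-1}\|g\|_{C^{0,\lambda}_K(\R^2)}K^{-1}$ satisfies $L_\ep(\bar\psi)\le -|g|\le L_\ep(\pm\psi)$; the comparison principle (applied on $B_R$, then letting $R\to\infty$) yields $|\psi|\le\bar\psi$, that is $\|\psi\|_{L^\infty_K(\R^2)}\le c^{-1}\|g\|_{C^{0,\lambda}_K(\R^2)}$. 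To control $D\psi$ and $D^2\psi$ I would apply interior Schauder estimates on each unit ball: $\|\psi\|_{C^{2,\lambda}(B_{1/2}(x))}\le C(\|\psi\|_{L^\infty(B_1(x))}+\|g\|_{C^{0,\lambda}(B_1(x))})$, with $C$ uniform in $x$ and $\ep$ because the coefficients of $L_\ep$ are bounded in $C^{0,\lambda}$ uniformly in $\ep$. Multiplying by $K(x)$ and using that $K$ is slowly varying (i.e. $K(y)\asymp K(x)$ for $y\in B_1(x)$, which holds since $\sigma,b_1,b_2$ are fixed constants and $(1+|\ep s|)^\mu$ varies slowly), one obtains $\|\psi\|_X\le C\|g\|_{C^{0,\lambda}_K(\R^2)}$. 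Uniqueness is immediate: a solution corresponding to $g\equiv0$ has $\|\psi\|_{L^\infty_K}=0$.

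\textbf{Existence.} Solve the Dirichlet problem $L_\ep\psi_R=g$ in $B_R$ with $\psi_R=0$ on $\partial B_R$ (by Lax--Milgram, since for $\ep$ small the drift is a small perturbation of a coercive bilinear form, together with Schauder regularity). The barrier $\bar\psi$ is still a supersolution on $B_R$, so $\|\psi_R\|_X\le C\|g\|_{C^{0,\lambda}_K(\R^2)}$ with $C$ independent of $R$; by a standard compactness argument (Arzel\`a--Ascoli plus a diagonal extraction) $\psi_R\to\psi$ in $C^2_{loc}(\R^2)$ along a subsequence, and $\psi$ solves \eqref{LinearOuterEq} with the same bound. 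The resulting solution operator $g\mapsto\psi$ is linear and bounded, which finishes the proof. The step I expect to be the main obstacle is the supersolution verification on the transition region, where $K$ switches between its Fermi-coordinate form and its Euclidean form: one must make precise that the two model weights are genuinely comparable there and that the terms produced by the cut-off $\zeta_2$ and by the mixed and first order parts of $\Delta_x$ in \eqref{LaplacianXeph} are dominated by the spectral gap; everything else is a routine barrier-plus-Schauder scheme.
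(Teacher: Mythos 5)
Your proposal takes essentially the same route as the paper, which treats this lemma by a sub-/super-solution (barrier) argument for the $L^\infty_K$ estimate followed by interior Schauder estimates and reference to Lemma~7.1 of \cite{9}. The barrier you propose, a multiple of $K^{-1}$, is effectively the same as the paper's $\psi_0(x)=e^{R_0}\|\psi\|_\infty\{\zeta_3[e^{-\sigma|t|/2}(1+|\ep s|)^{-\mu}]+(1-\zeta_3)e^{-b_1|x_1|-b_2|x_2|}\}$; the supersolution verification by regions (Fermi zone, Euclidean zone, transition annulus) and the use of the strict gap $\beta_1-\max\{\sigma^2/4,\,b_1^2+b_2^2\}>0$ match the paper's intent, as does the Schauder step with the slowly-varying weight and the exhaustion for existence.

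One place where the paper is more careful than your write-up is the comparison-on-$B_R$ step. For an arbitrary bounded solution $\psi$ (which is what you need for uniqueness and for the a priori bound), $\bar\psi=c^{-1}\|g\|_{C^{0,\lambda}_K}K^{-1}$ need not dominate $\psi$ on $\partial B_R$: $K^{-1}\to0$ at infinity while a priori you only know $\psi\in L^\infty$, so the maximum principle on $B_R$ does not close. The paper handles this precisely by augmenting the barrier with the growing term $\theta\,e^{\sqrt{\beta_1/2}(|x_1|+|x_2|)}$ (a supersolution because $\beta_1\le W_\ep$ up to $O(\ep)$) and working on an annulus $B_{R_1}\setminus B_{R_0}$, then sending $R_1\to\infty$ and $\theta\to0$. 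Your existence-by-exhaustion with zero Dirichlet data is fine as written, since there $\psi_R=0\le\bar\psi$ on $\partial B_R$; but for uniqueness and for the bound on a general bounded solution you should add this auxiliary growing supersolution, exactly as the paper does.
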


The proof of this lemma follows the same lines of lemma 7.1 in
\cite{9} with no significant changes. We leave details to the
reader, but we do comment on the estimate
\begin{align}
\|\psi\|_{L^{\infty}_{K}(\R^2)}\leq
C\|g\|_{C^{0,\lambda}_{K}(\R^2)}\label{FirstOuterEstimate}
\end{align}

for $\ep>0$ small enough and any bounded solution $\psi$ of
\eqref{LinearOuterEq}. It follows directly from a sub-supersolution
scheme, using that $b_1^2+b_2^2<(\sqrt{2}-\tau)/2$ and the fact that
the function
$$
\psi_0(x):=e^{R_0}\|\psi\|_{\infty}\cdot
\left\{\zeta_3(x)[e^{-\sigma|t|/2}(1+|\ep
s|)^{-\mu}]+(1-\zeta_3(x))e^{-b_1|x_1|-b_2|x_2|}\right\}
$$
ca be readily checked to be a positive supersolution of
\eqref{LinearOuterEq}, provided that $R_0>0$ sufficiently large.

Hence, we can use the maximum principle within the annulus
$B_{R_1}(\vec{0})\setminus B_{R_0}(\vec{0})$ with a barrier function
of the form $\psi_0+\theta e^{\sqrt{\beta_1/2}(|x_1|+|x_2|)}$ for
$\theta>0$ small, to find that
$$
K(x)|\psi(x)| \leq M \|\psi\|_{L^{\infty}(\R^2)}\leq
\tilde{M}\|g\|_{C^{0,\lambda}_{K}(\R^2)}, \quad x\in\R^2.
$$
from which estimate~\eqref{FirstOuterEstimate} follows.

Now we have all the ingredients need for the proof of proposition
\ref{LemmaNonlinearPsiEq}. Let us set $\psi:=\Upsilon(g)$ the
solution of equation~\eqref{LinearOuterEq} predicted by lemma
\ref{LemmaOuterProblem}. We can write problem \eqref{ProofThmEq8} as
a fixed point problem in the space $X$ of functions $\psi\in
C^{2,\lambda}_{loc}(\R^2)$ with $\|\psi\|_{X}<\infty$, as
\begin{align}
\psi=\Upsilon(g_1+G(\psi)),\quad \psi\in
X\label{FixedPointPsiProblem}
\end{align}
where
$$
g_1:=(1-\zeta_3)S(\w)+2\nabla_x\zeta_3\nabla_x\phi+\phi\Delta_x\zeta_3
+\ep\phi\dfrac{\nabla_{\bar{x}} a}{a}\nabla_x\zeta_3,
$$

$$
G(\psi):=(1-\zeta_4)N_1(\psi+\zeta_3\phi). \label{NonlinearityPsi}
$$

Consider $\mu\in(0,2+\alpha)$, $\sigma\in(0,\sqrt{2})$ and
$\alpha>0$ fixed and a function $h$ satisfying \eqref{DefNorm4}.
Consider also a function $\phi=\phi(s,t)$, satisfying
$\|\phi\|_{C^{2,\lambda}_{\mu,\sigma}(\R^2)}\leq 1$.

Note that the derivatives of $\zeta_3$ are nontrivial only within
the region $\rho_{\ep}-2<|t+h(\ep s)|<\rho_{\ep}-1$, with
$\rho_{\ep}$ defined in \eqref{ApproximationOfASolutionEq12}. Taking
into account the weight $K(x)$~\eqref{WeightFunction}, we find that
\begin{align*}
K(x)\left|2\nabla_x\zeta_3\nabla_x\phi+\phi\Delta_x\zeta_3+\ep\phi\dfrac{\nabla_{\bar{x}}
a}{a}\nabla_x\zeta_3\right|
&\leq C_a K(x)e^{-\sigma |t|}(1+|\ep s|)^{-\mu}\|\phi\|_{C^{2,\lambda}_{\mu,\sigma}(\R^2)}\\
&\leq C_a\ e^{-\sigma\delta/2\ep}e^{\sigma/2(-c_0|s|+2+|h|)}
\|\phi\|_{C^{2,\lambda}_{\mu,\sigma}(\R^2)}
\end{align*}

provided that
$$
c_0<\frac{b_2\delta}{a_2}, \qquad\frac{c_0\,\theta}{1-\theta}\leq
b_2
$$

conditions that holds, since we can take  $c_0>0$ small
enough,independent of $\ep>0$ and $\theta$ small depending maybe on
$c_0$. At the end, there are some constants $\tilde{c}_0$ and
$\tilde{\delta}>0$, depending on $\Gamma$ and $a(x,y)$, such that
the right hand side satisfies for
$x\in\R^2$$$\left\|2\nabla_x\zeta_3\nabla_x\phi+\phi\Delta_x\zeta_3+\phi\dfrac{\nabla_{\bar{x}}
a}{a}\nabla_x\zeta_3\right\|_{C^{0,\lambda}B(x,1)}\leq C_{a,\Gamma}\
e^{-\sigma\tilde{\delta}/\ep}e^{-\tilde{c}_0|x|}\|\phi\|_{C^{2,\lambda}_{\mu,\sigma}(\R^2)}$$where
these constants are explicitly $\tilde{\delta}:=\delta-c_0 a_2/b_2$,
$\tilde{c}_0:=\sigma\theta c_0/b_2$, and where we emphasize that
$C_{a,\Gamma}$ does not depend on $\ep$.

Expressions \eqref{ErrorGlobalApprox}-\eqref{ErrorGlobalApprox2} for
$S(\w)$ imply that
$\|S(\w)\|_{C^{0,\lambda}_{\mu,\sqrt{2}}(\R^2)}\leq C\ep^3$. In
par\-ticular, the exponential decay exhibited by
$w',w'',\psi_0,\psi_1$ in $t-$variable imply
$$|(1-\zeta_3)S(\w)|=|(1-\zeta_3)\zeta_3S(u_1)+(1-\zeta_3)E|
\leq C_a\ e^{-\sigma|t|}(1+|\ep s|)^{-2-\alpha}$$ Now since this
error term is vanishing everywhere but on the region
$\rho_{\ep}-2<|t+h(\ep s)|<\rho_{\ep}-1$, we can use the
definition~\eqref{WeightFunction} of the weight function $K(x)$ to
prove that
\begin{align*}
K(x)|(1-\zeta_3)S(\w)(x)|&\leq e^{\sigma|t|/2}(1+|\ep s|)^{\mu-2-\alpha}\ C_a e^{-\sigma|t|/2}e^{-(\sqrt{2}-\sigma/2)|t|}\\
&\leq C_ae^{-(\sqrt{2}-\sigma/2)(\delta/\ep+c_0|s|-|h|-2)} \leq
Ce^{-\sigma\tilde{\delta}/\ep}
\end{align*}
where we have used the expression~\eqref{ApproximationOfASolutionEq12} for $\rho_{\ep}$, and we set $\tilde{\delta}:=(\sqrt{2}/\sigma-1/2)\delta>>\delta/2$.\\
Further, the regularity in the $s-$variable of the functions
involved in $g_1$, imply that
$$ \|g_1\|_{C^{0,\lambda}_{K}(\R^2)}\leq C e^{-\sigma\delta/2\ep}$$
On the other hand, consider the set for $A>0$ large
\begin{align}
\Lambda=\{\psi\in X:\ \|\psi\|_{X}\leq A\cdot
e^{-\sigma\delta/2\ep}\} \label{SetLambda}
\end{align}
The definitions of $N_1$ in \eqref{DefN1} and $G$ in
\eqref{NonlinearityPsi}, lead us to the following computations
\begin{align*}
&(1-\zeta_4)|N_1(\Psi(\phi_1)+\zeta_3\phi_1)-N_1(\Psi(\phi_2)+\zeta_3\phi_2)|\leq\\[0.2cm] &C_{\w}(1-\zeta_4)\sup_{t\in(0,1)}|t\Psi(\psi_1)+(1-t)\Psi(\psi_2)+\zeta_3(t\phi_1+(1-t)\phi_2)|\cdot |\Psi(\psi_1)-\Psi(\psi_2)|
\end{align*}
together with
\begin{align*}
|G(\psi_1)-G(\psi_2)|&\leq (1-\zeta_4)\sup_{\xi\in(0,1)}\left|DN_1(\xi\psi_1+(1-\xi)\psi_2+\zeta_3\phi)[\psi_1-\psi_2]\right|\\
&\leq
C\|f''(\w)\|_{\infty}(1-\zeta_4)\sup_{\xi\in(0,1)}|\xi\psi_1+(1-\xi)\psi_2+\zeta_3\phi|\cdot
|\psi_1-\psi_2|
\end{align*}
The latter, plus the regularity in the $s-$variable leads the
Lipschitz character of $G$:
\begin{align*}
\|G(\psi_1)-G(\psi_2)\|_{C^{0,\lambda}_{K}(\R^2)}\leq
C\,A\,e^{-\sigma\delta/\ep}\|\psi_1-\psi_2\|_{C^{0,\lambda}_{K}(\R^2)}
\end{align*}
while
$$\|G(0)\|_{C^{0,\lambda}_{K}(\R^2)}\leq C_{\w}\|(1-\zeta_4)\zeta^2_3\phi^2\|_{C^{0,\lambda}_{K}(\R^2)} \leq C e^{-\sigma\delta/\ep}$$
In order to use the fixed point theorem, we need to estimate the
size of the nonlinear operator
\begin{align*}
\|\Upsilon(g_1+G(\psi))\|_{X}&\leq \|\Upsilon(g_1+G(\psi)-G(0))\|_{X}+
\|\Upsilon(G(0))\|_{X}\\[0.2cm]
&\leq C(\|g_1\|_{C^{0,\lambda}_{K}(\R^2)}+\|G(\psi)-G(0)\|_{C^{0,\lambda}_{K}(\R^2)}+\|G(0)\|_{C^{0,\lambda}_{K}(\R^2)})\\[0.2cm]
& \leq C( C_a\ e^{-\sigma\delta/2\ep}
+e^{-\sigma\delta/\ep}\|\psi\|_{C^{0,\lambda}_{K}(\R^2)})\\
&\leq Ce^{-\sigma\delta/2\ep}(1+\|\psi\|_{X})
\end{align*}
additionally, we also have
\begin{align*}
\|\Upsilon(g_1+G(\psi_1))-\Upsilon(g_1+G(\psi_2)) \|_{X}&\leq
C \|G(\psi_1)-G(\psi_2)\|_{C^{0,\lambda}_{K}(\R^2)}\\
&\leq C e^{-\sigma\delta/\ep}\|\psi_1-\psi_2\|_{X}
\end{align*}
where in both inequalities we used that $\Upsilon$ is a linear and
bounded operator.

This means that the right hand side of
equation~\eqref{FixedPointPsiProblem} defines a contraction mapping
on $\Lambda$ into itself, provided that the number $A$ in definition
\eqref{SetLambda} is taken large enough and
$\|\phi\|_{C^{2,\lambda}_{\mu,\sigma}}\leq 1$. Hence applying Banach
fixed point theorem follows the existence of a unique solution
$\psi=\Psi(\phi)\in\Lambda$.

In addition, it is direct to check that
\begin{equation*}
\|\Psi(\phi_1)-\Psi(\phi_2)\|_{X}\leq C_a\
e^{-\sigma\delta/2\ep}\|\phi_1-\phi_2\|_{C^{2,\lambda}_{\mu,\sigma}(\R)}+C\
e^{-\sigma\delta/\ep} \|\Psi(\phi_1)-\Psi(\phi_2)\|_{X}
\end{equation*}
from where the Lipschitz dependence \eqref{LipschitzPsi} of
$\Psi(\phi)$  follows and this concludes the proof of Lemma
\ref{LemmaNonlinearPsiEq}

\medskip
\subsection{The proof of proposition
\ref{PropNonlinearTheory}.} The purpose of the whole section is to
give a proof of proposition \ref{PropNonlinearTheory}, which deals
with the solvability of the nonlinear projected problem
\eqref{ProjectedProblem} for $\phi$.

\medskip
At the core of the proof of proposition \ref{PropNonlinearTheory},
is the fact that the heteroclinic solution $w(t)$ of the ODE
$$
w''(t) +w(t)(1 -w^2(t))=0, \quad w'(t)>0, \quad w(\pm\infty)=\pm1
$$
is $L^{\infty}-$nondegenerate in the sense of the following lemma.

\begin{lemma}{\label{LemmaBoundedKernelLinearizedOp}
Let $\phi$ be a bounded and smooth solution of the problem
\begin{equation*}
L(\phi)=0, \;\text{ in }\;\R^2
\end{equation*}
Then necessarily $\phi(s,t)=Cw'(t)$, with $C\in\R$. }
\end{lemma}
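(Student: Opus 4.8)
The plan is to divide out the translation zero mode $w'$ and then run a Liouville-type energy estimate on the resulting degenerate-elliptic equation. Here $L(\phi)=\partial_{tt}\phi+\partial_{ss}\phi+f'(w(t))\phi$, and differentiating the profile equation $w''+f(w)=0$ in $t$ gives $w'''+f'(w)w'=0$; thus the strictly positive function $w'$ is itself a (purely $t$-dependent) solution of $L(w')=0$, and the goal is to show that every bounded solution is a scalar multiple of it.

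\textbf{Step 1: reduction to divergence form.} First, since $w'>0$ is smooth, I set $\psi:=\phi/w'$, which is a smooth function on $\R^2$. Substituting $\phi=w'\psi$ and using $\phi_{tt}=w'''\psi+2w''\psi_t+w'\psi_{tt}$ together with $\phi_{ss}=w'\psi_{ss}$, the term $(w'''+f'(w)w')\psi$ drops out and $L(\phi)=0$ becomes $w'(\psi_{tt}+\psi_{ss})+2w''\psi_t=0$; multiplying by $w'$ this is exactly
\[
\partial_t\bigl((w')^2\psi_t\bigr)+\partial_s\bigl((w')^2\psi_s\bigr)=0\qquad\text{in }\R^2 .
\]
The decisive structural observation is that the weight $(w')^2$ records the boundedness of $\phi$: one has $(w')^2\psi^2=\phi^2\le\|\phi\|_{L^\infty(\R^2)}^2$ on all of $\R^2$, even though $\psi$ itself may grow like $e^{\sqrt 2\,|t|}$.

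\textbf{Step 2: Liouville argument.} Then, for $R>1$ I take $\chi_R$ the standard logarithmic cut-off on $\R^2$, equal to $1$ on $B_R$, vanishing outside $B_{R^2}$, with $\int_{\R^2}|\nabla\chi_R|^2\le C/\ln R$. Testing the divergence-form equation against $\chi_R^2\psi$ and integrating by parts (there is no boundary term since $\chi_R$ has compact support and all functions are smooth there) gives
\[
\int_{\R^2}\chi_R^2\,(w')^2|\nabla\psi|^2 \;=\; -2\int_{\R^2}\chi_R\,\psi\,(w')^2\,\nabla\chi_R\cdot\nabla\psi ,
\]
whose left-hand side is a finite number $A_R$. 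By Cauchy--Schwarz and $(w')^2\psi^2=\phi^2$ the right-hand side is at most $2A_R^{1/2}\bigl(\int_{\R^2}\phi^2|\nabla\chi_R|^2\bigr)^{1/2}$, hence
\[
\int_{B_R}(w')^2|\nabla\psi|^2 \;\le\; A_R \;\le\; 4\int_{\R^2}\phi^2|\nabla\chi_R|^2 \;\le\; 4\,\|\phi\|_{L^\infty(\R^2)}^2\int_{\R^2}|\nabla\chi_R|^2 \;\le\; \frac{C}{\ln R}.
\]
Letting $R\to\infty$ forces $\int_{\R^2}(w')^2|\nabla\psi|^2=0$, and since $w'>0$ this gives $\nabla\psi\equiv0$; thus $\psi$ is a constant $C\in\R$ and $\phi(s,t)=C\,w'(t)$, which is the claim.

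\textbf{Expected main difficulty.} The one genuinely delicate point is the two-dimensionality of the ambient space: with a naive cut-off ($\chi=1$ on $B_R$, $\chi=0$ off $B_{2R}$, $|\nabla\chi|\le C/R$) the bound above only yields $(w')^2|\nabla\psi|^2\in L^1(\R^2)$ rather than its vanishing, so one must use the logarithmic cut-off — or, equivalently, make a second pass exploiting that the tail $\int_{B_{2R}\setminus B_R}(w')^2|\nabla\psi|^2$ of the now convergent integral tends to $0$. The remaining checks are routine: $w''/w'$ is bounded (for the twin-pit nonlinearity it equals $-\sqrt2\,w$, so $|w''/w'|\le\sqrt2$), which makes the reduction in Step 1 and the finiteness of $A_R$ unproblematic, and these follow at once from the explicit form of $w$ and the smoothness and positivity of $w'$.
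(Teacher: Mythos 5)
Your argument is correct, and it is essentially the standard proof that the paper is implicitly invoking when it refers to \cite{9} "and references therein" (the $\psi=\phi/w'$ reduction to the degenerate divergence-form equation $\operatorname{div}\bigl((w')^2\nabla\psi\bigr)=0$, followed by a Caccioppoli/Liouville estimate with a logarithmic cutoff, is precisely the mechanism used there; it is the two-dimensional, linear analogue of the Ambrosio--Cabr\'e stability computation). All the steps check out: $L(w')=0$ since $w'''+f'(w)w'=0$; $\psi=\phi/w'$ is smooth because $w'>0$; $(w')^2\psi^2=\phi^2$ is uniformly bounded; testing against $\chi_R^2\psi$, Cauchy--Schwarz, and $\int|\nabla\chi_R|^2\leq C/\log R$ give $\int_{B_R}(w')^2|\nabla\psi|^2\leq C/\log R\to 0$, forcing $\nabla\psi\equiv 0$. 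Your remark about the necessity of the logarithmic cutoff in dimension two (the naive linear cutoff only gives $(w')^2|\nabla\psi|^2\in L^1$, requiring a second tail-estimate pass) is the genuinely subtle point and you have identified and handled it correctly.
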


For a detailed proof of this lemma we refer the reader to \cite{9}
and references there in.

\noindent Next, let us consider the linear projected problem
\begin{equation}\label{LinearProjectedProblem}
\left\{
\begin{aligned}
&\partial_{tt}\phi+\partial_{ss}\phi+f'(w)\phi=g(s,t)+c(s)w'(t) \;\text{ in }\; \R\times\R\\[0.2cm]
&\int_{\R}\phi(s,t)w'(t)dt=0, \quad\forall s\in\R
\end{aligned}\right.
\end{equation}
\noindent Assuming that the corresponding operations can be carried
out, for every fixed $s$, we can multiply the equation by $w'(t)$
and integrate by parts, to find that

\begin{align}
c(s)=-\dfrac{\int_{\R}g(s,t)w'(t)dt}{\int_{\R}|w'(t)|^2dt}\label{ProjectionC}
\end{align}
Hence, if $\phi$ solves problem \eqref{LinearProjectedProblem}, then
$\phi$ eliminates the part of $g$ which does not contribute to the
projection onto $w'(t)$. This means, that $\phi$ solves the same
equation, but with $g$ replaced by $\tilde{g}$, where

\begin{equation*}
\tilde{g}(s,t)=g(s,t)-\dfrac{\int_{\R}g(s,\tau) w'(\tau)d\tau}{\int_{\R}|w'(\tau)|^2d\tau}w'(t)
\end{equation*}

\medskip
Observe that the term $c(s)$ in problem \eqref{ProjectedProblem} has
a similar role, except that we cannot find it so explicitly, since
this time the PDE in $\phi$ is nonlinear and nonlocal.

\medskip
Now, we show that the linear problem \eqref{LinearProjectedProblem}
has a unique solution $\phi$, which respects the size of $g$ in norm
\eqref{DefNorm10}, up to its second derivatives. We collect the
discussion in the following proposition, whose proof is basically
that contained in \cite{48}, \cite{9}.

\begin{prop}{\label{PropLinearTheory}
Given $\mu\geq 0$ and $0<\sigma<\sqrt{2}$, there is a constant $C>0$
such that for all sufficiently small $\ep>0$ the following holds.
For any $g$ with $\|g\|_{C^{0,\lambda}_{\mu,\sigma}(\R^2)}<\infty$,
the problem \eqref{LinearProjectedProblem} with $c(s)$ defined in
\eqref{ProjectionC}, has a unique solution $\phi$ with
$\|\phi\|_{C^{2,\lambda}_{\mu,\sigma}(\R^2)}<\infty$. Furthermore,
this solution satisfies the estimate
\begin{equation*}
\|\phi\|_{C^{2,\lambda}_{\mu,\sigma}(\R^2)}\leq
C\|g\|_{C^{0,\lambda}_{\mu,\sigma}(\R^2)}
\end{equation*}
}
\end{prop}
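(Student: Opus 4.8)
The plan is to prove first the a priori estimate $\|\phi\|_{C^{2,\lambda}_{\mu,\sigma}(\R^2)}\le C\|g\|_{C^{0,\lambda}_{\mu,\sigma}(\R^2)}$, valid for every solution $(\phi,c)$ of \eqref{LinearProjectedProblem} with $c(s)$ given by \eqref{ProjectionC}, and then to deduce existence by solving truncated problems on expanding rectangles and letting their size tend to infinity. Uniqueness is then immediate: two solutions sharing the same data $g$ have, by \eqref{ProjectionC}, the same multiplier $c(s)$, so their difference solves the homogeneous projected problem and hence vanishes by the a priori estimate applied with $g=0$.

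\textbf{The a priori estimate.} It suffices to establish the weighted $L^\infty$ bound $\|\phi\|_{L^\infty_{\mu,\sigma}(\R^2)}\le C\|g\|_{C^{0,\lambda}_{\mu,\sigma}(\R^2)}$; the estimates for $D\phi$ and $D^2\phi$ completing the norm \eqref{DefNorm13} then follow from interior Schauder estimates applied on unit balls to $\partial_{tt}\phi+\partial_{ss}\phi=g+c(s)w'(t)-f'(w)\phi$, after multiplying by the weight $(1+|\ep s|)^\mu e^{\sigma|t|}$ and using that $|c(s)|\le C\sup_t(1+|\ep s|)^\mu e^{\sigma|t|}|g(s,t)|$ by \eqref{ProjectionC}. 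To prove the $L^\infty$ bound I would argue by contradiction and compactness. Suppose there are $\ep_n\to0$, data $g_n$ and solutions $\phi_n$ of \eqref{LinearProjectedProblem} with $\|\phi_n\|_{L^\infty_{\mu,\sigma}(\R^2)}=1$ but $\|g_n\|_{C^{0,\lambda}_{\mu,\sigma}(\R^2)}\to0$. Choose $(s_n,t_n)$ with $(1+|\ep_n s_n|)^\mu e^{\sigma|t_n|}|\phi_n(s_n,t_n)|\ge\tfrac12$, translate by $(s_n,0)$ and renormalise by $(1+|\ep_n s_n|)^\mu$. Since $\ep_n\to0$, the ratio of $s$-weights tends to $1$ locally uniformly, and interior elliptic estimates give, along a subsequence, a limit $\phi_\infty$ on $\R^2$ with $|\phi_\infty(s,t)|\le Ce^{-\sigma|t|}$, solving the homogeneous linearised equation $\partial_{tt}\phi_\infty+\partial_{ss}\phi_\infty+f'(w)\phi_\infty=0$ (the contributions of $g_n$ and of $c_n(s)w'(t)$ vanish in the limit) and inheriting the orthogonality $\int_\R\phi_\infty(s,t)w'(t)\,dt=0$. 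By Lemma~\ref{LemmaBoundedKernelLinearizedOp}, $\phi_\infty(s,t)=C\,w'(t)$, and orthogonality forces $\phi_\infty\equiv0$; if $t_n$ stays bounded this contradicts $|\phi_\infty|\ge\tfrac12 e^{-\sigma|t_n|}>0$ at the limit point. If $|t_n|\to\infty$, one translates also in $t$: the coefficient $f'(w(t+t_n))$ converges to $f'(\pm1)=-\sigma_\pm^2$ (here $\sigma_\pm=\sqrt2$), so $\phi_\infty$ solves $\partial_{tt}\phi_\infty+\partial_{ss}\phi_\infty-\sigma_\pm^2\phi_\infty=0$ with $|\phi_\infty(s,t)|\le Ce^{\sigma|t|}$ and $\sigma<\sqrt2$; a Phragm\'en--Lindel\"of argument (comparison with the supersolutions $\eta[\cosh(bt)+\cosh(bs)]$, $\sigma<b<\sqrt2$, on large squares, followed by $\eta\to0$) forces $\phi_\infty\equiv0$, again a contradiction.

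\textbf{Existence.} With the a priori bound at hand, I would fix $R>0$ and solve on $\Omega_R=(-R,R)^2$ the projected Dirichlet problem with the constraint $\int_{-R}^{R}\phi(s,t)w'(t)\,dt=0$ for all $s$ and multiplier $c_R(s)w'(t)$. This is solvable by the Fredholm alternative, since $\partial_{tt}+f'(w)$ is bounded above by $-\lambda_1<0$ on the orthogonal complement of $w'$ in $L^2(\R_t)$ (spectral gap of the linearised ODE operator), hence $\partial_{ss}+\partial_{tt}+f'(w)$ restricted to the corresponding codimension-one subspace has trivial kernel on $\Omega_R$, uniformly in $R$. The a priori argument above --- whose proof is local and therefore applies uniformly in $R$, once one adds the obvious barriers controlling $\phi_R$ near $\partial\Omega_R$ --- yields $\|\phi_R\|_{C^{2,\lambda}_{\mu,\sigma}(\Omega_R)}\le C\|g\|_{C^{0,\lambda}_{\mu,\sigma}(\R^2)}$ with $C$ independent of $R$, and an Arzel\`a--Ascoli/diagonal extraction as $R\to\infty$ produces the desired solution on $\R^2$ obeying the stated estimate.

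\textbf{The main obstacle.} The delicate part is the a priori estimate, and within it the concentration-at-infinity analysis. When the near-maximising points run off in $s$, one must check that the scaling $\ep_n\to0$ genuinely flattens the $s$-weight, so that the limiting equation is the full planar linearised Allen--Cahn operator and both Lemma~\ref{LemmaBoundedKernelLinearizedOp} and the orthogonality constraint are available. When they run off in $t$, the strict inequality $\sigma<\sqrt2=\sigma_\pm$ is exactly what the Liouville step requires in order to discard the limit despite the admissible exponential growth $e^{\sigma|t|}$. The remaining steps are routine; indeed the argument is in essence that of \cite{48} and \cite{9}.
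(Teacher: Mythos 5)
Your proposal is correct and follows essentially the same blow-up/Liouville scheme that the paper itself invokes: the paper gives no independent proof of Proposition~\ref{PropLinearTheory}, stating only that "its proof is basically that contained in \cite{48}, \cite{9}," and those references proceed exactly as you do --- contradiction plus renormalised translation, Lemma~\ref{LemmaBoundedKernelLinearizedOp} together with the orthogonality constraint to kill the limit when the maximising points stay at bounded $t$, and a Phragm\'en--Lindel\"of comparison against $\eta[\cosh(bt)+\cosh(bs)]$ with $\sigma<b<\sqrt2$ when $|t_n|\to\infty$, followed by approximation on expanding domains for existence. Two small remarks: in the existence step the truncated constraint $\int_{-R}^{R}\phi\,w'\,dt=0$ only approximately matches the orthogonality on $\R$, so the spectral-gap lower bound on the complement of $w'$ is uniform in $R$ only up to exponentially small errors, which should be said explicitly; and the contradiction set-up should make clear that it is $\ep_n\to0$ (not merely $\ep_n$ small) which forces the $s$-weight ratio $(1+|\ep_n s_n|)^\mu/(1+|\ep_n(s+s_n)|)^\mu$ to tend to $1$ locally uniformly, so that the limit equation carries no residual first-order drift and is precisely $\partial_{tt}+\partial_{ss}+f'(w)$, to which Lemma~\ref{LemmaBoundedKernelLinearizedOp} applies.
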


\medskip
Now, we are in a position to proof proposition
\ref{PropNonlinearTheory}. Recall from section 4, that
proposition~\ref{PropNonlinearTheory} refers to the solvability of
the projected problem
\begin{equation}
\left\{
\begin{aligned}
&\partial_{tt}\phi+\partial_{ss}\phi+f'(w)\phi=-\tilde{S}(u_1)-\N(\phi)+c(s)w'(t)\quad \text{ in }\quad\R\times\R\\[0.2cm]
&\int_{\R}\phi(s,t)w'(t)dt=0, \quad \text{ for all }s\in \R.
\end{aligned}\label{NonlinearProjectedProblem2}
\right.
\end{equation}
where we recall that $\N(\phi)$ is made out of the operators $H(t)$ and $\B$, given in \eqref{DefN1}-\eqref{ProofThmEq9}, like 
\begin{multline}
\N(\phi):=\underbrace{\B(\phi)+[f'(u_1)-f'(w)]\phi +
\ep\nabla_{\bar{x}} a/a\cdot \nabla_x\phi}_{\N_1(\phi)}+\\[-0.3cm]
\underbrace{\zeta_4[f'(u_1)-f'(H(t))]\Psi(\phi)}_{\N_2(\phi)}+
\underbrace{\zeta_4N_1(\Psi(\phi)+\phi)}_{\N_3(\phi)}\label{OperatorN2}
\end{multline}

Let us define $\phi:=T(g)$ as the operator providing the solution
predicted in proposition~\ref{PropLinearTheory}. Then
\eqref{NonlinearProjectedProblem2} can be recast as the fixed point
problem
\begin{align}
\phi=T(-\tilde{S}(u_1)-\ep^2\J_{a,\Gamma}[h]\,w'(t)-\N(\phi))=:\T(\phi),\quad
\| \phi \|_{C^{2,\lambda}_{\mu,\sigma}(\R^2)}\leq K\ep^4
\label{ProofPropNPPEq1}
\end{align}

\begin{claim}{\label{LipschitzCharacterN}
Given $\alpha>0$, $0<\mu<2+\alpha$ and $0<\sigma<\sqrt{2}$, there is
some constant $C>0$, possibly depending on the constant $\K$ of
\eqref{DefNorm4} but independent of $\ep$, such that for $M>0$ and
$\phi_1,\phi_2$ satisfying
\begin{align*}
\|\phi_i\|_{C^{2,\lambda}_{\mu,\sigma}(\R^2)}\leq M\ep^4,\quad
i=1,2.
\end{align*}
then the nonlinearity $\N$ behaves locally Lipschitz, as
\begin{align}
\|\N(\phi_1)-\N(\phi_2)\|_{C^{0,\lambda}_{\mu,\sigma}(\R^2)}\leq
C\ep\ \|\phi_1-\phi_2\|_{C^{2,\lambda}_{\mu,\sigma}(\R^2)}
\label{ClaimLipsNEq1}
\end{align}
where the operator $\N$ is given in \eqref{OperatorN2}. }
\end{claim}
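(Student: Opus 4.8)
The plan is to estimate each summand in the decomposition $\N=\N_1+\N_2+\N_3$ of \eqref{OperatorN2} separately in the weighted Hölder norm $\|\cdot\|_{C^{0,\lambda}_{\mu,\sigma}(\R^2)}$ of \eqref{DefNorm10}. The point of view is that $\N_2$ and $\N_3$, being built from the nonlocal operator $\Psi$ and the quadratic nonlinearity $N_1$, contribute a Lipschitz constant which is in fact $o(\ep)$, so that the genuine $\mathit{O}(\ep)$ in \eqref{ClaimLipsNEq1} comes entirely from the (linear) operator $\N_1$. Throughout I will freely use that $\|\phi_i\|_{C^{2,\lambda}_{\mu,\sigma}(\R^2)}\le M\ep^4<1$ for $\ep$ small, so that $\Psi(\phi_i)$ is defined by Proposition \ref{LemmaNonlinearPsiEq}, and that all constants may depend on $\K$ from \eqref{DefNorm4} but not on $\ep$.

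For $\N_2(\phi)=\zeta_4[f'(u_1)-f'(H(t))]\Psi(\phi)$: Proposition \ref{LemmaNonlinearPsiEq} gives $\|\Psi(\phi_1)-\Psi(\phi_2)\|_X\le Ce^{-\sigma\delta/2\ep}\|\phi_1-\phi_2\|_{C^{2,\lambda}_{\mu,\sigma}(\R^2)}$. On $\operatorname{supp}\zeta_4$ one has $\zeta_2\equiv 1$, so the weight $K$ of \eqref{WeightFunction} reduces there to $e^{\sigma|t|/2}(1+|\ep s|)^{\mu}$; pairing this with the exponential decay $|f'(u_1)-f'(H(t))|\le Ce^{-\sigma|t|}$ (which uses $u_1=w+\varphi_1$ with $\varphi_1=\mathit{O}(\ep^2(1+|\ep s|)^{-2-\alpha}e^{-\sigma|t|})$, the exponential approach of $w$ to $\pm1$, and $\sigma<\sqrt2$) absorbs the mismatch between $K$ and the $(1+|\ep s|)^{\mu}e^{\sigma|t|}$ weight and yields $\|\N_2(\phi_1)-\N_2(\phi_2)\|_{C^{0,\lambda}_{\mu,\sigma}(\R^2)}\le Ce^{-\sigma\delta/2\ep}\|\phi_1-\phi_2\|_{C^{2,\lambda}_{\mu,\sigma}(\R^2)}$. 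For $\N_3(\phi)=\zeta_4 N_1(\Psi(\phi)+\phi)$, the mean value theorem together with $DN_1(v)=F''(\w)-F''(\w+v)=\mathit{O}(v)$ (smoothness of $F$) gives $|N_1(v_1)-N_1(v_2)|\le C(|v_1|+|v_2|)\,|v_1-v_2|$ pointwise; with $v_i=\Psi(\phi_i)+\phi_i$ and the a priori bounds $\|\Psi(\phi_i)\|_X=\mathit{O}(e^{-\sigma\delta/2\ep})$, $\|\phi_i\|_{C^{2,\lambda}_{\mu,\sigma}(\R^2)}\le M\ep^4$, the prefactor $(|v_1|+|v_2|)$ has size $\mathit{O}(\ep^4)$, and matching weights as above gives $\|\N_3(\phi_1)-\N_3(\phi_2)\|_{C^{0,\lambda}_{\mu,\sigma}(\R^2)}\le C\ep^4\|\phi_1-\phi_2\|_{C^{2,\lambda}_{\mu,\sigma}(\R^2)}$. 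The Hölder-seminorm versions of both estimates follow from interior Schauder estimates on the unit balls $B_1(s,t)$, using the bounded geometry.

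For the linear operator $\N_1(\phi)=\B(\phi)+[f'(u_1)-f'(w)]\phi+\ep\frac{\nabla_{\bar x}a}{a}\cdot\nabla_x\phi$ it suffices to bound the operator norm. The middle term is harmless since $|f'(u_1)-f'(w)|=\mathit{O}(\varphi_1)=\mathit{O}(\ep^2)$, giving an $\mathit{O}(\ep^2)$ contribution; the gradient term carries an explicit factor $\ep$, has coefficient bounded (indeed decaying, by \eqref{smoothnessfora} and \eqref{IntrodEq14}) and acts only on first derivatives of $\phi$, hence contributes $\mathit{O}(\ep)$. For $\B(\phi)=\zeta_0[\Delta_x-\partial_{tt}-\partial_{ss}]\phi$ I would insert the expansion of Lemma \ref{LemmaEuclideanLaplacian}: every term produced is then either (i) an explicit positive power of $\ep$ times a derivative of $\phi$ whose coefficient is bounded by a negative power of $(1+|\ep s|)$ (e.g.\ $\ep k(\ep s)\partial_t$, $\ep^2(t+h)k^2\partial_t$), or (ii) carries a factor $h$, $h'$ or $h''$, all of size $\mathit{O}(\ep)$ by \eqref{DefNorm4}, or (iii) one of the second-order terms inside $D_{\ep,h}$ of the shape $\ep(t+h)A_0(\ep s,\ep(t+h))\partial_{ss}$ and its analogues. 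For class (iii) one uses that $\B(\phi)$ is supported in $\{|t+h(\ep s)|<\rho_\ep(s)-2\}$, so $\ep|t+h|\lesssim \delta+c_0|\ep s|$ is bounded there, while $A_0=\mathit{O}(k(\ep s))=\mathit{O}((1+|\ep s|)^{-1-\alpha})$ by \eqref{ErrorA0LaplacianXeph} and \eqref{IntrodEq15}; since the weight $e^{\sigma|t|}$ in the $C^{0,\lambda}_{\mu,\sigma}$ norm is exactly compensated by the $e^{-\sigma|t|}$ decay of $\partial_{ss}\phi$ built into $\|\phi\|_{C^{2,\lambda}_{\mu,\sigma}(\R^2)}$, the surviving factor is controlled by the curvature decay. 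Summing the contributions of $\N_1,\N_2,\N_3$ yields \eqref{ClaimLipsNEq1}.

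The main obstacle is precisely this last, class-(iii), weighted Hölder bookkeeping for the second-order part of $\B$: one must simultaneously exploit the support restriction coming from $\zeta_0$ to tame the linearly growing prefactors $(t+h)$, the decay bounds \eqref{IntrodEq15}--\eqref{IntrodEq14} on the curvature and the potential to extract the smallness, and interior Schauder estimates to upgrade pointwise to Hölder control, all with constants independent of $\ep$. Everything else reduces to the mean value theorem combined with Proposition \ref{LemmaNonlinearPsiEq} and the ball constraints on $\phi_i$ and $h$.
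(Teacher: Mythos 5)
Your proposal follows the same strategy as the paper: split $\N$ into its three summands $\N_1,\N_2,\N_3$ from \eqref{OperatorN2}, show that the nonlocal part $\N_2$ has an exponentially small Lipschitz constant via Proposition \ref{LemmaNonlinearPsiEq} and the decay of $f'(u_1)-f'(H)$, that the quadratic part $\N_3$ is $\mathit{O}(\ep^4)$-Lipschitz by the mean value theorem and the a priori bounds, and that the genuinely $\mathit{O}(\ep)$ contribution comes from the linear coefficients of $\B$, the $\ep\nabla_{\bar x}a/a\cdot\nabla_x$ term and $f'(u_1)-f'(w)=\mathit{O}(\varphi_1)$, using the curvature/potential decay and the support restriction of $\zeta_0$ to control the unbounded prefactors. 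This matches the paper's proof (up to an internal index slip in the paper where $\N_2,\N_3$ are momentarily mislabeled $\N_3,\N_4$, and an inconsistent $M\ep^3$ vs.\ $M\ep^4$ which you correctly resolve per the claim's statement).
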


To prove this claim, we analyze each of its components $\N_i$ from
\eqref{OperatorN2}. Let us start with $\N_1$. Note that its first
term corresponds to a second order linear operator with coefficients
of order $\ep$ plus a decay of order at least $\mathit{O}((1+|\ep
s|)^{-1-\alpha})$. In particular, recall from \eqref{ProofThmEq9}
that $\B=\zeta_0\tilde{\B}_0$, where in coordinates
$[\Delta_{x}-\partial_{tt}-\partial_{ss}]$ amounts
\begin{align*}
\tilde{B}_0=&-2\ep h\partial_{st}-\ep[k(\ep s)+\ep(t+h)k^2(\ep s)]
\partial_t+\ep (t+h)A_0(\ep s,\ep(t+h))\\[0.1cm]
&\cdot[\partial_{ss}-2h'\partial_t+\ep^2|h'|^2\partial_{tt}]+\ep^2(t+h)B_0(\ep
s,\ep(t+h))[\partial_s-\ep h'\partial_t]\\[0.1cm]
&-\ep^2h''\partial_t +\ep^2|h'|^2\partial_{tt}\\[0.1cm]
&+\ep^3(t+h)^2C_0(\ep s, \ep(t+h))\partial_t
\end{align*}

Analyzing each term, leads to
$$
\|\B(\phi)\|_{C^{0,\lambda}_{\mu,\sigma}(\R^2)}\leq
C\ep  \|\phi\|_{C^{2,\lambda}_{\mu,\sigma}(\R^2)}
$$

Thus, $\N_1$ satisfies
\begin{align}
\|\N_1(\phi)\|_{C^{0,\lambda}_{\mu,\sigma}(\R^2)}\leq C\ep
\|\phi\|_{C^{2,\lambda}_{\mu,\sigma}(\R^2)}.\label{LipschitzCharacterNEq1}
\end{align}
\noindent On the other hand, consider functions $\phi_i$, with
$$ \|\phi_i\|_{C^{2,\lambda}_{\mu,\sigma}(\R^2)}\leq M\ep^3,\quad i=1,2$$

\medskip
Now, let us analyze $\N_2$, by noting that for any $(s,t)\in \R^2$
the definition \eqref{ImproveAproximation} implies that
\begin{align}
\|\N_3(\phi_1)-\N_3(\phi_2)& \|_{C^{0,\lambda}_{\mu,\sigma}(\R^2)}\notag\\
&\leq C\sup_{(s,t)\in\R^2} e^{(\sigma/2-\sqrt{2})|t|}
\sup_{x\in\R^2}K(x)\|\Psi(\phi_1)-\Psi(\phi_2)\|_{C^{0,\lambda}(B_1(x))}\notag\\[0.2cm]
&\leq C\|\Psi(\phi_1)-\Psi(\phi_2)\|_{C^{0,\lambda}_{K}(\R^2)}
=Ce^{-\sigma\delta/2\ep}\|\phi_1-\phi_2\|_{C^{2,\lambda}_{\mu,\sigma}(\R^2)}\label{LipschitzCharacterNEq3}
\end{align}

\noindent In order to analyze $\N_4$, note that the definition
\eqref{DefN1} of $N_1$ also implies
\begin{align*}
&|\N_4(\phi_1)-\N_4(\phi_2)|\leq |\zeta_4N_1(\Psi(\phi_1)+\phi_1)-\zeta_4 N_1(\Psi(\phi_2)+\phi_2)|\\[0.2cm]
&\quad \leq C\zeta_4\sup_{\xi\in(0,1)}|\xi(\Psi(\phi_1)+\phi_1)+(1-
\xi)(\Psi(\phi_2)+\phi_2)|\cdot(|\phi_1-\phi_2|+|\Psi(\psi_1)-\Psi(\psi_2)|)
\end{align*}
taking into account the region of $\R^2$ we are considering, it is
possible to make appear de weight $K(x)$ in~\eqref{WeightFunction}.
Therefore thanks to the hypothesis on $\phi_i$ and Lemma
\ref{LemmaNonlinearPsiEq}, we obtain
\begin{align}
& \|\N_4(\phi_1)-\N_4(\phi_2)\|_{C^{0,\lambda}_{\mu,\sigma}(\R^2)}\notag\\
&\quad \leq
C\sup_{(s,t)\in\R^2}\biggl\{e^{\sigma|t|/2}[\|\phi_1\|_{C^{0,\lambda}(B_1(s,t))}+\|\phi_2\|_{C^{0,\lambda}(B_1(s,t))}
+\|\Psi(\phi_1)\|_{C^{0,\lambda}(B_1(x))}+\|\Psi(\phi_2)\|_{C^{0,\lambda}(B_1(x))}] \biggr.\notag\\[0.1cm]
&\qquad \qquad \cdot \biggl. e^{\sigma|t|/2}(1+|\ep s|)^{\mu}
[\|\phi_1-\phi_2\|_{C^{0,\lambda}(B_1(s,t))}+\|\Psi(\phi_1)-\Psi(\phi_2)\|_{C^{0,\lambda}(B_1(x))}]\biggr\}\notag\\[0.1cm]
&\leq
C\sup_{(s,t)\in\R^2}\biggl\{\left[\|\phi_1\|_{C^{2,\lambda}_{\mu,\sigma}(\R^2)}
+\|\phi_2\|_{C^{2,\lambda}_{\mu,\sigma}(\R^2)}+K(x)
(\|\Psi(\phi_1)\|_{C^{0,\lambda}(B_1(x))}+\|\Psi(\phi_2)\|_{C^{0,\lambda}(B_1(x))})\right]\biggr. \notag\\
& \qquad\qquad \biggl.\cdot
(e^{-\sigma|t|/2}\|\phi_1-\phi_2\|_{C^{0,\lambda}_{\mu,\sigma}(\R^2)}+K(x)\|\Psi(\phi_1)-\Psi(\phi_2)\|_{C^{0,\lambda}(B_1(x))})
\biggr\}
\notag\\[0.1cm]
&\leq
C(\|\phi_1\|_{C^{2,\lambda}_{\mu,\sigma}(\R^2)}+\|\phi_2\|_{C^{2,\lambda}_{\mu,\sigma}(\R^2)}
+\|\Psi(\phi_1)\|_{X}+\|\Psi(\phi_2)\|_{X})[\|\phi_1-\phi_2\|_{C^{0,\lambda}_{\mu,\sigma}}+\|\Psi(\phi_1)-\Psi(\phi_1)\|_{X}]\notag\\[0.1cm]
&\leq
2C(\ep^3+e^{-\sigma\delta/2\ep})\left\{\|\phi_1-\phi_2\|_{C^{2,\lambda}_{\mu,\sigma}(\R^2)}+
e^{-\sigma\delta/2\ep}\|\phi_1-\phi_2\|_{C^{2,\lambda}_{\mu,\sigma}(\R^2)}\right\}
\label{LipschitzCharacterNEq4}
\end{align}
To reach a conclusion,  we note from
\eqref{LipschitzCharacterNEq1}-\eqref{LipschitzCharacterNEq3} and
\eqref{LipschitzCharacterNEq4} that choosing $\ep>0$ small enough we
obtain the validity of inequality \eqref{ClaimLipsNEq1}. The proof
of Claim~\ref{LipschitzCharacterN} is concluded.\hfill$\square$

\medskip
In order to conclude the proof of proposition\ref{PropNonlinearTheory},we make the observation that
the formula \eqref{Su1tilde} and estimate \eqref{R1Size} ensure
that, for any $0<\mu\leq 2+\alpha$, $\sigma\in(0,\sqrt{2})$ and
$\lambda\in(0,1)$ it holds
\begin{multline}
\left\|\tilde{S}(u_1)+\ep^2\J_{a}[h](\ep s)\cdot
w'(t)-\ep^3\left[k^3(\ep
s)+\frac{1}{2}\partial_{tt}\left(\frac{\partial_{t}a}{a}\right) (\ep
s,0)\right]\hat{c}w'(t)\,
\right\|_{C^{0,\lambda}_{\mu,\sigma}(\R^2)} \leq C\ep^4
\label{ConclusionProp2Eq1}
\end{multline}

Let us assume now that $\phi_1,\phi_2\in B_{\ep}$, where
$$ B_{\ep}:=\{\phi\in C^{2,\lambda}_{loc}(\R^2)\ /\
\|\phi\|_{C^{2,\lambda}_{\mu,\sigma}(\R^2)}\leq K \ep^4 \}$$ for a
constant $K$ to be chosen. Note that using
Claim~\ref{LipschitzCharacterN}, we are able to estimate the size of
$\N(\phi)$ for any $\ep>0$ sufficiently small, as follows
\begin{align}
\|\N(\phi)\|_{C^{0,\lambda}_{\mu,\sigma}(\R^2)}&\leq
C\|\N(0)\|_{C^{0,\lambda}_{\mu,\sigma}(\R^2)}
+C\ep \|\phi\|_{C^{2,\lambda}_{\mu,\sigma}(\R^2)}\notag\\[0.1cm]
&=C\|\zeta_4[f'(u_1)-f'(H)]\Psi(0)+\zeta_4
N_1(\Psi(0))\|_{C^{0,\lambda}_{\mu,\sigma}(\R^2)}
+C\ep \|\phi\|_{C^{2,\lambda}_{\mu,\sigma}(\R^2)}\notag\\[0.1cm]
&\leq C\sup_{t\in\R}e^{-\sigma\delta/2\ep}\cdot \|\Psi(0)\|_{X}+\|\Psi(0)\|^2_{X}+C\ep\cdot K\ep^4\notag\\[0.1cm]
&\leq \tilde{C}\ep^5\quad\forall\,\phi\in
B_{\ep}\label{ConclusionProp2Eq2}
\end{align}
for some constant $\tilde{C}$, independent of $K$.

Then from the estimates
\eqref{ConclusionProp2Eq1}-\eqref{ConclusionProp2Eq2} follows that
the right hand side of the projected problem
\eqref{NonlinearProjectedProblem2} defines an operator $\T$ applying
the ball $B_{\ep}$ into itself, provided $K$ is fixed sufficiently
large and independent of $\ep>0$. Indeed using the definition of
$\T$ from \eqref{ProofPropNPPEq1}, and
Proposition~\ref{PropLinearTheory}, we can easily find an estimate
for the size of $\phi$, through
\begin{align*} \|\T(\phi)\|_{C^{2,\lambda}_{\mu,\sigma}(\R^2)} &=\|T(-\tilde{S}(u_1)-\ep^2\J_{a}[h]w'-\N(\phi))\|_{C^{2,\lambda}_{\mu,\sigma}(\R^2)}\\[0.2cm]
&\leq\|T\|
(\|\tilde{S}(u_1)+\ep^2\J_{a}[h]w'\|_{C^{0,\lambda}_{\mu,\sigma}(\R^2)}+\|\N(\phi)\|_{C^{0,\lambda}_{\mu,\sigma}(\R^2)})\leq
C\ep^4
\end{align*}
Further, $\T$ is also a contraction mapping of $B_{\ep}$ in norm
$C^{2,\lambda}_{\mu,\sigma}$ provided that $\mu\leq 2+\alpha$, since
Claim \eqref{LipschitzCharacterN} asserts that $\N$ has Lipschitz
dependence in $\phi$:
\begin{align*} \|\T(\phi_1)-\T(\phi_2)\|_{C^{2,\lambda}_{\mu,\sigma}(\R^2)}&=\|-T(\N(\phi_1)-\N(\phi_2))\|_{C^{2,\lambda}_{\mu,\sigma}(\R^2)}\\[0.2cm]
& \leq C
\|\N(\phi_1)-\N(\phi_2)\|_{C^{0,\lambda}_{\mu,\sigma}(\R^2)}\leq
C\ep\ \|\phi_1-\phi_2\|_{C^{2,\lambda}_{\mu,\sigma}(\R^2)}
\end{align*}
So by taking $\ep>0$ small, we can use the contraction mapping
principle to deduce the existence of a unique fixed point $\phi$ to
equation~\eqref{ProofPropNPPEq1}, and thus $\phi$ turns out to be
the only solution of problem \eqref{NonlinearProjectedProblem2}.
This justify the existence of $\phi$, as required.

On the other hand, the Lipschitz dependence \eqref{LipschitzPhi} of
$\Phi$ in $h$, follows from the fact that
\begin{multline*}
\|\T(\Phi(h_1)) - \T(\Phi(h_2))\|_{C^{2,\lambda}_{\mu,\sigma}(\R^2)}\leq C
(\|\tilde{S}(u_1,h_1)-\tilde{S}(u_1,h_2)\|_{C^{0,\lambda}_{\mu,\sigma}(\R^2)}
+\|\N(\Phi_1)-\N(\Phi_1)\|_{C^{0,\lambda}_{\mu,\sigma}(\R^2)})
\end{multline*}
A series of lengthy but straightforward computations, leads to
\eqref{LipschitzPhi} and so the proof is complete.

\section{The proof of proposition~\ref{PropNonlinearJacobiTheory}}

In this section, we will finish the proof of Theorem
\ref{TheoremMainResult} by proving
proposition~\ref{PropNonlinearJacobiTheory}. Recall that the reduced
problem \eqref{JacobiEquation} reads as
\begin{align}
\J_{a}[h](\ep s):=h''(\ep s)+\frac{\partial_{\bs{s}}a(\ep
s,0)}{a(\ep s,0)}h'(\ep s)-Q(\ep s)h(\ep s)\,=\, \G(h)(\ep s) \quad
\hbox{in }\R \label{ReducedProblemEq1}
\end{align}
where $Q(\bs{s})$ was defined in \eqref{OperatorQ}, and the operator
$\G=G_1+G_2$ was given in
\eqref{ProofThmEq14}-\eqref{ProofThmEq15}.

We will make use of the following technical lemma, whose proof is
left to the reader.

\begin{lemma}{\label{LemmaSizeProjection}
Let $\Theta=\Theta(s,t)$ be a function defined in $\R\times\R$, such
that, for any $\lambda\in(0,1)$, $\mu\in(1,2+\alpha]$ and
$\sigma\in(0,\sqrt{2})$
\begin{align*}
\|\Theta\|_{C^{0,\lambda}_{\mu,\sigma}(\R^2)}:=\sup_{(s,t)\in\R\times\R}e^{\sigma|t|}(1+|\ep
s|)^{\mu}\|\Theta\|_{C^{0,\lambda}(B_1(s,t))}<+\infty
\end{align*}
Then the function defined in $\R$ as
\begin{align*}
Z(\ep s):= \int_{\R} \Theta(s,t)w'(t)dt
\end{align*}
satisfies for some constant $C=C(w,\mu,\sigma)>0$ the following
estimate:
\begin{align}
\|Z\|_{C^{0,\lambda}_{\mu,*}(\R)}\leq
C\ep^{-1}\|\Theta\|_{C^{0,\lambda}_{\mu,\sigma}(\R^2)}\label{LemmaSizeProjectionEq1}
\end{align}
}\end{lemma}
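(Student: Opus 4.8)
The plan is to establish Lemma~\ref{LemmaSizeProjection} by a direct estimate: for each fixed $s$, bound $|Z(\ep s)|$ by pulling the weight $e^{\sigma|t|}(1+|\ep s|)^{\mu}$ out of the integral and exploiting the exponential decay of $w'(t)$. First I would note that, by definition of the weighted norm, for every $(s,t)\in\R\times\R$ one has the pointwise bound
\begin{align*}
|\Theta(s,t)|\leq (1+|\ep s|)^{-\mu}e^{-\sigma|t|}\|\Theta\|_{C^{0,\lambda}_{\mu,\sigma}(\R^2)}.
\end{align*}
Since $w'(t)$ decays like $e^{-\sqrt{2}|t|}$ at infinity (being the derivative of the heteroclinic, $|w'(t)|\leq Ce^{-\sqrt{2}|t|}$), and $\sigma<\sqrt{2}$, the product $|\Theta(s,t)w'(t)|$ is dominated by $C(1+|\ep s|)^{-\mu}e^{-\sigma|t|}e^{-\sqrt{2}|t|}\|\Theta\|_{C^{0,\lambda}_{\mu,\sigma}(\R^2)}$, which is integrable in $t$ with a finite constant $\int_{\R}e^{-(\sigma+\sqrt{2})|t|}dt$. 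This already gives the $L^\infty$ part of the estimate for $Z$, but with a gain rather than a loss in $\ep$, so the factor $\ep^{-1}$ in \eqref{LemmaSizeProjectionEq1} must come from the H\"older seminorm in the $s$-variable.

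Next I would address the H\"older norm $\|Z\|_{C^{0,\lambda}_{\mu,*}(\R)}$, which requires controlling $(1+|s|)^{2+\alpha}\|Z\|_{C^{0,\lambda}(s-1,s+1)}$ — note the norm $C^{0,\lambda}_{\mu,*}$ here uses the weight $(1+|s|)^\mu$ and the argument $s$ (un-rescaled), while $Z$ is evaluated at $\ep s$. For $s_1,s_2$ with $|s_1-s_2|\leq 1$, write
\begin{align*}
Z(\ep s_1)-Z(\ep s_2)=\int_{\R}\bigl(\Theta(s_1,t)-\Theta(s_2,t)\bigr)w'(t)\,dt,
\end{align*}
and estimate the H\"older quotient $|\Theta(s_1,t)-\Theta(s_2,t)|/|s_1-s_2|^\lambda$: since $\Theta$ is controlled in $C^{0,\lambda}$ on unit balls in the $(s,t)$ variables, one has $\|\Theta(\cdot,t)\|_{C^{0,\lambda}}$ bounded by $(1+|\ep s|)^{-\mu}e^{-\sigma|t|}\|\Theta\|_{C^{0,\lambda}_{\mu,\sigma}(\R^2)}$ in the appropriate local sense, and the chain rule for the rescaling $s\mapsto\ep s$ produces exactly one factor of $\ep$ when passing between the $C^{0,\lambda}$ seminorm in the rescaled variable and that in $s$. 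More precisely, $\|Z(\ep\,\cdot)\|_{C^{0,\lambda}(s-1,s+1)}$ picks up $\ep^\lambda\leq 1$ from the H\"older part and the weight $(1+|\ep s|)^{-\mu}$ must be converted to $(1+|s|)^{-\mu}$ at the cost of $\ep^{-\mu}$; since $\mu>1$, it is the handling of this weight conversion — effectively that $(1+|s|)^\mu\leq \ep^{-\mu}(1+|\ep s|)^\mu$ is too lossy and one must instead argue that on the support of interest $Z$ only needs the single power $\ep^{-1}$ — that constitutes the core of the bookkeeping. I would organize this by splitting into the region $|\ep s|\leq 1$ (where $(1+|s|)^\mu\leq C\ep^{-\mu}$ but $Z$ is also bounded, giving room) and $|\ep s|\geq 1$ (where $(1+|s|)^\mu\sim \ep^{-\mu}(1+|\ep s|)^\mu$ genuinely), and checking that a single $\ep^{-1}$ suffices because $\mu\leq 2+\alpha$ and the decay of $w'$ provides the remaining slack.

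The main obstacle I anticipate is precisely this reconciliation of the two different weighted norms — the target norm $\|\cdot\|_{C^{0,\lambda}_{\mu,*}(\R)}$ in \eqref{DefNorm3}, which (despite the subscript $\mu$) in the paper's usage carries weight $(1+|s|)^{2+\alpha}$ and uses the physical variable $s$, versus the source norm $\|\cdot\|_{C^{0,\lambda}_{\mu,\sigma}(\R^2)}$ which carries weight $(1+|\ep s|)^\mu$ in the rescaled variable. Getting the exponent of $\ep$ to be exactly $-1$ (and not $-\mu$ or worse) requires carefully tracking that the projection integral over $t$ produces a function of $\ep s$, so that each $s$-derivative or H\"older difference costs one power of $\ep$, and that the weight mismatch is absorbed by the constraint $\mu\leq 2+\alpha$ together with the integrability of $w'$. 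Once the weight-conversion bookkeeping is set up cleanly, the remaining estimates are routine applications of the pointwise bound on $\Theta$ and dominated convergence / Fubini to justify differentiating under the integral sign.
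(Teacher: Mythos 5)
Your $L^\infty$ part is fine, but your treatment of the Hölder seminorm rests on a misreading of the target norm that leads you astray. The norm $\|Z\|_{C^{0,\lambda}_{\mu,*}(\R)}$ is taken of $Z$ as a function of its own argument $\bs{s}=\ep s$, i.e.\ in the \emph{slow} variable: per the template in the definition of $\|\cdot\|_{C^{0,\lambda}_{2+\alpha,*}(\R)}$ it reads $\sup_{\bs{s}}(1+|\bs{s}|)^{\mu}\|Z\|_{C^{0,\lambda}(\bs{s}-1,\bs{s}+1)}$, with the weight and the unit interval both living in $\bs{s}$. Since the source norm carries the weight $(1+|\ep s|)^{\mu}=(1+|\bs{s}|)^{\mu}$, the two weights \emph{match exactly} and there is no ``weight conversion'' to perform; the entire paragraph you devote to reconciling $(1+|s|)^{\mu}$ against $(1+|\ep s|)^{\mu}$, and your worry about an $\ep^{-\mu}$ overshoot, is chasing a phantom caused by placing the target norm in the fast variable $s$. (Had that been the intended reading, the $L^\infty$ piece alone would already cost $\ep^{-\mu}$ with $\mu>1$, contradicting the stated bound --- which is a sign that the reading is wrong, not that a clever compensation is needed.)

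Once the norm is read correctly, the factor $\ep^{-1}$ has a clean origin that your sketch never quite pins down. For $\bs{s}_1,\bs{s}_2$ in a unit interval, the corresponding fast arguments satisfy $|s_1-s_2|=\ep^{-1}|\bs{s}_1-\bs{s}_2|$. If $|\bs{s}_1-\bs{s}_2|\leq\ep$, then $|s_1-s_2|\leq 1$ and the local $C^{0,\lambda}$ control on $\Theta$ gives $|Z(\bs{s}_1)-Z(\bs{s}_2)|\leq C(1+|\bs{s}|)^{-\mu}|s_1-s_2|^{\lambda}\|\Theta\|=C\ep^{-\lambda}(1+|\bs{s}|)^{-\mu}|\bs{s}_1-\bs{s}_2|^{\lambda}\|\Theta\|$; if $|\bs{s}_1-\bs{s}_2|>\ep$, the crude $2\|Z\|_{L^\infty}$ bound divided by $|\bs{s}_1-\bs{s}_2|^{\lambda}>\ep^{\lambda}$ gives the same $\ep^{-\lambda}$. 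Since $\ep^{-\lambda}\leq\ep^{-1}$, the claimed estimate follows. Note in particular that the Hölder rescaling \emph{loses} a factor $\ep^{-\lambda}$, not gains $\ep^{\lambda}$ as you wrote; that sign error is another symptom of the same confusion about which variable the norm lives in. Your remark that ``each $s$-derivative or Hölder difference costs one power of $\ep$'' is the right intuition, but without fixing the variable convention the bookkeeping you propose cannot close.
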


Let us apply Lemma~\ref{LemmaSizeProjection} to the function
$\Theta(s,t):=\N(\Phi(h))(s,t)$, to estimate the size of the
operator $G_2$ in \eqref{ProofThmEq15}, where we recall that
$$
G_2(h)(\ep s):=c^{-1}_{*}\ep^{-2}\int_{\R}\N(\Phi(h))(s,t)w'(t)dt
$$

We can estimate the size of the projection of $\N$ using the
previous estimate \eqref{LemmaSizeProjectionEq1}, and the bound
\eqref{ConclusionProp2Eq2} for the size of $\N$:
\begin{align}
\|G_2(h)\|_{C^{0,\lambda}_{\mu,*}(\R)}\leq
C\ep^{-3}\|\N(\Phi(h))\|_{C^{0,\lambda}_{\mu,\sigma}(\R^2)}\leq
C\ep^{2}\label{ReducedProblemEq3}
\end{align}
Likewise, for $\phi_i=\Phi(h_i)$, $i=1,2$  it holds similarly that
$$
\|G_2(h_1)-G_2(h_2)\|_{C^{0,\lambda}_{\mu,*}(\R)}\leq C\ep^{-3}
\|\N(\phi_1)-\N(\phi_2)\|_{C^{0,\lambda}_{\mu,\sigma}(\R^2)}
$$

Nonetheless, using \eqref{ClaimLipsNEq1} and proposition
\ref{PropNonlinearTheory}, it follows that
$$
\|\N(\phi_1)-\N(\phi_1)\|_{C^{0,\lambda}_{\mu,\sigma}(\R^2)} \leq
 C\ep^4\|h_1-h_2\|_{C^{2,\lambda}_{\mu,*}(\R)}.
$$

The previous estimates allow us to deduce
\begin{align*}
\|G_2(h_1)-G_2(h_2)\|_{C^{0,\lambda}_{\mu,\sigma}(\R^2)}&\leq C \ep
\|h_1-h_2\|_{C^{2,\lambda}_{\mu,*}(\R)}
\end{align*}
Furthermore, from \eqref{ReducedProblemEq3} we also have that
\begin{align}
\|G_2(0)\|_{C^{0,\lambda}_{\mu,*}(\R)}\leq
C\ep^2\label{ReducedProblemEq4}
\end{align}
for some $C>0$ possibly depending on $\K$.
\medskip

Next, we consider
\begin{align*}
c_{*}G_1(h_1)&= -\ep\left[k^3(\bs{
s})+\frac{1}{2}\partial_{tt}\left(\frac{\partial_{t}a}{a}\right)
(\bs{
s},0)\right]c^*\hat{c}w'(t)\\
&+\ep h''_1(\bs{s})\int_{\R}\zeta_0(t+h_1)A_0(\bs{s},
\ep(t+h_1))w''(t)w'(t)dt\notag\\[0.2cm]
&+\ep^2Q(\bs{s})h''_1(\bs{s})
\int_{\R}\psi'_0(t)w'(t)dt+\ep^{-2}\int_{\R}\zeta_0\
R_1(\bs{s},t,h_1,h_1')w'(t)dt
\end{align*}

It is direct to check, from \eqref{ErrorA0LaplacianXeph} and
\eqref{DefinitionR1} the following estimate on the Lipschitz
character for $G_1(h)$
$$
\|G_1(h_1)-G_1(h_2)\|_{C^{0,\lambda}_{\mu,*}(\R)}\leq C
\ep\|h_1-h_2\|_{C^{2,\lambda}_{\mu,*}(\R)}.
$$

Now, a simple but crucial observation we make is that
\begin{align*}
c_{*}G_1(0)&=\ep^{-2}\int_{\R}\zeta_0\ R_1(\ep s,t,0,0)w'(t)dt
\end{align*}
has the size
\begin{align}
\|G_1(0)\|_{C^{0,\lambda}_{\mu,*}(\R)}\leq
C\ep^{-2}\cdot\ep^{-1}\|R_1\|_{C^{0,\lambda}_{\mu,\sigma}(\R)}\leq
C_2\ep\label{ReducedProblemEq5}
\end{align}
for some universal constant $C_2>0$. Therefore, the entire operator
$\G(h)$ inherits a Lipschitz character in $h$, from those of $G_1$,
$G_2$:
\begin{align}
\|\G(h_1)-\G(h_2)\|_{C^{0,\lambda}_{\mu,*}(\R)}\leq C
\ep\|h_1-h_2\|_{C^{2,\lambda}_{\mu,*}(\R)}.\label{LipschitzG}
\end{align}

Moreover, estimates
\eqref{ReducedProblemEq4}-\eqref{ReducedProblemEq5} imply that $\G$
is such
\begin{align}
\|\G(0)\|_{C^{2,\lambda}_{\mu,*}(\R)}\leq
2C_2\ep\label{ReducedProblemEq6}
\end{align}
Now let $h=T(f)$ be the linear operator defined in
Proposition~\ref{PropLinearJacobiTheory}, and let $\G$ be the
nonlinear operator given in \eqref{ProofThmEq15}. Consider the
Jacobi nonlinear equation~\eqref{ReducedProblemEq1}, but this time
written as a fixed point problem: Find some $h$ such that
\begin{align}
h=T(\G(h)), \quad \|h\|_{C^{2,\lambda}_{2+\alpha,*}(\R)}\leq
\K\ep\label{FixedPointProblem}
\end{align}

Observe that
\begin{align*}
\|T(\G(h))\|_{C^{2,\lambda}_{2+\alpha,*}(\R)}
& \leq C\left(\|\G(h)-\G(0)\|_{C^{0,\lambda}_{2+\alpha,*}(\R)}
+\|\G(0)\|_{C^{0,\lambda}_{2+\alpha,*}(\R)}\right)\\[0.2cm]
& \leq  C\,\ep\left(1+\|h\|_{C^{2,\lambda}_{2+\alpha,*}(\R)}\right)
\end{align*}

where we made use of \eqref{LipschitzG}-\eqref{ReducedProblemEq6}.
Observe also that
$$
\|T(\G(h_1))-T(\G(h_2))\|_{C^{2,\lambda}_{2+\alpha,*}(\R)}\leq
C\|\G(h_1)-\G(h_2)\|_{C^{0,\lambda}_{2+\alpha,*}(\R)}\leq
C\ep\|h_1-h_2\|_{C^{2,\lambda}_{2+\alpha,*}(\R)}
$$

Hence choosing $\K>0$, large enough but independent of $\ep>0$, we
find that if $\ep$ is small, the operator $T\circ\G$ is a
contraction on the ball $\|h\|_{C^{2,\lambda}_{2+\alpha,*}(\R)} \leq
\K\ep$. As a consequence of the Banach's fixed point theorem, obtain
the existence of a unique fixed point of the problem
\eqref{FixedPointProblem}. This finishes the proof of
Proposition~\ref{PropNonlinearJacobiTheory} and consequently, the
proof of our theorem.

\section{Appendix}

This section is mainly oriented in finding expressions for each one
of the terms in the Allen-Cahn equation~\eqref{IntrodEq12} written
in Fermi coordinates, that are suitable for the geometrical study of
this equation. Since they define a local change of variables in a
neighborhood of $\Gamma$, our effort will focus on finding an
equivalent form of~\eqref{IntrodEq12} in these coordinates.

\subsection{Laplacian in Fermi Coordinates: Proof of
Lemma~\ref{LemmaEuclideanLaplacian}}\label{sec:LaplacianoCoordFermiDilatadasTrasladadas}

In order to characterize the Euclidean Laplacian $\Delta_{x,y}$ in
dilated and translated Fermi coordinates, we will follow the scheme
from the appendix in \cite{9}.

For any $\delta>0$ small but fixed, and a curve $\Gamma\subset\R^2$
parameterized by $\gamma\in C^{2}(\R,\R^2)$, let us consider first
the local \textsf{Fermi coordinates} induced by $\Gamma$
\begin{equation*}
X:\R\times (-\delta,\delta)\to \Nn_{\delta}\ ,\quad
X(\bs{s},\bs{t})=\gamma(\bs{s})+\bs{t}\nu(\bs{s})
\end{equation*}
where $\nu(\bs{s})$ denotes the normal vector to the curve $\Gamma$
at the point $\gamma(\bs{s})$.

It can be seen that $X$ defines a local change of variables on the
tubular open neighborhood
$$\Nn_{\delta}:=\{(\bar{x},\bar{y})=\gamma(\bs{s})+\bs{t}\nu(\bs{s})\ /\; \bs{s}\in \R, \;|\boldsymbol{t}|< \delta+\ep\cdot 2c_0|\bs{s}|\}$$
of $\Gamma$, where $c_0>0$ is a fixed number, and
$|\bs{t}|=\dist((\bar{x},\bar{y}),\Gamma)$ for every
$(\bar{x},\bar{y})=X(\bs{s},\bs{t})$. 

Given that $X(\Nn_{\delta})\subset \R^2$ is a 2-dimensional
manifold, we can employ a formula from Differential Geometry that
allow us to compute the Euclidean Laplacian in terms of Fermi
coordinates, for points $(\bar{x},\bar{y})=X(\bs{s},\bs{t})\in
\Nn_{\delta}$ as follows
\begin{equation}
\Delta_{X}=\dfrac{1}{\sqrt{\det(g(\bs{s},\bs{t}))}}\;
\partial_{i}\left(\sqrt{\det(g(\bs{s},\bs{t}))}\cdot
g^{ij}(\bs{s},\bs{t})\ \partial_{j}\right),\quad i,j=\bs{s},\bs{t}
\label{LaplacianoCoordFermiEq1}
\end{equation}
where $g_{ij}(\bs{s},\bs{t})=\
\pdi{\partial_{\bs{i}}X(\bs{s},\bs{t})}{\partial_{\bs{j}}X(\bs{s},\bs{t})}$
corresponds to the $ij$th entry of metric $g$ of $\Gamma$, and we
regard $g^{ij}=(g^{-1})_{i,j}$ as the respective entry for the
inverse of the metric.
Performing explicit calculations, and using the relations between
the tangent and the normal to the curve $\Gamma$, it follows
\begin{equation}
\partial_{\bs{s}}X(\bs{s},\bs{t})=\dot{\gamma}(\bs{s})+\bs{t}\dot{\nu}(\bs{s}),\quad \partial_{\bs{t}}X(\bs{s},\bs{t})=\nu(\bs{s})\;\; \label{LaplacianoCoordFermiEq3}
\end{equation}
And so by \eqref{LaplacianoCoordFermiEq3}, the metric $g$ can be
computed as
\begin{align*}
g_{ss}(\bs{s},\bs{t})&=|\dot{\gamma}(\bs{s})|^2+2\bs{t}\dot{\gamma}(\bs{s})\cdot\dot{\nu}(\bs{s})+\bs{t}^2|\dot{\nu}(\bs{s})|^2
=(1-\bs{t}k(\bs{s}))^2\\
g_{st}(\bs{s},\bs{t})&=g_{ts}(\bs{s},\bs{t})= 0,\quad
g_{tt}(\bs{s},\bs{t})=1 
\end{align*}
Hence, the components of the $g^{-1}$ are
\begin{equation}
g^{ss}(\bs{s},\bs{t})=\dfrac{1}{(1-\bs{t}k(\bs{s}))^2},\;\;
g^{st}(\bs{s},\bs{t})= g^{ts}(\bs{s},\bs{t})=0,\;\;
g^{tt}(\bs{s},\bs{t})=1\label{LaplacianoCoordFermiEq6}
\end{equation}
Replacing formula \eqref{LaplacianoCoordFermiEq1} and using values
obtained in \eqref{LaplacianoCoordFermiEq6}, we get
\begin{equation}
\Delta_{x}=\partial_{\bs{t}\bs{t}}+g^{ss}\partial_{\bs{s}\bs{s}}+\dfrac{1}{\sqrt{\det
g}}
\partial_{\bs{t}}(\sqrt{\det g})\partial_{\bs{t}}
+\dfrac{1}{\sqrt{\det g}}\partial_{\bs{s}}(\sqrt{\det g}\cdot
g^{ss})\partial_{\bs{s}} \label{LaplacianoCoordFermiEq7}
\end{equation}
where $\sqrt{\det g}=1-\bs{t}k(\bs{s})$, and with
\begin{align*}
\frac{1}{\sqrt{\det g}}\partial_{\bs{t}}(\sqrt{\det
g})=\dfrac{-k(\bs{s})}{1-\bs{t}k(\bs{s})},\quad \frac{1}{\sqrt{\det
g}}\partial_{\bs{s}} (\sqrt{\det g}\cdot g^{ss})
=\frac{\bs{t}\dot{k}(\bs{s})}{(1-\bs{t}k(\bs{s}))^3}
\end{align*}

Since $k$ vanishes at infinity, we can make an approximation of this
operator at main order,
\begin{align*}
\dfrac{1}{(1-\bs{t}k)^2}=\underbrace{\left(\sum\limits_{m=0}^{\infty}(\bs{t}k)^{m}\right)^2}_{\bs{t}\
\text{small}}=
[1+\bs{t}k+\bs{t}^2\O(k^2)]^2=[ 1+2\bs{t}k+\bs{t}^2\O(k^2)]\notag\\
\dfrac{1}{(1-\bs{t}k)^3}=\underbrace{\left(\sum\limits_{m=0}^{\infty}(\bs{t}k)^{m}\right)^3}_{\bs{t}\
\text{small}}= [1+\bs{t}k+\bs{t}^2\O(k^2)]^3=[
1+3\bs{t}k+\bs{t}^2\O(k^2)]\notag.
\end{align*}

In this way, we deduce the expansion
\begin{align}
g^{ss}&=1+\bs{t} A_0(\bs{s},\bs{t})\label{LaplacianoCoordFermiEq8}\\
\frac{1}{\sqrt{\det g}}\partial_{\bs{t}}(\sqrt{\det g})&=-k(\bs{s})-\bs{t}k^2(\bs{s})+t^2C_0(\bs{s},\bs{t})\label{LaplacianoCoordFermiEq9}\\
\frac{1}{\sqrt{\det g}}\partial_{\bs{s}} (\sqrt{\det g}\cdot
g^{ss})&=\bs{t}B_0(\bs{s},\bs{t})\label{LaplacianoCoordFermiEq10}
\end{align}
where $A_0(\bs{s},\bs{t})$, $B_0(\bs{s},\bs{t})$ and
$C_0(\bs{s},\bs{t})$ are are the smooth functions described in
\eqref{ErrorA0LaplacianXeph}, \eqref{ErrorB0LaplacianXeph} and
\eqref{ErrorC0LaplacianXeph}.

So using relations \eqref{LaplacianoCoordFermiEq7} to
\eqref{LaplacianoCoordFermiEq10}, we get the Euclidean Laplacian in
Fermi coordinates
\begin{equation}
\Delta_{X}=
\partial_{\bs{t}\bs{t}}+\partial_{\bs{s}\bs{s}}-[k(\bs{s})+\bs{t}k^2(\bs{s})]\partial_{\bs{t}}
+\bs{t} A_0(\bs{s},\bs{t})\partial_{\bs{s}\bs{s}}
+\bs{t}B_0(\bs{s},\bs{t})\partial_{\bs{s}}
+\bs{t}^2C_0(\bs{s},\bs{t})\partial_{\bs{t}}\label{LaplacianoCoordFermiEq14}
\end{equation}

Next, we consider the dilated curve $\Gamma_{\ep}=\ep^{-1}\Gamma$
by $\gamma_{\ep}:s\mapsto \ep^{-1}\gamma(\ep s)$, and we define
associated local \textbf{dilated Fermi coordinates} in $\R^2$ by
\begin{equation*}
X_{\ep}(s,t):=\frac{1}{\ep}X(\ep s,\ep t)= \frac{1}{\ep}\gamma(\ep s)+t \nu(\ep s)
\end{equation*}
on a dilated tubular neighborhood $\ep^{-1}\Nn_{\delta}$ of the
curve $\Gamma_\ep$
\begin{equation*}
\Nn_{\ep}=\left\{(x,y)=X_{\ep}(s,t)\in \R^2/\; s\in \R, \;|t|<
\frac{\delta}{\ep}+2c_0|s|) \right\}
\end{equation*}
where $c_0>0$ is a fixed number, and in such way that $X_{\ep}$
defines a local change of variables. Consequently, scaling formula
\eqref{LaplacianoCoordFermiEq14} yields the expression
\begin{align*}
\Delta_{X_{\ep}}&:=\partial_{tt}+\partial_{ss}-\ep[k(\ep s)+\ep t k^2(\ep s)]\partial_{t}+\ \ep t A_0(\ep s ,\ep t)\partial_{ss}\\
&+\ \ep^2 t B_0(\ep s ,\ep t)\partial_{s}+\ \ep^3 t^2 C_0(\ep s ,\ep t)\partial_{t}
\end{align*}

Setting $z=t-h(\ep s)$, it is possible to compute $\Delta_{x,y}$ in
terms of dilated and translated Fermi coordinates $(s,t)$, as
\begin{align*}
\Delta_{X_{\ep,h}}&=
\partial_{tt}\,+\,\partial_{ss}-2\ep h'(\ep
s)\partial_{st}
-\ep^2 h''(\ep s)\partial_{t}-\ep[k(\ep s)+\ep(t+h(\ep s))k^2(\ep s)]\partial_{t}\\
&+\ep^2 |h'(\ep s)|^2\partial_{tt}+D_{\ep,h}(s,t)
\end{align*}
where
\begin{align*}
D_{\ep,h}(s,t)&:=
\ep(t+h(\ep s))A_0(\ep s,\ep(t+h))[\partial_{ss}v^{*}-2\ep h'(\ep s)\partial_{ts}v^{*}-\ep^2h''(\ep s)\partial_tv^{*}+\ep^2|h'(\ep s)|^2\partial_{tt}v^{*}]\\
&+\ep^2(t+h(\ep s))B_0(\ep s,\ep(t+h))[\partial_{s}v^{*}(s,t)-\ep h'(\ep s)\partial_tv^{*}(s,t)]\\
&+\ep^3(t+h(\ep s))^2C_0(\ep s,\ep(t+h))\partial_{t}v^{*}(s,t)
\end{align*}
thus finishing the proof of Lemma \ref{LemmaEuclideanLaplacian}

\subsection{Proof of
Lemma~\ref{LemmaEuclideanGradients}}\label{sec:GradienteCoordFermiDilatadasTrasladadas}

Analogously to what performed in the last section, our main goal is
finding a characterization for the product
$\ep\nabla_{\bar{x}}a/a\cdot\nabla_x u$ in Fermi coordinates. To
achieve this we will follow a scheme in 3 steps, for which the
analysis is simplified.

Hereinafter, once again we adopt the convention
$a=a(\bar{x},\bar{y})$ and $u=u(\bar{x},\bar{y})$, where
$(\bar{x},\bar{y})$ denotes the non-dilated Euclidean coordinates of
the space.

We had that $X(\bs{s},\bs{t})=\gamma(\bs{s})+\bs{t}\nu(\bs{s})$
provided a local change of variables, implying that 
\begin{align*}
(\bar{x},\bar{y})&=X(X^{-1}(\bar{x},\bar{y})), \quad \text{ for }(\bar{x},\bar{y})\in\Nn \\
\nabla_{\bar{x},\bar{y}}&=\nabla_{\bs{s},\bs{t}}\cdot
\left[D_{\bs{s},\bs{t}}X(X^{-1}(\bar{x},\bar{y}))\right]^{-1}
\end{align*}

Therefore, we obtain that
\begin{align*}
\frac{\nabla_{\bar{x},\bar{y}}a}{a}\nabla_{\bar{x},\bar{y}}
&=\dfrac{1}{(1-\bs{t}k(\bs{s}))^2}\left(\frac{\partial_{\bs{s}}a}{a}\cdot\partial_{\bs{s}}v_u\right)+ \frac{\partial_{\bs{t}}a}{a}\cdot\partial_{\bs{t}}v_u\\[0.2cm]
&=\ \frac{\partial_{\bs{s}}a}{a}\cdot \partial_{\bs{s}}+
\frac{\partial_{\bs{t}}a}{a}\cdot\partial_{\bs{t}}+\bs{t}A_0(\bs{s},\bs{t})\frac{\partial_{\bs{s}}a}{a}\cdot
\partial_{\bs{s}}
\end{align*}
where we have made use of the expansion \eqref{LaplacianoCoordFermiEq8} of entry $g^{\bs{s}\bs{s}}$ of the metric.\\

Further, using the above expression in the Taylor expansion for $\nabla a/a$ around the curve $\Gamma$, we find 
\begin{align*}
\frac{\partial_{\bs{s}}a}{a}(\bs{s},\bs{t})&=\frac{\partial_{\bs{s}}a}{a}(\bs{s},0)+\bs{t}\ \partial_{\bs{t}}\left(\frac{\partial_{\bs{s}} a}{a}\right)(\bs{s},0)+\mathit{O}\left(\bs{t}^2\partial_{\bs{t}\bs{t}}\left(\frac{\partial_t a}{a}\right)\right)\\
\frac{\partial_{\bs{t}}a}{a}(\bs{s},\bs{t})&=
\frac{\partial_{\bs{t}}a}{a}(\bs{s},0)+\bs{t}\left(\frac{\partial_{\bs{t}\bs{t}}a}{a}(\bs{s},0)-
\biggl\lvert\frac{\partial_{\bs{t}}a}{a}(\bs{s},0)\biggr\rvert^2\right)+
\dfrac{\bs{t}^2}{2}\partial_{\bs{t}\bs{t}}\left(\frac{\partial_t
a}{a}(\bs{s},0)\right)
+\mathit{O}\left(\bs{t}^3\partial_{\bs{t}\bs{t}\bs{t}}\left(\frac{\partial_t
a}{a}\right)\right)
\end{align*}

Replacing these expansions in the above equation we obtain
\begin{align}
\dfrac{\nabla_{X}a}{a}\nabla_{X}
&=\frac{\partial_{\bs{s}}a}{a}(\bs{s},0)\partial_{\bs{s}}+
\left[\frac{\partial_{\bs{t}}a}{a}(\bs{s},0)+\bs{t}
\left(\frac{\partial_{\bs{tt}}a}{a}(\bs{s},0)-\biggl\lvert\frac{\partial_{\bs{t}}a}{a}(\bs{s},0)\biggr\rvert^2\right)\right]
\partial_{\bs{t}}\notag\\[0.2cm]
&+\bs{t}D_0(\bs{s},\bs{t})\partial_{\bs{s}}
+\bs{t}^2F_0(\bs{s},\bs{t})\partial_{\bs{t}}\label{GradienteCoordFermiEq10}
\end{align}
for which
\begin{align*}
D_0(\bs{s},\bs{t})&=\partial_{\bs{t}}\left(\frac{\partial_{\bs{s}}a}{a}\right)(\bs{s},0)
+\mathit{O}\left(\bs{t}\partial_{\bs{t}\bs{t}}\left(\frac{\partial_t a}{a}\right)\right)
+A_0(\bs{s},\bs{t})\frac{\partial_{\bs{s}}a}{a}(\bs{s},\bs{t})\\
F_0(\bs{s},\bs{t})&=\dfrac{1}{2}\partial_{\bs{t}\bs{t}}\left(\frac{\partial_t
a}{a}(\bs{s},0)\right)
+\mathit{O}\left(\bs{t}\partial_{\bs{t}\bs{t}\bs{t}}\left(\frac{\partial_t
a}{a}\right)\right)
\end{align*}
\noindent and $A_0(\bs{s},\bs{t})$ given by
\eqref{ErrorA0LaplacianXeph}.

Let us recall the convention $(\bar{x},\bar{y}):=(\ep x, \ep y),
\;(\bs{s},\bs{t}):=(\ep s, \ep t)$ and observe that
\begin{equation*}
\nabla_{x,y}=\ep\nabla_{\bar{x},\bar{y}}
\end{equation*}
Directly from a this scaling and formula
\eqref{GradienteCoordFermiEq10} we find that
\begin{align*} \ep
\dfrac{\nabla_{X}a}{a}(\bar{x},\bar{y})\nabla_{X_{\ep}}&=
\ep^2\Biggl\{\frac{\partial_{\bs{s}}a}{a}(\bs{s},0)\partial_{\bs{s}}+
\left[\frac{\partial_{\bs{t}}a}{a}(\bs{s},0)+\bs{t}
\left(\frac{\partial_{\bs{tt}}a}{a}(\bs{s},0)-\biggl\lvert\frac{\partial_{\bs{t}}a}{a}(\bs{s},0)\biggr\rvert^2\right)\right]
\partial_{\bs{t}}\Biggr.\\
&+ \Biggl.\bs{t}D_0(\bs{s},\bs{t})\partial_{\bs{s}}+\bs{t}^2F_0(\bs{s},\bs{t}) \partial_{\bs{t}}\Biggr\}
\end{align*}
which can be written in terms of dilated Fermi coordinates $(s,t)$
\begin{align*}
\ep \dfrac{\nabla_{X}a}{a}(\bar{x},\bar{y})\nabla_{X_{\ep}}&=
\ep\frac{\partial_{\bs{s}}a}{a}(\ep s,0)\partial_{s}\\
&+\ep \left[\frac{\partial_{\bs{t}}a}{a}(\ep s,0)+\ep t
\left(\frac{\partial_{\bs{tt}}a}{a}(\ep s,0)
-\biggl\lvert\frac{\partial_{\bs{t}}a}{a}(\ep s,0)\biggr\rvert^2\right)\right]
\partial_{t}+E_{\ep}(s,t)
\end{align*}
where the derivatives of function $a$ are with respect to shrink
Fermi variables $(\bs{s},\bs{t})$, and with
\begin{align*}
E_{\ep}(s,t):=\ep^2 t D_0(\ep s,\ep t)\cdot\partial_{s}+\ep^3
t^2F_0(\ep s,\ep t)\cdot
\partial_{t}
\end{align*}
for which
\begin{align*}
D_0(\ep s ,\ep
t)&=\partial_{\bs{t}}\left(\frac{\partial_{\bs{s}}a}{a}\right)(\ep
s,0) +\ep \mathit{O}\left(t\partial_{\bs{t}\bs{t}}\left[\frac{\partial_t
a}{a}\right]\right)
+A_0(\ep s,\ep t)\frac{\partial_{\bs{s}}a}{a}(\ep s,\ep t)\\
F_0(\ep s ,\ep t)&=\dfrac{1}{2}\partial_{\bs{t}\bs{t}}\left[\frac{\partial_t
a}{a}(\ep s,0)\right]+\ep
\mathit{O}\left(t\partial_{\bs{t}\bs{t}\bs{t}}\left(\frac{\partial_t a}{a}\right)\right)
\end{align*}
\noindent and the function $A_0(\ep s,\ep t)$ given by
\eqref{ErrorA0LaplacianXeph}

Next, we observe that for $z=t-h(\ep s)$, this product in dilated
and translated Fermi coordinates amounts to

\begin{align*}
\ep\dfrac{\nabla_{X}a}{a}(\ep x,\ep y)\nabla_{X_{\ep,h}}&=
\ep\frac{\partial_{\bs{s}}a}{a}(\ep s,0)[\partial_{s}-\ep h'(\ep s) \partial_{t}]\\
&+\ep \left[\frac{\partial_{\bs{t}}a}{a}(\ep s,0)+\ep (t+h(\ep s))
\left(\frac{\partial_{\bs{tt}}a}{a}(\ep
s,0)-\biggl\lvert\frac{\partial_{\bs{t}}a}{a}(\ep
s,0)\biggr\rvert^2\right)\right]\partial_{t}\\
&+E_{\ep,h}(\ep s,t)
\end{align*}
where $E_{\ep,h}$ is a small operator for $\ep>0$ small enough, and
of the form
\begin{align*}
E_{\ep,h}(\ep s,t)&:=\ep^2 (t+h(\ep s)) D_0(\ep s,\ep(t+h(\ep
s)))[\partial_{s}v_u^{*}(s,t)-\ep h'(\ep s) \partial_{t}v_u^{*}(s,t)]\\[0.2cm]
&+\ep^3 (t+h(\ep s))^2F_0(\ep s,\ep (t+h(\ep
s)))\partial_{t}v^{*}_u(s,t)
\end{align*}
such that the following functions are smooth, and these relations
can be derived.
\begin{align*}
D_0(\ep s ,\ep
(t+h))&=\partial_{\bs{t}}\left[\frac{\partial_{\bs{s}}a}{a}\right](\ep
s,0)
+\ep \mathit{O}\left((t+h(\ep s))\partial_{\bs{t}\bs{t}}\left[\frac{\partial_t a}{a}\right]\right)\\
&+A_0(\ep s,\ep (t+h))\frac{\partial_{\bs{s}}a}{a}(\ep s,\ep (t+h))\\
F_0(\ep s ,\ep
(t+h))&=\dfrac{1}{2}\partial_{\bs{t}\bs{t}}\left[\frac{\partial_t
a}{a}\right](\ep s,0)+\ep \mathit{O}\left((t+h(\ep
s))\partial_{\bs{t}\bs{t}\bs{t}}\left[\frac{\partial_t
a}{a}\right]\right)
\end{align*}
this completes the proof of
Lemma~\ref{LemmaEuclideanGradients}.\hfill$\square$

\bigskip
{\textbf{Acknowledgements}: 
This work has been partially supported by the research grant Fondecyt 1110181.}



\end{document}